\documentclass[10pt,a4paper,reqno]{amsart} 

\raggedbottom

\setlength{\textheight}{25.5cm}
\setlength{\textwidth}{16.5cm}
\setlength{\topmargin}{-1cm}
\setlength{\oddsidemargin}{-0.10cm}
\setlength{\evensidemargin}{-0.1cm}
\setlength{\parindent}{0.5cm}

\usepackage{amsmath,mathrsfs,ragged2e}
\usepackage{color}
\usepackage{amsfonts}
\usepackage{amscd}
\usepackage{latexsym}
\usepackage{amssymb}  
\usepackage{amsthm}
\usepackage{hyperref}
\allowdisplaybreaks
\usepackage{graphicx, graphics}
\usepackage{subfig,float}
\usepackage{lipsum}
\usepackage{epstopdf}

\newtheorem{theorem}{Theorem}[section]
\newtheorem{lemma}{Lemma}[section]

\newtheorem{example}{Example}[section]
\newtheorem{remark}{Remark}[section]

\numberwithin{equation}{section}
\numberwithin{theorem}{section}
\numberwithin{lemma}{section}

\newcommand{\ut}{u(\cdot,t)}
\newcommand{\vt}{v(\cdot,t)}
\newcommand{\wt}{w(\cdot,t)}
\newcommand{\zt}{z(\cdot,t)}

\newcommand{\us}{u(\cdot,s)}
\newcommand{\vs}{v(\cdot,s)}
\newcommand{\ws}{w(\cdot,s)}
\newcommand{\zs}{z(\cdot,s)}

\newcommand{\gu}{{\nabla u}}
\newcommand{\gv}{{\nabla v}}
\newcommand{\gw}{{\nabla w}}
\newcommand{\gz}{{\nabla z}}

\newcommand{\gzt}{{\nabla \zt}}

\newcommand{\lv}{\Delta v}
\newcommand{\lw}{\Delta w}

\newcommand{\mgv}{|\gv|}
\newcommand{\mgw}{|\gw|}
\newcommand{\mgz}{|\gz|}

\newcommand{\mlv}{|\Delta v|}
\newcommand{\mlw}{|\Delta w|}
\newcommand{\mlz}{|\Delta z|}

\newcommand{\nut}{\big\|\ut\big\|}
\newcommand{\nvt}{\big\|\vt\big\|}
\newcommand{\nwt}{\big\|\wt\big\|}
\newcommand{\nzt}{\big\|\zt\big\|}

\newcommand{\nru}{\big\|u\big\|}

\newcommand{\nv}{\big\|v\big\|}
\newcommand{\ngv}{\big\|\gv\big\|}
\newcommand{\nlv}{\big\|\lv\big\|}

\newcommand{\nw}{\big\|w\big\|}
\newcommand{\ngw}{\big\|\gw\big\|}
\newcommand{\nlw}{\big\|\lw\big\|}

\newcommand{\ngzt}{\big\|\gzt\big\|}
\newcommand{\nlz}{\big\|\lz\big\|}

\newcommand{\momega}{|\Omega|}
\newcommand{\ui}{u_0}
\newcommand{\vi}{v_0}
\newcommand{\wi}{w_0}
\newcommand{\zi}{z_0}

\newcommand{\muo}{\mu_1}
\newcommand{\mut}{\mu_2}
\newcommand{\muth}{\mu_3}

\newcommand{\ust}{u^*}
\newcommand{\vst}{v^*}
\newcommand{\wst}{w^*}
\newcommand{\zst}{z^*}

\newcommand{\ue}{u_e}
\newcommand{\ve}{v_e}
\newcommand{\we}{w_e}
\newcommand{\ze}{z_e}

\newcommand{\ub}{\bar{u}}
\newcommand{\vb}{\bar{v}}
\newcommand{\wb}{\bar{w}}
\newcommand{\zb}{\bar{z}}

\newcommand{\uh}{\hat{u}}
\newcommand{\vh}{\hat{v}}
\newcommand{\wh}{\hat{w}}
\newcommand{\zh}{\hat{z}}


\newcommand{\rsn}{\mathbb{R}^n}

\newcommand{\lis}{{\mathcal{L}^{\infty}}(\Omega)}
\newcommand{\lisloc}{\mathcal{L}^{\infty}_{loc}\left(\left.\left[0,\tmax\right.\right);\wsq\right)}
\newcommand{\los}{{\mathcal{L}^{1}}(\Omega)}
\newcommand{\lts}{{\mathcal{L}^{2}}(\Omega)}

\newcommand{\lfs}{{\mathcal{L}^{4}}(\Omega)}
\newcommand{\ltps}{{\mathcal{L}^{\frac{2}{p}}}(\Omega)}
\newcommand{\lps}{{\mathcal{L}^{p}}(\Omega)}
\newcommand{\lms}{{\mathcal{L}^{l}}(\Omega)}

\newcommand{\lqs}{{\mathcal{L}^{q}}(\Omega)}
\newcommand{\lrs}{{\mathcal{L}^{r}}(\Omega)}

\newcommand{\wsq}{{\mathcal{W}^{1,q}}(\Omega)}
\newcommand{\wsin}{{\mathcal{W}^{1,\infty}}(\Omega)}

\newcommand{\cso}{ {\mathcal{C}^{0}}(\overline{\Omega})}
\newcommand{\csotm}{\mathcal{ C}^{0}\left(\overline{\Omega}\times\left.\left[0,\tmax\right.\right)\right)}
\newcommand{\cts}{ {\mathcal{C}^{2}}(\overline{\Omega})}
\newcommand{\cstotm}{\mathcal{C}^{2,1}\left(\overline{\Omega}\times\left(0,\tmax\right)\right)}

\newcommand{\wsqob}{{\mathcal{W}^{1,q}}(\overline{\Omega})}
\newcommand{\donmeg}{|\Omega|}
\newcommand{\lros}{{\mathcal{L}^{rl}}(\Omega)}

\newcommand{\tmax}{T_{\mathrm{max}}}
\newcommand{\tin}{t_0}

\newcommand{\intts}{\int^t_{s_0}}
\newcommand{\intti}{\int^t_{\tin}}
\newcommand{\intT}{\int_{s_0}^T}

\newcommand{\ints}{\int_{\Omega}}
\newcommand{\intau}{\int_t^{t+\tau}}
\newcommand{\ds}{\mathrm{d}s}
\newcommand{\dt}{\frac{\mathrm{d}}{\mathrm{d}t}}

\newcommand{\vqmo}{v^{p-1}}
\newcommand{\vqpo}{v^{p+1}}
\newcommand{\fracq}{\frac{1}{p}}
\newcommand{\vq}{v^p}
\newcommand{\vqt}{v^{p-2}}
\newcommand{\lz}{\Delta z}
\newcommand{\fqpoq}{\frac{p+1}{p}}
\newcommand{\kmq}{\delta^{-p}}
\newcommand{\ctqpo}{\chi_2^{p+1}}
\newcommand{\mlzqpo}{|\lz|^{p+1}}
\newcommand{\afqpo}{a_4^{p+1}}
\newcommand{\mutqpo}{\mut^{p+1}}
\newcommand{\wqpo}{w^{p+1}}
\newcommand{\eqts}{e^{-(p+1)(t-s)}}
\newcommand{\xqpo}{\xi^{p+1}}

\begin{document}

\title[Existence and stability of a predator-prey chemo-alarm-taxis system]{The simultaneous effect of chemotaxis and alarm-taxis on the global existence and stability of a predator-prey system}

%
%
%
%
%
%
%
%
%
%
%

%
%





\author{Gnanasekaran Shanmugasundaram}
\address[GS]{Department of Mathematics, National Institute of Technology Tiruchirappalli, Tamilnadu 620015, India}
\curraddr{}
\email{sekaran@nitt.edu}
\thanks{}

\author{Jitraj Saha$^*$}
\address[JS]{Department of Mathematics, National Institute of Technology Tiruchirappalli, Tamilnadu 620015, India}
\curraddr{}
\email{jitraj@nitt.edu}
\thanks{$^*$Corresponding author}

\author{Rafael D\'iaz Fuentes}
\address[RDF]{Department of Mathematics and Informatics, University of Cagliari, Via Ospedale 72, Cagliari 09124, Italy}
\curraddr{}
\email{rafael.diazfuentes@unica.it}
\thanks{}

\subjclass[2020]{35A01; 35A09; 35B40; 92C17; 37N25; 65N06}

\begin{abstract}
This study examines a fully parabolic predator–prey chemo-alarm-taxis system under homogeneous Neumann boundary conditions in a bounded domain $\Omega \subset \mathbb{R}^n$ with a smooth boundary $\partial\Omega$. Under specific parameter conditions, it is shown that the system admits a unique, globally bounded classical solution. The convergence of the solution is established through the construction of an appropriate Lyapunov functional. In addition, numerical simulations are presented to validate the asymptotic behaviour of the solution. The results highlight the significant role of chemotaxis and alarm-taxis coefficients in determining the existence and stability of predator–prey  models, as discussed in the literature.
\end{abstract}


\maketitle

\section{Introduction and motivation}
\justifying
\quad 
Burglar alarm calls \cite{mdburkenroad} act as a key anti-predation strategy, whereby prey emit signals that attract secondary predators, potentially deterring primary predators \cite{dpchivers, gklump}. This behaviour is observed in marine systems, such as dinoflagellates employing bioluminescence in response to copepod disturbances, which may draw fish as secondary predators \cite{hyjin}. Chemotaxis, on the other hand, refers to the movement of microorganisms in response to chemical gradients, with applications spanning ecology, immunology, biology and medicine.

In this context, we formulate the chemo-alarm-taxis system, in which the primary predator $v$ is attracted to the prey $w$ through a common chemical signal $z$, while the prey moves away from the primary predator in response to the same chemical. At the same time, the secondary predator is drawn towards the combined density of the primary predator and the prey owing to the alarm call emitted by the prey. This interaction leads the secondary predator to prey upon the primary predator, thereby providing the prey with a potential defence mechanism against predation. To capture these ecological dynamics, we consider a system of coupled, nonlinear parabolic partial differential equations describing the interactions between two predators and one prey in a chemo-alarm-taxis framework. The system is given by
\begin{align}
	\left\{
	\begin{array}{llll}
		&u_t=d_1\Delta u-\chi_1\nabla\cdot\left(u\nabla(vw)\right)+\mu_1 u(1- u+a_1v+a_2w), &x\in\Omega,\, t>0,\\
		&v_t=d_2\Delta v-\chi_2\nabla\cdot\left(v\nabla z\right)+\mu_2 v(1-v-a_3u+a_4w), &x\in\Omega,\, t>0,\\
		&w_t=d_3\Delta w+\xi\nabla\cdot\left(w\nabla z\right)+\mu_3 w(1-w-a_5u-a_6v), &x\in\Omega,\, t>0,\\
		&z_{t}=d_4\Delta z+\alpha v+\beta w-\gamma z, &x\in\Omega,\, t>0,\\
		&\frac{\partial u}{\partial\nu}=\frac{\partial v}{\partial\nu}=\frac{\partial w}{\partial\nu}=\frac{\partial z}{\partial\nu}=0, &x\in\partial\Omega, \, t>0,\\
		&u(x,0)=u_0, \quad v(x,0)=v_0, \quad w(x,0)=w_0, \quad z(x,0)=z_{0\qquad}&x\in\Omega,
	\end{array}
	\right.\label{1.1}
\end{align}
in a bounded domain $\Omega \subset \mathbb{R}^n$ with smooth boundary $\partial\Omega$. Here, $\nu$ denotes the unit outward normal on $\partial\Omega$. The unknown functions $u=u(x,t)$ and $v=v(x,t)$ describe the population densities of the secondary predator and the primary predator, respectively. In addition, $w=w(x,t)$ represents the prey population, while $z=z(x,t)$ denotes the concentration of the chemical attractant produced by the prey. The parameters $d_1, d_2, d_3, d_4, \chi_1, \chi_2, \xi, \alpha, \beta, \gamma, \mu_1, \mu_2, \mu_3$ and $a_i \ (i=1,2,3,4,5,6)$ are all positive constants and the initial data $u_0, v_0, w_0,$ and $z_0$ are nonnegative functions. The constants $d_1, d_2, d_3,$ and $d_4$ are the diffusion coefficients, representing the natural dispersive forces governing the movement of the predators, prey and chemical concentrations, respectively. The parameter $\chi_1$ is the alarm-taxis coefficient, while $\chi_2$ and $\xi$ are the chemotactic coefficients. The term $-\chi_1 \nabla \cdot (u \nabla (vw))$ represents alarm-taxis, namely the directed movement of the secondary predator ($u$) towards the gradient of the combined predator–prey density ($\nabla (vw)$) in response to the alarm signal. Similarly, the term $-\chi_2 \nabla \cdot (v \nabla z)$ describes chemo-attraction, that is, the directed movement of the primary predator ($v$) towards the gradient of the chemical concentration ($\nabla z$). By contrast, the term $+\xi \nabla \cdot (w \nabla z)$ denotes chemo-repulsion, meaning the directed movement of the prey ($w$) away from the gradient of the chemical concentration ($\nabla z$). The constants $\mu_1, \mu_2, \mu_3$ are the logistic growth coefficients, while $a_1, a_2, a_3, a_4, a_5, a_6$ represent the interspecific interactions among the three species. The parameters $\alpha, \beta$ and $\gamma$ correspond to the production and decay rates of the common chemical. Furthermore, we assume that the initial data $u_0, v_0, w_0,$ and $z_0$ satisfy
\begin{align}
	\left\{
	\begin{array}{llll}
		&\ui, \vi, \wi \in\cso, &\text{with} \quad \ui, \vi, \wi \geq 0\quad\text{in}\,\: \Omega,\\
		& \zi \in\wsqob,& \text{for some}\,\, q > n,\quad \text{with} \quad  \zi \geq 0\quad\text{in}\,\: \Omega.
	\end{array}
	\right.\label{1.2}
\end{align}	
The foundational model for chemotaxis was first proposed by Keller and Segel in 1970 \cite{keller}. Since then, the Keller–Segel model and its various modifications have been extensively investigated by numerous researchers. In this context, considerable attention has been devoted to two-species–one-chemical systems \cite{klin2017, mmizukami2017, masaaki2018, mmizukami2020, tello2012, lwang2020, qzhang2018} and two-species–two-chemical systems with a competing source \cite{tblack2017, ytao2015, lwang20202, qzhang2017, pzheng2017}. Among these, the predator–prey model, an ecologically significant extension of the two-species competition framework, has been widely explored. Research on predator–prey systems with one-chemical chemotaxis includes \cite{sfu2020, yma2022, lmiao2021}, while investigations into predator–prey systems with two-chemical chemotaxis can be found in \cite{gli2020, dqi2022, sqiu2022}. For further developments and extensions of predator–prey models, see \cite{gnanasekaran2023, gnanasekaran2022}.

Motivated by the ecological significance of alarm calls, the alarm-taxis model was first introduced by Haskell and Bell \cite{echaskell}
\begin{align*}
	\left\{
	\begin{array}{llll}
		&u_t=\eta du_{xx}+f(u,v,w),\\
		&v_t=(dv_x-\xi vu_x)_x+g(u,v,w),\\
		&w_t=(w_x-\chi w\phi_x(u,v))_x+h(u,v,w).
	\end{array}
	\right.
\end{align*}
Herein, $u, v,$ and $w$ denote the resource or prey (e.g. dinoflagellates), the primary predator (e.g. copepods), and the secondary predator (e.g. fish), respectively. Under zero-flux boundary conditions, and for certain initial data (details of which may be omitted in subsequent discussion), the authors demonstrated the global existence of positive, bounded classical solutions in one-dimensional space. They also established the existence of non-constant equilibrium solutions and analysed their stability. In addition, numerical simulations were carried out to illustrate the emergence of spatial patterns in this model, emphasising the adaptive advantage conferred by a signalling mechanism consistent with the alarm hypothesis.

Fuest and Lankeit \cite{mfuest} considered the following system
\begin{align*}
	\left\{
	\begin{array}{llll}
		&u_t=d_1\Delta u+u(\lambda_1-\mu_1u-a_1v-a_2w),\\
		&v_t=d_2\Delta v-\xi\nabla\cdot\left(v\nabla u\right)+v(\lambda_2-\mu_2u+b_1u-a_3w),\\
		&w_t=d_3\Delta w-\chi\nabla\cdot\left(w\nabla(uv)\right)+w(\lambda_3-\mu_3w+b_2u+b_3v),
	\end{array}
	\right.
\end{align*}
with homogeneous Neumann boundary conditions, the system models the interactions among the prey $u$, the predator $v$, and the superpredator $w$, the latter preying on both of the other populations. The authors established the existence of a global classical solution in the case of pure alarm-taxis, i.e. when $\xi = 0$. For $\xi > 0$, they investigated the existence of global generalised solutions.

Li and Wang \cite{sli} considered the following system
\begin{align*}
	\left\{
	\begin{array}{llll}
		&u_t=d_1\Delta u+r_1 u(1-u)-b_1uv-b_3uw,\\
		&v_t=d_2\Delta v-\xi\nabla\cdot\left(v\nabla u\right)+r_2v(1-v)+uv-b_2vw,\\
		&w_t=d_3\Delta w-\chi\nabla\cdot\left(w\nabla(uv)\right)+r_3w(1-w^\sigma)+vw+uw,
	\end{array}
	\right.
\end{align*}
where $u, v,$ and $w$ denote the densities of the prey, the primary predator, and the secondary predator, respectively. By applying semi-group theory, the authors established the boundedness of classical solutions in two spatial dimensions. They replaced the usual logistic term $r_3w(1 - w)$ with $r_3w(1 - w^\sigma)$ in order to investigate how large the intra-species competition exponent must be to guarantee the global boundedness of solutions. Furthermore, asymptotic stability was obtained through the construction of a suitable Lyapunov functional.

\quad Jin et al. \cite{hyjin} studied the global existence of classical solutions to the following system by employing the Neumann heat semigroup technique in two spatial dimensions
\begin{align*}
	\left\{
	\begin{array}{llll}
		&u_t=d_1\Delta u+\mu_1 u(1-u)-b_1uv-b_3\frac{uw}{u+w},\\
		&v_t=d_2\Delta v-\xi\nabla\cdot\left(v\nabla u\right)+\mu_2v(1-v)+uv-b_2vw,\\
		&w_t=d_3\Delta w-\chi\nabla\cdot\left(w\nabla(uv)\right)+\mu_3w(1-w)+vw+c_3\frac{uw}{u+w}.
	\end{array}
	\right.
\end{align*}
Additionally, the authors established the global asymptotic stability for the cases where $b_3=c_3=0$ and $b_3>0, c_3>0$.

\quad Zhang et al. \cite{yzhang} studied the following food chain model incorporating alarm-taxis and logistic growth
\begin{align*}
	\left\{
	\begin{array}{llll}
		&u_t=d_1\Delta u+u(1-u)-b_1uv,\\
		&v_t=d_2\Delta v-\xi\nabla\cdot\left(v\nabla u\right)+\theta_1v(1-v)+uv-b_2vw,\\
		&w_t=d_3\Delta w-\chi\nabla\cdot\left(w\nabla(uv)\right)+\theta_2w(1-w)+vw.
	\end{array}
	\right.
\end{align*}
The authors showed that, for sufficiently large values of $\theta_1$ and $\theta_2$, the system admits a globally bounded classical solution when $n \geq 3$. Moreover, by constructing an appropriate Lyapunov functional, they demonstrated that, under certain parameter conditions, the system converges either to a coexistence state or to a semi-coexistence state as $t \to \infty$.

Zhang and Chen \cite{qzhang} studied the global existence of the following system
\begin{align*}
	\left\{
	\begin{array}{llll}
		&u_t=d_1\Delta u+u\left(1-u-\frac{av}{v+\rho}\right),\\
		&v_t=d_2\Delta v+v\left(\frac{bu}{v+\rho}-\alpha-\frac{cw}{w+\sigma}\right),\\
		&w_t=d_3\Delta w-\chi\nabla\cdot\left(w\nabla(uv)\right)+w\left(\frac{mv}{w+\sigma}-\beta\right),
	\end{array}
	\right.
\end{align*}
for $n \geq 1$ with sufficiently smooth initial data. The authors analysed the large-time behaviour of the system under certain conditions and derived convergence rates. In addition, numerical simulations were carried out to validate the analytical results.

To the best of our knowledge, this is the first work to rigorously establish the global existence, boundedness, and convergence of solutions in a predator–prey system incorporating both chemotaxis and alarm-taxis. Motivated by the existing literature, our primary objective is to demonstrate the existence of a global classical solution to system \eqref{1.1}, which remains uniformly bounded in $\Omega \times (0, \infty)$ for $n \geq 1$. Furthermore, we establish convergence of the solution by constructing a suitable Lyapunov functional. Finally, we perform numerical simulations to confirm the asymptotic behaviour of solutions in both two and three spatial dimensions.

The remainder of this article is organised as follows. Section 2 introduces preliminary lemmas and establishes the local existence of the classical solution. Section 3 is devoted to proving the global existence and boundedness of classical solutions for system \eqref{1.1}. In Section 4, we carry out a stability analysis, while Section 5 presents numerical validation and results. The paper concludes with a summary in Section 6. Our main result on global existence is the following.

\begin{theorem}[Global existence of solutions]\label{t.1}
	Suppose that $\Omega \subset\rsn (n\geq 1)$, is {{a bounded domain}} with smooth boundary. Then there exists $\mu>0$, such that if  $\min\{\mu_2, \mu_3\}>\mu$ and for any nonnegative initial data $(\ui, \vi, \wi, \zi)$ satisfying \eqref{1.2} for some $q>\max\{2, n\}$, the system \eqref{1.1} possesses a unique classical solution $(u, v, w, z)$ which is uniformly bounded in the sense that
	\begin{align*}
		\nut_{\lis}+\nvt_{\wsin}+\nwt_{\wsin}+\nzt_{\wsq}\leq C, \quad \forall\, t>0,
	\end{align*}
	where the constant $C>0$.
\end{theorem}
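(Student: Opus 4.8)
The plan is to argue by contradiction through the extensibility criterion from Section~2: there is a maximal existence time $\tmax\in(0,\infty]$, and if $\tmax<\infty$ then at least one of $\nut_{\lis}$, $\nvt_{\wsin}$, $\nwt_{\wsin}$, $\nzt_{\wsq}$ blows up as $t\uparrow\tmax$; nonnegativity of $u,v,w,z$ on $(0,\tmax)$ follows from the maximum principle, since the reaction terms are quasi-positive and the source $\alpha v+\beta w$ of the $z$-equation is nonnegative. The strategy is to rule out blow-up in the order dictated by the coupling: the pair $(v,w)$ drives $z$, the component $u$ is slaved to $vw$ through the alarm gradient $\nabla(vw)$, and $u$ feeds back into $v,w$ only via the sign-favourable terms $-a_3u$, $-a_5u$. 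As a first layer I would record the mass bounds: integrating the $w$-equation and discarding $-a_5u-a_6v\le0$, the logistic structure gives $\dt\ints w\leq\muth\ints w-\tfrac{\muth}{\momega}\big(\ints w\big)^2$, hence $\ints w\leq C$; an analogous estimate, once the cross term $a_4vw$ is controlled, bounds $\ints v$, then $\ints u$ follows from the $u$-equation, and $\ints z$ from its own equation since the source is $L^1$-bounded and $-\gamma z$ is dissipative.

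The core is the coupled $L^p$-estimate for $(v,w)$. Testing the $v$- and $w$-equations with $v^{p-1}$ and $w^{p-1}$ and summing, the taxis contributions, after integration by parts and the substitution $\Delta z=d_4^{-1}(z_t-\alpha v-\beta w+\gamma z)$ from the fourth equation, produce positive multiples of $\ints\vqpo$ and $\ints\wqpo$ of the same order as the logistic dissipation $-\mut\ints\vqpo$, $-\muth\ints\wqpo$. This is exactly where the hypothesis $\min\{\mut,\muth\}>\mu$ is used: choosing the threshold $\mu$ in terms of $\chi_2,\xi,\alpha,\beta,d_4$ makes the logistic terms dominate, and after coupling with a $\ints z^{p+1}$ term to absorb the remaining $z_t$ and $\gamma z$ contributions one obtains a Gronwall inequality for $\ints(\vqpo+\wqpo)$, hence a time-uniform $L^p$-bound for every finite $p$. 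A Moser--Alikakos iteration (or the standard semigroup bootstrap) upgrades this to $\nvt_{\lis}+\nwt_{\lis}\leq C$.

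With $v,w$ bounded in $\lis$, the $z$-equation is a linear inhomogeneous heat equation with bounded source, so Neumann heat-semigroup smoothing yields $\nzt_{\wsq}\leq C$ and a bound on $\ngzt_{\lis}$. Feeding this back, the drift $\nabla z$ in the $v$- and $w$-equations is bounded, and parabolic estimates give $\nvt_{\wsin}+\nwt_{\wsin}\leq C$, so that $\nabla(vw)=w\gv+v\gw$ is uniformly bounded. Finally the $u$-equation reads as a scalar chemotaxis equation driven by the now $W^{1,\infty}$-bounded signal $vw$, whose self-damping $-\muo u^2$ dominates the bounded reaction terms $a_1v,a_2w$; a last Moser iteration gives $\nut_{\lis}\leq C$. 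All four quantities in the extensibility criterion are then bounded uniformly, forcing $\tmax=\infty$ and the asserted estimate.

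The step I expect to be the main obstacle is the coupled $L^p$-estimate: because the system is fully parabolic, $z$ is not recovered from $v,w$ by an elliptic identity, so $\Delta z$ (equivalently $\nabla z$) must be controlled through the evolution of $z$ itself, creating a feedback loop between the $L^p$-norms of $v,w$ and the regularity of $z$. Closing this loop is precisely the role of the largeness of $\mut,\muth$; pinning down an admissible threshold $\mu$, and checking that the interaction of the attractive ($\chi_2$) and repulsive ($\xi$) taxis does not destroy the favourable sign structure, is the delicate part of the argument.
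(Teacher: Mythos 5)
Your overall architecture coincides with the paper's: local existence plus the extensibility criterion (Lemma~\ref{l.2.4}), $L^1$ bounds from the logistic structure (Lemma~\ref{l.2.51}), a coupled $L^p$ estimate for $(v,w)$ in which the largeness of $\min\{\mu_2,\mu_3\}$ is decisive (Lemma~\ref{l.3.1}), a semigroup bootstrap to $\|v\|_{\lis}$, $\|w\|_{\lis}$, $\|z\|_{\wsq}$ and then to $W^{1,\infty}$ bounds for $v,w$, and only at the end the $L^\infty$ bound for $u$, whose taxis term $\nabla(vw)=w\nabla v+v\nabla w$ is by then under control (Lemmas~\ref{l.2.6}--\ref{l.3.2}). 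One detail: the paper obtains the $L^1$ bound by adding the three equations with weights $\zeta_1,\zeta_2$ chosen so that every cross term $\ints uv$, $\ints uw$, $\ints vw$ carries a nonpositive coefficient; your sequential scheme ($w$, then $v$, then $u$) stalls exactly on the $+\mu_2a_4\ints vw$ term you mention, and the weighted sum is the clean way around it.

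The genuine gap is at the step you yourself flag as the main obstacle. After testing with $v^{p-1}$, the dangerous term is $-\tfrac{\chi_2(p-1)}{p}\ints v^{p}\Delta z$, and your proposal to substitute $\Delta z=d_4^{-1}(z_t-\alpha v-\beta w+\gamma z)$ leaves the term $\ints v^{p}z_t$, which is \emph{not} absorbed by adjoining $\ints z^{p+1}$ to the functional: differentiating $\ints z^{p+1}$ produces $\ints z^{p}z_t$, not $\ints v^{p}z_t$. Handling $\ints v^{p}z_t$ would force an integration by parts in time, which reintroduces $v_t$ and hence the full $v$-equation (taxis term included), and the loop does not close as described. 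The paper closes it by a different device: it applies the variation-of-constants formula to the differential inequality for $\tfrac1p\ints v^{p}$, producing exponentially weighted time integrals of $\ints v^{p+1}$, $\ints|\Delta z|^{p+1}$ and $\ints w^{p+1}$, and then invokes maximal Sobolev regularity for the fourth equation (Lemma~\ref{l.2.3}) to bound $\int_{s_0}^{t}e^{-(p+1)(t-s)}\ints|\Delta z|^{p+1}$ by $C_p\int_{s_0}^{t}e^{-(p+1)(t-s)}\ints(\alpha v+\beta w)^{p+1}$ plus data terms. This turns the feedback between $(v,w)$ and the regularity of $z$ into a linear absorption problem with an explicit constant $C_p$, which is precisely what allows the threshold $\mu=\mu(p,\chi_2,\xi,\alpha,\beta,a_4)$ to be written down and the resulting terms to be absorbed by $-\mu_2\ints v^{p+1}-\mu_3\ints w^{p+1}$ (and symmetrically for $w$ with $\xi$). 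Without maximal regularity, or an equivalent quantitative control of $\Delta z$ by $v,w$ in the fully parabolic setting, your sketch does not yield the coupled $L^p$ bound, and everything downstream depends on it.
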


\begin{remark}
	In the one-dimensional case \( n=1 \), global existence can be established under weaker conditions. This follows directly from the testing procedures and regularity results presented in Section 3.
\end{remark}

Now, we examine the equilibrium points of the system. Let $(u, v, w, z)$ be a classical solution of \eqref{1.1} satisfying \eqref{1.2} and $(u_e, v_e, w_e, z_e)$ denote the equilibrium points of the system \eqref{1.1}, satisfying the following system of equations
\begin{align*}
	\left\{
	\begin{array}{rrll}
		\mu_1u_e(1-u_e+a_1v_e+a_2w_e)=&0,\\
		\mu_2v_e(1-v_e-a_3u_e+a_4w_e)=&0,\\
		\mu_3w_e(1-w_e-a_5u_e-a_6v_e)=&0,\\
		\alpha v_e+\beta w_e-\gamma z_e=&0.
	\end{array}
	\right.
\end{align*}
The system admits the following eight equilibrium points
\begin{enumerate}
	\item Trivial steady states \\
	$\displaystyle{(0, 0, 0, 0) , (1, 0, 0, 0), \left(0, 1, 0, \frac{\alpha}{\gamma}\right), \quad \text{and}\quad \left(0, 0, 1, \frac{\beta}{\gamma}\right)}$.
	\item Semi-coexistence steady states \\
	$\displaystyle{\left(0, \frac{1+a_4}{1+a_4a_6}, \frac{1-a_6}{1+a_4a_6}, \frac{\alpha(1+a_4)+\beta(1-a_6)}{\gamma(1+a_4a_6)}\right), \left(\frac{1+a_2}{1+a_2a_5}, 0, \frac{1-a_5}{1+a_2a_5}, \frac{\beta(1-a_5)}{\gamma(1+a_2a_5)}\right)},\quad \text{and} \\\displaystyle{\left(\frac{1+a_1}{1+a_1a_3}, \frac{1-a_3}{1+a_1a_3}, 0, \frac{\alpha(1-a_3)}{\gamma(1+a_1a_3)}\right)}$.
	\item Coexistence steady state $(\ust, \vst, \wst, \zst)$.
\end{enumerate}

Here, (0,0,0,0) represents the extinction equilibrium where all species vanish; (1,0,0,0) corresponds to the equilibrium with only the secondary predator present; 
$\left(0, 1, 0, \frac{\alpha}{\gamma}\right)$ describes the primary predator-only equilibrium and $\left(0, 0, 1, \frac{\beta}{\gamma}\right)$ characterizes the prey-only equilibrium. Additionally, there exist three semi-coexistence equilibria where any two species persist together. The most ecologically significant state, $(\ust, \vst, \wst, \zst)$, represents the coexistence equilibrium where all three species maintain stable populations. Here, the coexistence steady state $(\ust, \vst, \wst, \zst)$ is explicitly given by
	\begin{align*}
		\ust=&\frac{1+a_1+a_2+a_1a_4+a_6(a_4-a_2)}{1+a_1(a_3+a_4a_5)+a_4a_6+a_2(a_5-a_3a_6)}, \qquad
		\vst=\frac{1-a_3(1+a_2)+a_4+a_5(a_2-a_4)}{1+a_1(a_3+a_4a_5)+a_4a_6+a_2(a_5-a_3a_6)},\\\\
		\wst=&\frac{1+a_1(a_3-a_5)-a_5+a_6(a_3-1)}{1+a_1(a_3+a_4a_5)+a_4a_6+a_2(a_5-a_3a_6)}, \qquad
		\zst= \frac{\alpha v^*+\beta w^*}{\gamma}.
	\end{align*}
The conditions for the coexistence steady state are
\begin{gather}
	\left\{
	\begin{array}{rrll}
		&a_2a_6<&\min\left\{1+a_1+a_2+a_1a_4+a_4a_6, \frac{1+a_1(a_3+a_4a_5)+a_4a_6+a_2a_5}{a_3}\right\},\\
		& a_3(1+a_2)+a_4a_5<&1+a_4+a_2a_5,\\
		&a_5(1+a_1)+a_6<&1+a_3(a_1+a_6).
	\end{array}
	\right.\label{0.1}
\end{gather}
We observe that, among the three species, the secondary predator persists due to its relatively low death rate. Now, we sort out the conditions for the remaining possible steady states $(u_e, v_e, w_e, z_e)$ are
\begin{align*}
	\left\{
	\begin{array}{rrll}
		(1, 0, 0, 0) & \text{if} & \eqref{0.1} \:\:\text{does not hold,} \\
		(\ub, \vb, \wb, \zb)=\left(\frac{1+a_1}{1+a_1a_3}, \frac{1-a_3}{1+a_1a_3}, 0, \frac{\alpha(1-a_3)}{\gamma(1+a_1a_3)}\right) & \text{if} & \eqref{0.1} \:\:\text{does not hold and}\:\:a_3<1, \\
		(\uh, \vh, \wh, \zh)=\left(\frac{1+a_2}{1+a_2a_5}, 0, \frac{1-a_5}{1+a_2a_5}, \frac{\beta(1-a_5)}{\gamma(1+a_2a_5)}\right) & \text{if} & \eqref{0.1} \:\:\text{does not hold and }\:\:a_5<1. \\
	\end{array}
	\right.
\end{align*}
Among the eight possible equilibrium points, we analyze four biologically relevant cases: (i) the coexistence steady state (ii) the trivial steady state (1, 0, 0, 0) (iii) two semi-coexistence steady states. Using a suitable Lyapunov functional, we establish the asymptotic stability. Let us denote $\Gamma_1=\frac{\mu_1a_1}{\mu_2a_3}$ and $\Gamma_2=\frac{\mu_1a_2}{\mu_3a_5}$ in the following theorems.

\begin{theorem}[Coexistence state of the species]\label{t.2}
	Suppose that the assumptions of Theorem 1.1 hold true and let 
	$a_2a_6<\min\left\{1+a_1+a_2+a_1a_4+a_4a_6, \frac{1+a_1(a_3+a_4a_5)+a_4a_6+a_2a_5}{a_3}\right\}$, $a_3(1+a_2)+a_4a_5<1+a_4+a_2a_5$,  $a_5(1+a_1)+a_6<1+a_3(a_1+a_6)$.	
	If the parameters fulfill the relations
	\begin{itemize}
		\item[(i)] $\chi_1^2< \min\left\{\frac{d_1d_2\vst\Gamma_1}{\ust\nv_{\lis}^2\nw_{\lis}^2}, \frac{d_1d_3\wst\Gamma_2}{\ust\nv_{\lis}^2\nw_{\lis}^2}\right\}$,
		\item[(ii)] $d_3\chi_2^2\vst\Gamma_1+d_2\xi^2\wst\Gamma_2<2d_2d_3d_4,$ 
		\item[(iii)] $\mu_2a_4\Gamma_1+\alpha<2\mu_2\Gamma_1+\mu_3a_6\Gamma_2$,
		\item[(iv)] $\mu_2a_4\Gamma_1+\beta<2\mu_3\Gamma_2+\mu_3a_6\Gamma_2$,
		\item[(v)] $\alpha+\beta<2\gamma$,
		\item[(vi)]  $a_1a_4a_5>a_2a_3a_6$,
	\end{itemize}
	then the nonnegative classical solution $(u, v, w, z)$ of the system \eqref{1.1} exponentially converges to the coexistence steady state $(\ust, \vst, \wst, \zst)$ uniformly in $\Omega$ as $t\to\infty$.
\end{theorem}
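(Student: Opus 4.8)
The plan is to construct a Lyapunov functional of relative-entropy type adapted to the three proliferating populations, together with a quadratic term measuring the deviation of the chemical. Concretely, I would set
$$E(t)=\int_\Omega\Big(u-\ust-\ust\ln\tfrac{u}{\ust}\Big)+\Gamma_1\int_\Omega\Big(v-\vst-\vst\ln\tfrac{v}{\vst}\Big)+\Gamma_2\int_\Omega\Big(w-\wst-\wst\ln\tfrac{w}{\wst}\Big)+\tfrac12\int_\Omega(z-\zst)^2,$$
with the weights $\Gamma_1=\tfrac{\mu_1a_1}{\mu_2a_3}$ and $\Gamma_2=\tfrac{\mu_1a_2}{\mu_3a_5}$ from the statement. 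The strong maximum principle, applied to the classical solution furnished by Theorem \ref{t.1}, guarantees $u,v,w>0$ on $\Omega\times(0,\infty)$, so the logarithmic terms are well defined, and elementary convexity shows $E\ge0$ with equality only at the coexistence state.

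Differentiating $E$ along \eqref{1.1} and integrating by parts under the Neumann condition splits $\tfrac{d}{dt}E$ into a diffusion--taxis part gathering the gradient contributions and a kinetic part gathering the pointwise reaction contributions. The diffusion of $u,v,w$ produces the good negative terms $-d_1\ust\int_\Omega\tfrac{|\gu|^2}{u^2}$, $-\Gamma_1 d_2\vst\int_\Omega\tfrac{|\gv|^2}{v^2}$, $-\Gamma_2 d_3\wst\int_\Omega\tfrac{|\gw|^2}{w^2}$, and the diffusion of $z$ gives $-d_4\int_\Omega|\gz|^2$, while the alarm-taxis $-\chi_1\nabla\cdot(u\nabla(vw))$ and the two chemotactic fluxes generate the cross terms $\gu\cdot\gv$, $\gu\cdot\gw$ (with coefficients involving $w$ and $v$) and $\gv\cdot\gz$, $\gw\cdot\gz$. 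The decisive algebraic point is that $\Gamma_1,\Gamma_2$ are chosen precisely so that $\Gamma_1\mu_2 a_3=\mu_1 a_1$ and $\Gamma_2\mu_3 a_5=\mu_1 a_2$, which cancels the $(u-\ust)(v-\vst)$ and $(u-\ust)(w-\wst)$ products in the kinetic part; the remaining quadratic form in $(u-\ust,v-\vst,w-\wst,z-\zst)$ then has only the $(v,w)$-coupling of size $\Gamma_1\mu_2 a_4-\Gamma_2\mu_3 a_6$ and the $z$-couplings $\alpha,\beta$. Condition (vi) fixes the sign of the $(v,w)$-coupling, and conditions (iii)--(v) are exactly the diagonal-dominance inequalities making this kinetic matrix positive definite. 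For the gradient cross terms I would use the uniform bounds $\|v\|_{L^\infty},\|w\|_{L^\infty}\le C$ of Theorem \ref{t.1} and Young's inequality to absorb them into the good diffusion terms: the two alarm-taxis cross terms are controlled by (i), whereas the two chemotactic cross terms compete for the single $\int_\Omega|\gz|^2$ budget and are therefore controlled jointly by (ii). The upshot is
$$\frac{\mathrm{d}}{\mathrm{d}t}E(t)\le-\kappa\int_\Omega\big[(u-\ust)^2+(v-\vst)^2+(w-\wst)^2+(z-\zst)^2\big]$$
for some $\kappa>0$.

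From this dissipation inequality I would conclude convergence in three steps. First, since $E\ge0$ is nonincreasing, the right-hand integral is integrable in time. Second, parabolic regularity combined with the uniform $\lis$/$\wsin$ bounds of Theorem \ref{t.1} yields uniform H\"older continuity of the solution, so the integrand is uniformly continuous in $t$, and a Barbalat-type argument forces $\|u-\ust\|_{\lts}+\|v-\vst\|_{\lts}+\|w-\wst\|_{\lts}+\|z-\zst\|_{\lts}\to0$. Third, a Gagliardo--Nirenberg interpolation against the uniform gradient bounds upgrades this to uniform convergence in $\Omega$. To obtain the exponential rate I would note that, once the solution is close to the positive steady state (which holds for large $t$ by the preceding step, since $\ust,\vst,\wst>0$), the relative entropies are comparable to the squared $\lts$ deviations, i.e.\ $c_1(u-\ust)^2\le u-\ust-\ust\ln\tfrac{u}{\ust}\le c_2(u-\ust)^2$ and similarly for $v,w$; inserting these comparisons into the dissipation inequality gives $\tfrac{\mathrm{d}}{\mathrm{d}t}E\le-c\,E$ for large $t$, hence exponential decay of $E$ and, after the same interpolation, exponential convergence uniformly in $\Omega$.

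The step I expect to be the main obstacle is the control of the alarm-taxis cross terms: because $\nabla(vw)=w\gv+v\gw$ couples $\gu$ simultaneously to $\gv$ and $\gw$ with variable, solution-dependent coefficients, arranging these to be absorbable into the diffusion terms is precisely what forces the product $\|v\|_{L^\infty}^2\|w\|_{L^\infty}^2$ into the denominators of condition (i), and it is the most delicate part of verifying that the full gradient form is negative semidefinite.
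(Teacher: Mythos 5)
Your proposal matches the paper's proof essentially step for step: the same relative-entropy Lyapunov functional with weights $\Gamma_1=\frac{\mu_1a_1}{\mu_2a_3}$, $\Gamma_2=\frac{\mu_1a_2}{\mu_3a_5}$ chosen to cancel the $(u-\ust)(v-\vst)$ and $(u-\ust)(w-\wst)$ kinetic couplings, the same Young/Cauchy absorption of the taxis cross-gradient terms into the diffusion terms using the $\lis$ bounds (yielding conditions (i)--(ii)), the same Barbalat-plus-Gagliardo--Nirenberg upgrade to uniform convergence, and the same Taylor/L'H\^opital comparison of the entropy with the squared $\lts$ deviation to obtain the exponential rate. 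No substantive differences to report.
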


\begin{theorem}[Secondary predator only existence state]\label{t.3}
	Suppose that the assumptions in Theorem 1.1 hold true and let $a_2a_6\geq \min\left\{1+a_1+a_2+a_1a_4+a_4a_6, \frac{1+a_1(a_3+a_4a_5)+a_4a_6+a_2a_5}{a_3}\right\}$, $a_3(1+a_2)+a_4a_5\geq 1+a_4+a_2a_5$,  $a_5(1+a_1)+a_6\geq 1+a_3(a_1+a_6)$.	
	If the parameters satisfy the relations
	\begin{itemize}
		\item[(i)] $\chi_1^2< \min\left\{\frac{d_1d_2}{\nw_{\lis}^2}, \frac{d_1d_3}{\nv_{\lis}^2}\right\}$,
		\item[(ii)] $d_3\chi_2^2\nv_{\lis}^2+d_2\xi^2\nw_{\lis}^2<2d_2d_3d_4$,
		\item[(iii)] $\mu_2+\mu_2a_4\nw_{\lis}+\frac{\alpha}{2}<\Gamma_1\mu_2<\mu_1a_1$,
		\item[(iv)] $\mu_3+\frac{\beta}{2}<\Gamma_2\mu_3<\mu_1a_2$,
		\item[(v)]  $\alpha+\beta<2\gamma$,
		\item[(vi)]  $a_1a_4a_5< a_2a_3a_6$,
	\end{itemize}
	then the nonnegative classical solution $(u, v, w, z)$ of the system \eqref{1.1} converges to the trivial steady state $(1, 0, 0, 0)$ uniformly in $\Omega$ as $t\to\infty$.
\end{theorem}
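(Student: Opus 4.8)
The plan is to establish convergence to the secondary-predator-only state $(1,0,0,0)$ through a Lyapunov functional tailored to the fact that $u$ persists while $v,w,z$ go extinct. Since the surviving component has a positive limit while the others vanish, I would measure $u$ by a relative entropy and the vanishing components by a combination of linear and quadratic terms, setting
\[
\mathcal{E}(t)=\int_\Omega\bigl(u-1-\ln u\bigr)+\Gamma_1\int_\Omega v+\Gamma_2\int_\Omega w+\tfrac12\int_\Omega v^2+\tfrac12\int_\Omega w^2+\tfrac{\kappa}{2}\int_\Omega z^2,
\]
with $\Gamma_1=\tfrac{\mu_1a_1}{\mu_2a_3}$, $\Gamma_2=\tfrac{\mu_1a_2}{\mu_3a_5}$ and a constant $\kappa>0$ to be fixed. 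By convexity of $s\mapsto s-1-\ln s$ and nonnegativity of the remaining terms, $\mathcal{E}\ge0$ with equality precisely at $(1,0,0,0)$, so it is a genuine candidate Lyapunov functional. Theorem~\ref{t.1} guarantees the uniform bounds $\|v\|_{L^\infty},\|w\|_{L^\infty}\le C$ that will be used repeatedly.

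Next I would differentiate $\mathcal{E}$ along the solution and integrate by parts using the Neumann conditions. The entropy term yields the dissipation $-d_1\int_\Omega|\nabla u|^2/u^2$ together with the alarm-taxis contribution $\chi_1\int_\Omega u^{-1}\nabla u\cdot\nabla(vw)$; the quadratic terms yield $-d_2\int_\Omega|\nabla v|^2$, $-d_3\int_\Omega|\nabla w|^2$, $-\kappa d_4\int_\Omega|\nabla z|^2$ together with the chemotactic cross terms $\chi_2\int_\Omega v\nabla v\cdot\nabla z$ and $-\xi\int_\Omega w\nabla w\cdot\nabla z$. The linear terms $\Gamma_1\int_\Omega v$, $\Gamma_2\int_\Omega w$ play a purely reactive role: with the stated $\Gamma_1,\Gamma_2$ the indefinite products $\int_\Omega uv$ and $\int_\Omega uw$ coming from the logistic coupling of $u$ cancel exactly, leaving the favourable linear contributions $\mu_1a_1\tfrac{1-a_3}{a_3}\int_\Omega v$ and $\mu_1a_2\tfrac{1-a_5}{a_5}\int_\Omega w$ (negative once $a_3,a_5>1$, which is the right half of (iii)--(iv)) together with the good quadratic terms $-\Gamma_1\mu_2\int_\Omega v^2$ and $-\Gamma_2\mu_3\int_\Omega w^2$.

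The decisive step is to absorb the gradient cross terms into the dissipation and to verify that the resulting quadratic form in $(u-1,v,w,z)$ is negative definite. Writing $\nabla(vw)=w\nabla v+v\nabla w$ and using $\|v\|_{L^\infty},\|w\|_{L^\infty}\le C$, Young's inequality bounds the alarm-taxis term by $\tfrac{d_1}{2}\int_\Omega|\nabla u|^2/u^2$ plus multiples of $\int_\Omega|\nabla v|^2$ and $\int_\Omega|\nabla w|^2$ whose coefficients are controlled precisely by condition (i); likewise condition (ii) lets the chemotactic cross terms be absorbed into $\int_\Omega|\nabla v|^2,\int_\Omega|\nabla w|^2,\int_\Omega|\nabla z|^2$. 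After discarding the sign-favourable cubic terms (e.g. $-\mu_2\int_\Omega v^3\le0$ and $-\mu_2a_3\int_\Omega uv^2\le0$) and estimating the single unfavourable cubic $\mu_2a_4\int_\Omega v^2w\le\mu_2a_4\|w\|_{L^\infty}\int_\Omega v^2$, the quadratic part reduces to a combination of $-\mu_1\int_\Omega(u-1)^2$, $-(\Gamma_1\mu_2-\mu_2-\mu_2a_4\|w\|_{L^\infty}-\tfrac\alpha2)\int_\Omega v^2$, $-(\Gamma_2\mu_3-\mu_3-\tfrac\beta2)\int_\Omega w^2$ and $-(\kappa\gamma-\cdots)\int_\Omega z^2$, while the $\int_\Omega vw$ term has coefficient proportional to $a_1a_4a_5-a_2a_3a_6$ and is therefore nonpositive by (vi). Conditions (iii)--(v) are exactly what force each diagonal coefficient to be strictly positive, so that $\tfrac{d}{dt}\mathcal{E}\le-\varepsilon\bigl(\int_\Omega(u-1)^2+\int_\Omega v^2+\int_\Omega w^2+\int_\Omega z^2\bigr)=:-\varepsilon\mathcal{F}(t)$ for some $\varepsilon>0$.

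Finally I would integrate this differential inequality. Since $\mathcal{E}\ge0$ is nonincreasing, $\int_0^\infty\mathcal{F}<\infty$; the uniform-in-time bounds of Theorem~\ref{t.1} render $\mathcal{F}$ uniformly continuous, whence $\mathcal{F}(t)\to0$, i.e. $u\to1$ and $v,w,z\to0$ in $L^2(\Omega)$. A Gagliardo--Nirenberg interpolation against the uniform $W^{1,\infty}$ (respectively $W^{1,q}$) bounds then upgrades this to uniform convergence on $\Omega$, which is the assertion. The main obstacle I anticipate is the joint treatment of the alarm-taxis gradient term and the logistic cross and cubic terms: because $v$ and $w$ vanish in the limit, the dissipation arising from their quadratic contributions must simultaneously absorb the alarm-taxis coupling $\chi_1\int_\Omega u^{-1}\nabla u\cdot\nabla(vw)$ (controlled by (i)) and the chemotactic couplings through $z$ (controlled by (ii)), while the weights $\Gamma_1,\Gamma_2$ are dictated by the requirement that the reactive cross terms cancel; checking that all these constraints hold simultaneously under (i)--(vi) is the crux of the argument.
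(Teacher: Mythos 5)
Your proposal is correct and follows essentially the same route as the paper: the identical Lyapunov functional (the paper takes your free weight $\kappa=1$), the same choice of $\Gamma_1,\Gamma_2$ to cancel the $\int_\Omega uv$ and $\int_\Omega uw$ couplings, the same absorption of the taxis cross terms via Young's inequality under (i)--(ii), the same use of (iii)--(vi) to make the quadratic form negative definite, and the same integrability/uniform-continuity/interpolation conclusion. The only difference is cosmetic: the paper additionally records an exponential rate by sandwiching the functional between multiples of $f_2(t)$, which the stated theorem does not require.
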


\begin{theorem}[Semi-coexistence state of the species]\label{t.4}
	Suppose that the assumptions of Theorem 1.1 hold true for the following cases. 
	\begin{enumerate}
		\item {\bf (Prey vanishing state)} Suppose that \eqref{0.1} does not hold and assume that $a_3<1$. If the parameters fulfill the relations
		\begin{itemize}
			\item[(i)] $\chi_1^2< \min\left\{\frac{d_1d_2\vb\Gamma_1}{\ub\nv_{\lis}^2\nw_{\lis}^2}, \frac{d_1d_3}{\ub\nv_{\lis}^2}\right\}$,
			\item[(ii)]  $d_3\chi_2^2\vb\Gamma_1+d_2\xi^2\nw_{\lis}^2<2d_2d_3d_4$,
			\item[(iii)]  $\mu_3+\frac{\beta}{2}<\Gamma_2\mu_3<\mu_1a_2\ub+\Gamma_1\mu_2a_4\vb$,
			\item[(iv)]  $\alpha+\beta<2\gamma$, 
			\item[(v)]  $a_1a_4a_5<a_2a_3a_6$, 
			\item[(vi)]  $\alpha<2\Gamma_1\mu_2$,
		\end{itemize}
		then the nonnegative classical solution $(u, v, w, z)$ of the system \eqref{1.1} exponentially converges to the semi-coexistence steady state $\left(\frac{1+a_1}{1+a_1a_3}, \frac{1-a_3}{1+a_1a_3}, 0, \frac{\alpha(1-a_3)}{\gamma(1+a_1a_3)}\right)$ uniformly in $\Omega$ as $t\to\infty$.
		
		\item {\bf (Primary predator vanishing state)} Suppose that \eqref{0.1} does not hold and assume that $a_5<1$. If the parameters satisfy the relations
		\begin{itemize}
			\item[(i)] $\chi_1^2< \min\left\{\frac{d_1d_2}{\uh\nw_{\lis}^2}, \frac{d_1d_3\wh\Gamma_2}{\uh\nv_{\lis}^2\nw_{\lis}^2}\right\}$,
			\item[(ii)] $d_3\chi_2^2\nv_{\lis}^2+d_2\xi^2\wh\Gamma_2<2d_2d_3d_4$,
			\item[(iii)]  $\mu_2+\mu_2a_4\nw_{\lis}+\frac{\alpha}{2}<\Gamma_1\mu_2$,
			\item[(iv)] $\Gamma_1\mu_2+\Gamma_2\mu_3a_6\wh<\mu_1a_1\uh$,
			\item[(v)] $\alpha+\beta<2\gamma$, 
			\item[(vi)] $a_1a_4a_5< a_2a_3a_6$, 
			\item[(vii)]  $\beta<\Gamma_2\mu_3$
		\end{itemize}
		then the nonnegative classical solution $(u, v, w, z)$ of the system \eqref{1.1} converges to the semi-coexistence steady state $\left(\frac{1+a_2}{1+a_2a_5}, 0, \frac{1-a_5}{1+a_2a_5}, \frac{\beta(1-a_5)}{\gamma(1+a_2a_5)}\right)$ uniformly in $\Omega$ as $t\to\infty$.
	\end{enumerate}
\end{theorem}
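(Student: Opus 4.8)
The plan is to establish both cases by the Lyapunov-functional method; since they are structurally analogous under interchanging the roles of the primary predator $v$ and the prey $w$ (with the associated parameters), I would prove the prey-vanishing state $(\ub,\vb,\wb,\zb)$ in full and then indicate the symmetric modifications for the primary-predator-vanishing state $(\uh,\vh,\wh,\zh)$. For the first case the candidate functional combines a relative-entropy term for each persisting species with a degenerate term for the vanishing one,
\begin{align*}
\mathcal{E}(t)=\ints\Big(u-\ub-\ub\ln\tfrac{u}{\ub}\Big)+\Gamma_1\ints\Big(v-\vb-\vb\ln\tfrac{v}{\vb}\Big)+\Gamma_2\,\mathcal{W}(w)+\tfrac{\kappa}{2}\ints(z-\zb)^2,
\end{align*}
with $\kappa>0$ a weight to be fixed. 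Because $\wb=0$, the natural entropy for $w$ collapses to a linear expression, so the first structural choice is to take $\mathcal{W}(w)$ so that its time derivative still produces a genuine $\sim\ints\mgw^2$ dissipation; this is what the bare $d_3$, unaccompanied by a $\wst$-type weight, in condition~(i) is signalling. The weights $\Gamma_1=\tfrac{\mu_1a_1}{\mu_2a_3}$ and $\Gamma_2=\tfrac{\mu_1a_2}{\mu_3a_5}$ are then forced by the requirement that the antisymmetric $u$--$v$ and $u$--$w$ reaction couplings (coefficients $\mu_1a_1$ against $\Gamma_1\mu_2a_3$, and $\mu_1a_2$ against $\Gamma_2\mu_3a_5$) cancel.

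Differentiating $\mathcal{E}$ along \eqref{1.1} and integrating by parts (using the Neumann conditions), I would split $\tfrac{d}{dt}\mathcal{E}$ into a gradient part and a reaction part. The gradient part is a quadratic form in $\big(\tfrac{\gu}{u},\tfrac{\gv}{v},\gw,\gz\big)$ whose diagonal is negative definite, coming from the diffusion terms, while the alarm-taxis term $-\chi_1\nabla\!\cdot\!(u\nabla(vw))$ and the chemotaxis terms $-\chi_2\nabla\!\cdot\!(v\gz)$, $+\xi\nabla\!\cdot\!(w\gz)$ generate the off-diagonal couplings $\ints\tfrac{\gu}{u}\cdot(w\gv+v\gw)$, $\ints\tfrac{\gv}{v}\cdot\gz$ and $\ints\gw\cdot\gz$. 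Bounding each coupling by Young's inequality, with $\|v\|_{\lis}$ and $\|w\|_{\lis}$ absorbing the non-gradient factors, and requiring the resulting form to be negative semi-definite (by completing squares, equivalently Sylvester's criterion) is exactly what produces conditions~(i) and (ii). I expect the principal obstacle here: the alarm-taxis term couples three gradients simultaneously, so the two sub-couplings $w\,\gu\!\cdot\!\gv/u$ and $v\,\gu\!\cdot\!\gw/u$ must be absorbed at once against the $u$-, $v$- and $w$-dissipations, and the degeneracy of the $w$-component (no true entropy) makes the second absorption the tightest constraint and the source of the bare-$d_3$ bound in (i).

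For the reaction part I would substitute the equilibrium identities satisfied by $(\ub,\vb,\wb,\zb)$ to express each logistic factor through the deviations $u-\ub$, $v-\vb$, $w$ and $z-\zb$. After the cancellations enforced by $\Gamma_1,\Gamma_2$, the reaction contribution becomes a quadratic form in these deviations, and conditions~(iii)--(vi) are precisely the inequalities making this form negative definite. Combining the two parts yields a dissipation inequality
\begin{align*}
\tfrac{d}{dt}\mathcal{E}(t)\le-\delta\ints\big[(u-\ub)^2+(v-\vb)^2+w^2+(z-\zb)^2\big],\qquad \delta>0.
\end{align*}

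Finally, since $\mathcal{E}\ge0$ and is non-increasing, integrating this estimate over $(0,\infty)$ shows the right-hand side is integrable in time; combined with the uniform-in-time bounds and parabolic regularity of Theorem~\ref{t.1}, which render $t\mapsto\ints[(u-\ub)^2+\cdots]$ uniformly continuous, Barbalat's lemma gives $\|u-\ub\|_{\lts}+\|v-\vb\|_{\lts}+\|w\|_{\lts}+\|z-\zb\|_{\lts}\to0$, and a Gagliardo--Nirenberg interpolation against the uniform higher-order (Hölder and $\wsin$/$\wsq$) bounds of Theorem~\ref{t.1} upgrades this to uniform convergence. To obtain the stated \emph{exponential} rate in the prey-vanishing case, I would use that near the equilibrium each entropy term is comparable to the corresponding squared $\lts$-deviation (Taylor-expanding $s\mapsto s-s^*-s^*\ln(s/s^*)$), which strengthens the estimate to $\tfrac{d}{dt}\mathcal{E}\le-c\,\mathcal{E}$ and hence $\mathcal{E}(t)\le\mathcal{E}(0)e^{-ct}$. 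The primary-predator-vanishing state is treated by the same scheme with $v$ and $w$ interchanged; there $w$ carries the entropy term and $v$ the degenerate one, and the additional hypothesis~(vii), $\beta<\Gamma_2\mu_3$, supplies the reaction positivity that is no longer automatic once $v$ rather than $w$ is the vanishing species, yielding the (convergence) conclusion asserted in that case.
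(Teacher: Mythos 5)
Your overall strategy coincides with the paper's: the same Lyapunov functional (relative entropies weighted by $\Gamma_1,\Gamma_2$ for the persisting species, a linear-plus-quadratic term for the vanishing one, and $\tfrac12\ints(z-\zb)^2$), the same cancellation of the antisymmetric reaction couplings by the choice of $\Gamma_1,\Gamma_2$, the same Young-inequality absorption of the taxis cross terms into the diffusive dissipation (yielding (i)--(ii)), and the same Barbalat-plus-Gagliardo--Nirenberg upgrade to uniform convergence, with the second case handled symmetrically.

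There is, however, one step that would fail as written. For the vanishing species the functional necessarily contains a term \emph{linear} in $w$ (in the paper, $\Gamma_2\ints w+\tfrac12\ints w^2$), because only the linear part can cancel the $u$--$w$ reaction coupling when $\wb=0$. Your final dissipation inequality retains only $-\delta\ints w^2$ on the right-hand side, and your exponential-rate argument then asserts that ``each entropy term is comparable to the corresponding squared $\lts$-deviation.'' That comparability holds for the $u$-, $v$- and $z$-terms by Taylor expansion, but not for $\Gamma_2\ints w$: there is no bound of the form $\ints w\le C\ints w^2$ for small $w$, so $\mathcal{E}\le C f$ fails and $\tfrac{d}{dt}\mathcal{E}\le-\delta f$ does not yield $\tfrac{d}{dt}\mathcal{E}\le-c\,\mathcal{E}$. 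The fix is exactly what the strict inequalities in (iii) and (v) provide and what the paper does: keep the additional strictly negative terms $-\big(\mu_1a_2\ub+\Gamma_1\mu_2a_4\vb-\Gamma_2\mu_3\big)\ints w$ and $-\big(\Gamma_2\mu_3a_6-\Gamma_1\mu_2a_4\big)\ints vw$ in the dissipation inequality, prove the two-sided comparison $\kappa_5 f_3\le\mathcal{E}_3\le\kappa_6\big(f_3+\ints w+\ints vw\big)$, and choose the rate constant smaller than those two coefficients so that the linear terms in the upper bound are absorbed. The same remark applies, mutatis mutandis, to the $\Gamma_1\ints v$ and $\ints vw$ terms in the primary-predator-vanishing case.
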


	\section{Preliminaries and local existence}
\quad The aim of this section is to concentrate on the basic estimates and the local existence of the classical solution. The local existence lemma is constructed based on ideas derived from Horstmann \cite{horst} and Tello \cite{tello2007}, along with the references cited therein.

From now on, all the constants $C_i, i=1,2,\ldots$, are assumed to be positive and the numbering is restarted for each proof.

\begin{lemma}[\cite{jzheng}]\label{l.2.1}
	Let $y$ be a positive absolutely continuous function on $(0, \infty)$ that satisfies
	\begin{align*}
		\left\{
		\begin{array}{llll}
			&y'(t)+A y(t)^p\leq B,\\
			&y(0)=y_0,
		\end{array}
		\right.
	\end{align*}
	with some constants $A>0$, $B\geq 0$ and $p\geq 1$. Then for $t>0$, we have 
	\begin{align*}
		y(t)\leq \max\left\{y_0, \: \left(\frac{B}{A}\right)^\frac{1}{p}\right\}.
	\end{align*}
\end{lemma}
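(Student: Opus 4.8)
The plan is to prove the bound by a continuity-and-contradiction argument, comparing $y$ against the constant threshold at which the dissipative term $Ay^p$ balances the source $B$. First I would set $K := (B/A)^{1/p}$, which is well defined since $A>0$ and $B\ge 0$, and $M := \max\{y_0,\, K\}$, so that $AK^p = B$ and the claim reduces to showing $y(t)\le M$ for all $t>0$.

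The pivotal observation is that the source is dominated by dissipation whenever $y$ rises above the threshold $K$. Since the map $s\mapsto s^p$ is strictly increasing on $[0,\infty)$ for $p\ge 1$, any time $t$ with $y(t) > K$ satisfies $Ay(t)^p > AK^p = B$, and the differential inequality then forces $y'(t) \le B - Ay(t)^p < 0$. Thus $y$ is strictly decreasing on any interval on which it stays above $K$.

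Then I would argue by contradiction. Suppose the set $\{t>0 : y(t) > M\}$ is nonempty and pick some $t_1$ in it. Since $y$ is continuous with $y(0)=y_0 \le M$, the quantity $t_0 := \sup\{t\in[0,t_1] : y(t)\le M\}$ satisfies $0\le t_0 < t_1$, and by continuity $y(t_0)=M$ while $y(t) > M \ge K$ for every $t\in(t_0,t_1]$. By the previous step, $y'(t)<0$ for almost every such $t$, and because $y$ is absolutely continuous the fundamental theorem of calculus yields $y(t_1) = y(t_0) + \int_{t_0}^{t_1} y'(s)\,\mathrm{d}s < y(t_0) = M$, contradicting $y(t_1) > M$. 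Hence $y(t)\le M$ for all $t>0$, as claimed.

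The point requiring care — and essentially the only genuine subtlety — is that $y$ is merely absolutely continuous, so the strict inequality $y'<0$ holds only almost everywhere; the conclusion must therefore be extracted from the integral identity $y(t_1)-y(t_0)=\int_{t_0}^{t_1} y'(s)\,\mathrm{d}s$ rather than from a pointwise monotonicity statement, and continuity is what guarantees that the supremum defining $t_0$ is attained with $y(t_0)=M$. The degenerate case $B=0$, giving $K=0$, is covered automatically: positivity of $y$ then makes $y$ strictly decreasing on all of $(0,\infty)$, so $y(t)\le y_0 = M$.
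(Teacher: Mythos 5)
Your argument is correct: the first-crossing/comparison argument (defining $K=(B/A)^{1/p}$, noting $y'<0$ a.e. wherever $y>K$, and deriving a contradiction at the last time $y$ equals $M$ via the absolute-continuity identity $y(t_1)-y(t_0)=\int_{t_0}^{t_1}y'(s)\,\mathrm{d}s$) is exactly the standard proof of this ODE comparison lemma. Note that the paper itself offers no proof here — the lemma is quoted from Zheng's work \cite{jzheng} — but your write-up supplies the expected argument and handles the only real subtleties (a.e.\ validity of the differential inequality and the case $B=0$) correctly.
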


\begin{lemma}[\cite{cstinner2014, ytao2019}]\label{l.2.2}
	Let $T>0$, $\tau\in (0, T)$, $a>0$ and $b>0$, and assume that nonnegative function $y:[0, T)\to [0, \infty)$ is $\mathcal{C}^1$ smooth and such that
	\begin{align*}
		y'(t)+ay(t)\leq h(t), \qquad \forall\, t\in(0, T)
	\end{align*}
	with some nonnegative function $h\in \mathcal{L}^1_{loc}([0, T))$ fulfilling
	\begin{align*}
		\intau h(s)\ds\leq b, \qquad \forall\, t\in[0, T-\tau).
	\end{align*}
	Then
	\begin{align*}
		y(t)\leq \max\Big\{y(0)+b, \frac{b}{a\tau}+2b\Big\}, \qquad \forall\, t\in (0, T).
	\end{align*}
\end{lemma}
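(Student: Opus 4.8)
The plan is to combine the elementary integrating-factor estimate for the linear differential inequality with the localized $\mathcal{L}^1$ control of the forcing term $h$ coming from the window hypothesis, and to organize the whole argument around the point at which $y$ attains its maximum on a compact subinterval. The reason for centering on a maximizer (rather than, say, iterating the integral inequality forward in time) will become clear: it is exactly what lets the unwanted boundary term be absorbed.

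First I would fix an arbitrary $T'\in(\tau,T)$ and, using that $y$ is continuous on the compact interval $[0,T']$, select a point $t^*\in[0,T']$ with $y(t^*)=\max_{s\in[0,T']}y(s)$. It suffices to bound $y(t^*)$, since $T'$ is arbitrary and the resulting bound will be independent of $T'$. Two regimes then arise depending on whether $t^*$ lies in the first window. If $t^*\le\tau$, I would discard the dissipative term (observe $y'\le h$ because $ay\ge 0$) and integrate from $0$ to $t^*$; since the window hypothesis at $t=0$ gives $\int_0^{\tau}h\,\ds\le b$ and $h\ge 0$, this already yields $y(t^*)\le y(0)+b$.

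The substantive case is $t^*>\tau$, where I would work on the window $[t^*-\tau,t^*]$. Choosing a minimum point $\hat t$ of $y$ on this window and integrating $y'\le h$ from $\hat t$ to $t^*$ gives $y(t^*)\le y(\hat t)+\int_{\hat t}^{t^*}h\,\ds\le y(\hat t)+b$, the last step using $[\hat t,t^*]\subseteq[t^*-\tau,t^*]$ together with $\int_{t^*-\tau}^{t^*}h\,\ds\le b$. To control $y(\hat t)$, I would use that a minimum lies below the average, namely $y(\hat t)\le\frac1\tau\int_{t^*-\tau}^{t^*}y(s)\,\ds$, and then integrate the \emph{full} inequality $y'+ay\le h$ over the window to obtain $a\int_{t^*-\tau}^{t^*}y(s)\,\ds\le b+y(t^*-\tau)-y(t^*)$.

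The main obstacle is precisely the boundary term $y(t^*-\tau)$ produced by the fundamental theorem of calculus: a naive estimate leaves it uncontrolled and the argument becomes circular, since it merely bounds $y$ at one time by $y$ at an earlier time. The crux, and the payoff of having chosen $t^*$ as a maximizer, is that $y(t^*-\tau)\le y(t^*)$, so the difference $y(t^*-\tau)-y(t^*)$ is nonpositive and can simply be dropped. Feeding this into the chain of estimates collapses everything to $y(t^*)\le\frac{b}{a\tau}+b\le\frac{b}{a\tau}+2b$. Combining the two regimes gives $y(t^*)\le\max\{y(0)+b,\,\frac{b}{a\tau}+2b\}$, and letting $T'\uparrow T$ furnishes the stated bound for all $t\in(0,T)$.
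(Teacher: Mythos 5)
Your proof is correct. Note that the paper does not actually prove this lemma: it is imported verbatim from the cited references, so there is no in-text argument to compare against. Your maximizer argument is a clean, self-contained derivation: both cases check out, the application of the window hypothesis at $t=0$ and at $t=t^*-\tau$ is legitimate (both base points lie in $[0,T-\tau)$), and the key observation that $y(t^*-\tau)-y(t^*)\le 0$ because $t^*$ maximizes $y$ on $[0,T']$ is exactly what closes the loop that a naive forward iteration would leave open. The standard proofs in the literature for this kind of statement are usually organized around a first time at which the asserted bound would fail (a contradiction at the first crossing), whereas your version works directly at an interior maximizer; the payoff is that you in fact obtain the slightly sharper bound $y(t^*)\le \frac{b}{a\tau}+b$ in the second case, which you then relax to the stated $\frac{b}{a\tau}+2b$. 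One cosmetic remark: you could state explicitly that the existence of the maximizer $t^*$ and of the minimizer $\hat t$ follows from the continuity of $y$ on the compact intervals $[0,T']$ and $[t^*-\tau,t^*]$, but this is immediate from the $\mathcal{C}^1$ hypothesis.
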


\begin{lemma}[Maximal Sobolev regularity \cite{xcao, hieber}]\label{l.2.3}
	Let $r\in(1,\infty)$ and $T\in(0,\infty)$. Consider the following evolution equation
	\begin{align*}
		\left\{
		\begin{array}{llll}
			&y_t=\Delta y- y+g, \qquad & x\in \Omega,\: t>0,\\
			&\frac{\partial y}{\partial\nu}=0,&x\in \partial\Omega,\: t>0,\\
			&y(x,0)=y_0(x), &x\in\Omega.
		\end{array}
		\right.
	\end{align*}
	For each $y_0\in{\mathcal{W}^{2,r}}(\Omega) \:(r>n)$ with $\frac{\partial y_0}{\partial\nu}=0$ on $\partial\Omega$ and any $g\in {\mathcal{L}^{r}}((0,T);{\mathcal{L}^{r}}(\Omega))$, there exists a unique solution
	\begin{align*}
		y \in {\mathcal{W}^{1,r}}\left((0,T);{\mathcal{L}^r}(\Omega)\right)\cap{\mathcal{L}^r}\left((0,T); {\mathcal{W}^{2,r}}(\Omega)\right).
	\end{align*}
	Moreover, there exists $C_r>0$, such that 
	\begin{align*}
		\int_0^T \|y(\cdot,t)\|^r_{\lrs}\mathrm{d}t+&\int_0^T\|y_t(\cdot,t)\|^r_{\lrs}\mathrm{d}t+\int_0^T\|\Delta y(\cdot,t)\|^r_{\lrs}\mathrm{d}t\\
		&\leq C_r\int_0^T\|g(\cdot,t)\|^r_{\lrs}\mathrm{d}t+C_r\|y_0\|^r_{\lrs}+C_r\|\Delta y_0\|^r_{\lrs}.
	\end{align*}
	If $s_0\in[0,T)$ and $y(\cdot, s_0)\in \mathcal{W}^{2,r}(\Omega) \:(r>n)$ with $\frac{\partial y(\cdot, s_0)}{\partial\nu}=0$ on $\partial\Omega$, then
	\begin{align*}
		\intT e^{sr}\|\Delta y(\cdot,t)\|^r_{\lrs}\mathrm{d}t\leq C_r\intT e^{sr}\|g(\cdot,t)\|^r_{\lrs}\mathrm{d}t+C_r \|y(\cdot
		, s_0)\|^r_{\lrs}+C_r\|\Delta y(\cdot, s_0)\|^r_{\lrs}.
	\end{align*}
\end{lemma}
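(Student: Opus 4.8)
The plan is to reformulate the Neumann problem abstractly and appeal to the theory of maximal parabolic regularity. Introduce the operator $A := -\Delta + 1$ on $\lrs$ with domain $D(A) := \{\, y \in {\mathcal{W}^{2,r}}(\Omega) : \bc y = 0 \text{ on } \partial\Omega \,\}$, so that the evolution equation becomes $y_t + A y = g$ with $y(0)=y_0$. First I would record the standard facts that, because $\Omega$ is bounded with smooth boundary, the Neumann Laplacian is sectorial on $\lrs$ and the shift $+1$ pushes its spectrum strictly into the open right half-plane; hence $-A$ generates a bounded analytic $C_0$-semigroup and $A$ is boundedly invertible. Moreover, elliptic regularity furnishes the norm equivalence $\|Ay\|_{\lrs} \simeq \|y\|_{{\mathcal{W}^{2,r}}(\Omega)}$ on $D(A)$, together with $\|\Delta y\|_{\lrs} \le \|Ay\|_{\lrs} + \|y\|_{\lrs}$.

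The decisive step, and the one I expect to be the main obstacle, is to promote sectoriality to \emph{maximal $\mathcal{L}^r$-regularity}. For this it suffices to establish that $A$ possesses a bounded $H^\infty$-calculus, or equivalently bounded imaginary powers with $\|A^{is}\|_{\mathscr{L}(\lrs)} \le C e^{\theta|s|}$ for some power angle $\theta < \pi/2$, or that $A$ is $\mathcal{R}$-sectorial of angle less than $\pi/2$. For second-order elliptic operators with smooth coefficients under homogeneous Neumann conditions on a smooth bounded domain these properties are classical (in the spirit of Seeley, Dore–Venni, Denk–Hieber–Prüss and Weis). Granting this, the Dore–Venni theorem (or Weis's operator-valued Fourier-multiplier theorem) yields the a priori bound in which $\|y_t\|_{\lrs}$ and $\|Ay\|_{\lrs}$ are controlled in $\mathcal{L}^r(0,T)$ by $\|g\|_{\mathcal{L}^r((0,T);\lrs)}$ plus a trace norm of the datum.

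The stated inequality then follows by bookkeeping. Combining the maximal-regularity bound with $\|\Delta y\|_{\lrs} \le \|Ay\|_{\lrs} + \|y\|_{\lrs}$ and the equivalence above dominates the three integrals $\int_0^T \|y\|^r_{\lrs} + \|y_t\|^r_{\lrs} + \|\Delta y\|^r_{\lrs}\,\mathrm{d}t$ by a constant multiple of $\int_0^T \|g\|^r_{\lrs}\,\mathrm{d}t$ plus the datum contribution. For the latter I would use that the sharp trace space for maximal $\mathcal{L}^r$-regularity is the real interpolation space $(\lrs, D(A))_{1-\frac{1}{r},\, r}$; since $y_0 \in {\mathcal{W}^{2,r}}(\Omega)$ with $\bc y_0 = 0$ means $y_0 \in D(A)$, and $D(A)$ embeds continuously into that interpolation space, the datum norm is bounded by $\|y_0\|_{D(A)} \simeq \|y_0\|_{\lrs} + \|\Delta y_0\|_{\lrs}$. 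Relabelling the accumulated constant as $C_r$ gives the first assertion.

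Finally, for the weighted inequality over $(s_0,T)$ I would set $\hat{y}(\cdot,t) := e^{t} y(\cdot,t)$, which solves $\hat{y}_t = \Delta \hat{y} + e^{t} g$ with datum $\hat{y}(\cdot,s_0) = e^{s_0} y(\cdot,s_0) \in D(A)$; this is again a Neumann heat problem, for which maximal $\mathcal{L}^r$-regularity on the finite interval $(s_0,T)$ holds, being insensitive to the zeroth-order shift. Applying the unweighted estimate to $\hat{y}$ and then reading off $\|\Delta \hat{y}\|^r_{\lrs} = e^{tr}\|\Delta y\|^r_{\lrs}$ and $\|e^{t} g\|^r_{\lrs} = e^{tr}\|g\|^r_{\lrs}$ reproduces the exponential weight on both sides, while the datum terms again collapse to $\|y(\cdot,s_0)\|_{\lrs} + \|\Delta y(\cdot,s_0)\|_{\lrs}$ up to the constant $C_r$. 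In summary, the whole lemma rests on the single nontrivial input that the Neumann Laplacian enjoys maximal $\mathcal{L}^r$-regularity; the remainder is elliptic regularity, an interpolation embedding, and a change of variables.
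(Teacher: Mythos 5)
Your proposal is mathematically sound, but note that the paper does not prove this lemma at all: it is quoted verbatim from the cited sources (Hieber--Pr\"uss for the abstract maximal regularity, Cao for the concrete Neumann formulation and the exponentially weighted variant), so the only meaningful comparison is with the route taken in those references. There the key input is obtained differently from what you sketch: Hieber--Pr\"uss derive maximal $\mathcal{L}^r$-regularity from Gaussian upper bounds for the Neumann heat kernel via operator-valued singular-integral (Benedek--Calder\'on--Panzone type) arguments, whereas you go through sectoriality plus a bounded $H^\infty$-calculus and Dore--Venni/Weis. Both are classical and both apply here; the heat-kernel route is tailored to second-order operators and avoids verifying $\mathcal{R}$-sectoriality, while yours is the more flexible abstract machine. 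Your treatment of the data term via the trace space $(\lrs, D(A))_{1-\frac{1}{r},r}$ and the substitution $\hat{y}=e^{t}y$ for the weighted estimate match what Cao does. Two small points deserve care. First, after the substitution the equation for $\hat{y}$ loses the zeroth-order term, and on an interval whose length is not controlled one cannot invoke maximal regularity for the pure Neumann Laplacian wholesale (it is not invertible); one must either note that only the $\Delta\hat{y}$ and $\hat{y}_t$ components are being estimated, or keep a damping term, as in the cited reference. Second, your change of variables produces data terms carrying a factor $e^{s_0 r}$ which is absent from the stated inequality; this is harmless in the paper's applications (where $s_0<1$) but should be absorbed explicitly into $C_r$ or retained.
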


\begin{lemma}[Local Existence]\label{l.2.4}
	Suppose that $\Omega\subset\rsn (n\geq 1)$, is {{a bounded domain}} with smooth boundary. Assume that the initial data $\ui$, $\vi$, $\wi$ and $\zi$ satisfies \eqref{1.2} for some $q>\max\{2, n\}$. Then there exists $\tmax \in(0, \infty]$ such that the system \eqref{1.1} admits a unique solution $(u,v,w,z)$ satisfying
	\begin{align*}
		u,v,w&\in \csotm\cap \cstotm,\\
		z&\in \csotm\cap \cstotm\cap\lisloc.
	\end{align*}
	Moreover, either $\tmax=\infty$, or
	\begin{align}
		\lim_{t\to \tmax}\left(\nut_{\lis}+\nvt_{\wsin}+\nwt_{\wsin}+\nzt_{\wsq}\right)= \infty.\label{l1.2}
	\end{align}
\end{lemma}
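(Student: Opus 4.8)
The plan is to follow the contraction-mapping strategy of \cite{horst, tello2007}: first produce a local mild solution by a Banach fixed point, then upgrade its regularity and extend it to a maximal interval. Fix $T\in(0,1)$ to be chosen later and work in the space
$$\mathcal{X}_T=\Big\{(u,v,w,z):\ u\in\mathcal{C}([0,T];\cso),\ v,w,z\in\mathcal{C}([0,T];\wsq)\Big\},$$
so that $u$ is controlled in $\cso$ while the gradients of the taxing species $v,w$ and of the chemical $z$ are controlled in $\lqs$; since $\zi$ is only $\wsq$, the extra regularity $\Delta z\in\lqs$ needed to treat the chemotactic drifts is available only for $t>0$, with an integrable $t^{-1/2}$ weight near $s=0$. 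On the closed ball $S_T\subset\mathcal{X}_T$ of functions attaining the initial data \eqref{1.2}, I would define the operator $\Phi$ that sends $(\tilde u,\tilde v,\tilde w,\tilde z)$ to the solution of the problem obtained by freezing every taxis and reaction term at the tilde functions; each component is written through the variation-of-constants formula with the Neumann semigroups $(e^{td_i\Delta})$, e.g.
$$z(t)=e^{t(d_4\Delta-\gamma)}\zi+\int_0^t e^{(t-s)(d_4\Delta-\gamma)}\big(\alpha\tilde v+\beta\tilde w\big)\,\ds,$$
and analogously for $u,v,w$ with the divergence-form drifts $-\chi_1\nabla\cdot(\tilde u\nabla(\tilde v\tilde w))$, $-\chi_2\nabla\cdot(\tilde v\nabla\tilde z)$ and $\xi\nabla\cdot(\tilde w\nabla\tilde z)$ kept under the integral. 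A fixed point of $\Phi$ is exactly a mild solution of \eqref{1.1}.

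The core estimate is that $\Phi$ maps $S_T$ into itself and contracts for $T$ small. This rests on the $\mathcal{L}^p$--$\mathcal{L}^q$ smoothing bounds for the Neumann heat semigroup, notably $\|\nabla e^{t\Delta}\varphi\|_{\lqs}\le Ct^{-1/2}\|\varphi\|_{\lqs}$ and $\|e^{t\Delta}\nabla\cdot\varphi\|_{\lqs}\le Ct^{-1/2}\|\varphi\|_{\lqs}$, whose $t^{-1/2}$ singularities are integrable. The hypothesis $q>\max\{2,n\}$ enters decisively through the embedding $\wsq\hookrightarrow\lis$, which makes $\nabla(\tilde v\tilde w)=\tilde w\nabla\tilde v+\tilde v\nabla\tilde w$ as well as $\tilde v\nabla\tilde z$ and $\tilde w\nabla\tilde z$ into $\lqs$ functions whose norms are controlled by the radius of $S_T$; the taxis integrals are then bounded by a constant times $T^{1/2}$ and the logistic reactions, being locally Lipschitz, contribute $O(T)$ terms, so both self-mapping and contraction follow once $T$ is small.

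I expect the principal obstacle to be the alarm-taxis term $-\chi_1\nabla\cdot(u\nabla(vw))$. Expanding it gives $u\,\Delta(vw)=u\,(w\Delta v+2\nabla v\cdot\nabla w+v\Delta w)$, so the $u$-equation is coupled to the \emph{second-order} derivatives of $v$ and $w$ and the diffusion matrix of \eqref{1.1} in $(u,v,w,z)$ is not diagonal. The saving structural feature is that this matrix is \emph{upper triangular} with strictly positive diagonal $d_1,d_2,d_3,d_4$, hence the system is normally parabolic; at the level of the fixed point this manifests as the possibility of keeping the drifts strictly in divergence form, so the semigroup absorbs exactly one derivative and only the first-order quantities $\nabla\tilde v,\nabla\tilde w,\nabla\tilde z$ (controlled in $\wsq$, together with $\Delta\tilde z\in\lqs$ for $t>0$, which handles the chemotactic drifts via $\nabla\cdot(\tilde v\nabla\tilde z)=\nabla\tilde v\cdot\nabla\tilde z+\tilde v\Delta\tilde z$) ever enter the contraction estimates. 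Closing the gradient bounds for the taxing species $v,w$ is the delicate step, and it is precisely where this triangular, divergence-form structure is essential.

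Once a mild solution on some $[0,T]$ is obtained, I would bootstrap its regularity: the source terms now lie in better spaces, so parabolic $\mathcal{L}^p$ and Schauder theory together with the maximal Sobolev regularity of Lemma \ref{l.2.3} promote $(u,v,w,z)$ to the class $\csotm\cap\cstotm$, with $z$ additionally in $\lisloc$. Nonnegativity of each component follows from the maximum principle, since every taxis term is in divergence form and the zeroth-order reaction in each equation vanishes on the corresponding axis. Finally I would set $\tmax$ to be the supremum of existence times and prove the dichotomy \eqref{l1.2} by the usual continuation argument: the existence time produced by the fixed point depends only on $\nut_{\lis}+\nvt_{\wsin}+\nwt_{\wsin}+\nzt_{\wsq}$, so if this quantity remained bounded as $t\uparrow\tmax<\infty$ one could restart the construction from a time just below $\tmax$ and extend the solution past $\tmax$, a contradiction; uniqueness on $[0,\tmax)$ follows from local uniqueness by a standard connectedness argument.
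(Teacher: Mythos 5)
Your contraction-mapping/semigroup argument is exactly the ``standard argument based on parabolic regularity theory'' that the paper invokes here: its proof of Lemma \ref{l.2.4} consists only of a citation to Jin et al.\ \cite{hyjin} plus the remark that nonnegativity follows from the maximum principle, so your proposal fills in precisely the details the authors outsource, in the same way those references do. The one point to watch is that $\vi,\wi$ are merely in $\cso$ by \eqref{1.2}, so the ball $S_T$ cannot require $v,w\in\mathcal{C}([0,T];\wsq)$ \emph{and} attainment of the initial data at $t=0$; the standard fix is a weighted norm such as $\sup_{0<t\le T}t^{1/2}\big\|\nabla v(\cdot,t)\big\|_{\lqs}$, exactly parallel to the integrable weight you already use for $\Delta z$ near $s=0$.
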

\begin{proof}
	The lemma is proven using standard arguments based on parabolic regularity theory. For the detailed proof, we refer to Jin et al. \cite{hyjin}. The non-negativity of the solution in $\Omega\times(0, \tmax)$ is established by satisfying both the maximum principle and the initial condition \eqref{1.2}.
\end{proof}

\begin{lemma}\label{l.2.51}
	The classical solution of \eqref{1.1} satisfies 
	\begin{align}
		\ints\ut+\ints\vt+\ints\wt\leq K_1, \label{l.2.51.1}
	\end{align}
	where $K_1>0$, for all $t\in (0, \tmax)$. Moreover, we have
	\begin{align}
		\intau\ints \ut^2\leq K_2, \qquad t\in (0, \tmax-\tau),\label{l.2.51.2}
	\end{align}
	where $K_2>0$ and $\tau:=\min\{1, \frac{1}{2}\tmax\}$.
\end{lemma}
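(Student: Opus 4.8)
The plan is to reduce everything to mass (i.e. $L^1$) identities obtained by testing each of the first three equations of \eqref{1.1} with the constant $1$. Since the diffusion and taxis terms are all in divergence form, the no-flux conditions make each of them integrate to zero over $\Omega$ (for the alarm-taxis term one uses $\nabla(vw)\cdot\nu=w\,\partial_\nu v+v\,\partial_\nu w=0$ and for the chemotaxis terms $\partial_\nu z=0$ on $\partial\Omega$), leaving only the reaction contributions:
\begin{align*}
\dt\ints u&=\muo\ints u-\muo\ints u^2+\muo a_1\ints uv+\muo a_2\ints uw,\\
\dt\ints v&=\mut\ints v-\mut\ints v^2-\mut a_3\ints uv+\mut a_4\ints vw,\\
\dt\ints w&=\muth\ints w-\muth\ints w^2-\muth a_5\ints uw-\muth a_6\ints vw.
\end{align*}

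The key difficulty is that the first identity carries the \emph{growth-promoting} cross terms $+\muo a_1\ints uv$ and $+\muo a_2\ints uw$, so the $u$-mass cannot be controlled in isolation. My strategy is to form a weighted combination $y(t):=\ints u+c_2\ints v+c_3\ints w$ (so $c_1=1$) whose mixed terms are nonpositive. Differentiating and collecting the mixed integrals, the coefficient of $\ints uv$ is $\muo a_1-c_2\mut a_3$, that of $\ints uw$ is $\muo a_2-c_3\muth a_5$, and that of $\ints vw$ is $c_2\mut a_4-c_3\muth a_6$. Choosing $c_2=\Gamma_1=\frac{\muo a_1}{\mut a_3}$ annihilates the first, while taking $c_3\geq\max\big\{\Gamma_2,\tfrac{\Gamma_1\mut a_4}{\muth a_6}\big\}$ (with $\Gamma_2=\frac{\muo a_2}{\muth a_5}$) renders the other two nonpositive. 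Discarding these terms and writing $(\phi_1,\phi_2,\phi_3)=(u,v,w)$ with corresponding rates $\mu_i$, I obtain
\begin{align*}
y'(t)\leq\sum_{i}c_i\mu_i\ints\phi_i-\sum_{i}c_i\mu_i\ints\phi_i^2.
\end{align*}

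To close the estimate I would apply Cauchy--Schwarz, $\ints\phi_i^2\geq\frac{1}{\momega}\big(\ints\phi_i\big)^2$, and complete the square: for any fixed $\delta>0$ each summand $c_i(\mu_i+\delta)\ints\phi_i-\frac{c_i\mu_i}{\momega}\big(\ints\phi_i\big)^2$ is bounded above by $\frac{c_i(\mu_i+\delta)^2\momega}{4\mu_i}$, whence $y'(t)+\delta y(t)\leq C$ for a constant $C>0$. Lemma \ref{l.2.1} with $p=1$ then gives $y(t)\leq\max\{y(0),C/\delta\}$, and positivity of the weights yields \eqref{l.2.51.1}. For the space-time bound \eqref{l.2.51.2} I would instead retain the negative quadratic terms, obtaining $y'(t)+\sum_i c_i\mu_i\ints\phi_i^2\leq\sum_i c_i\mu_i\ints\phi_i\leq C'$, with $C'$ furnished by the mass bound just proved; integrating over $[t,t+\tau]$ and dropping $y(t+\tau)\geq0$ controls $\intau\ints u^2$ by $C'\tau+y(t)$, which is uniform in $t$. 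The main obstacle is purely the combinatorial selection of $c_2,c_3$ that makes \emph{every} mixed term nonpositive; once the roles of $\Gamma_1,\Gamma_2$ are recognised, the remainder is a routine ODE comparison.
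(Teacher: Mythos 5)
Your proposal is correct and follows essentially the same route as the paper: a weighted combination of the three masses with weights chosen so that every cross term $\ints uv$, $\ints uw$, $\ints vw$ becomes nonpositive (the paper normalises the weight on $w$ to $1$ and takes $\zeta_1,\zeta_2\in(0,1)$ with $\zeta_1\mu_1a_1\leq\zeta_2\mu_2a_3$, $\zeta_1\mu_1a_2\leq\mu_3a_5$, $\zeta_2\mu_2a_4\leq\mu_3a_6$, which is your choice up to rescaling), followed by an ODE comparison via Lemma \ref{l.2.1} and integration over $(t,t+\tau)$ for the space--time $L^2$ bound. The only cosmetic difference is that the paper absorbs the linear growth terms into half of the quadratic terms by Young's inequality, obtaining a single differential inequality that yields both claims at once, whereas you use Cauchy--Schwarz and completion of the square and then feed the mass bound back in for the second estimate; both are routine and valid.
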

\begin{proof}
	Choose $\zeta_1, \zeta_2\in (0, 1)$ with $\zeta_1\mu_1a_1\leq \zeta_2\mu_2a_3, \zeta_1\mu_1a_2\leq \mu_3a_5$, $\zeta_2\mu_2a_4\leq \mu_3a_6$ and set\\ $\overline{\mu}=\min\{\mu_1, \mu_2, \mu_3\}$. From first and second equations of \eqref{1.1} gives
	\begin{align*}
		\dt\left(\zeta_1 \ints u\right)=\zeta_1\mu_1\ints u-\zeta_1\mu_1\ints u^2+\zeta_1\mu_1a_1\ints uv+\zeta_1\mu_1a_2\ints uw
	\end{align*}
	and
	\begin{align*}
		\dt\left(\zeta_2\ints v\right)=\zeta_2\mu_2\ints v-\zeta_2\mu_2\ints v^2-\zeta_2\mu_2a_3\ints uv+\zeta_2\mu_2a_4\ints vw.
	\end{align*} 
	Adding the above equality with the third equation in \eqref{1.1}, we get
	\begin{align*}
		\dt\left(\zeta_1\ints u+\zeta_2\ints v+\ints w\right)+&\zeta_1\mu_1\ints u+\zeta_2\mu_2\ints v+\mu_3\ints w\\
		= &\: 2\zeta_1\mu_1\ints u-\zeta_1\mu_1\ints u^2+\zeta_1\mu_1a_1\ints uv+\zeta_1\mu_1a_2\ints uw\\
		&+2\zeta_2\mu_2\ints v-\zeta_2\mu_2\ints v^2-\zeta_2\mu_2a_3\ints uv+\zeta_2\mu_2a_4\ints vw\\
		&+2\mu_3\ints w-\mu_3\ints w^2-\mu_3a_5\ints uw-\mu_3a_6\ints vw.
	\end{align*}
	Using {{the}}  Young inequality with $C_1>0$, one has
	\begin{align*}
		\dt\left(\zeta_1\ints u+\zeta_2\ints v+\ints w\right)+&\zeta_1\mu_1\ints u+\zeta_2\mu_2\ints v+\mu_3\ints w\\
		\leq&\:-\frac{\zeta_1\mu_1}{2}\ints u^2-\frac{\zeta_2\mu_2}{2}\ints v^2-\frac{\mu_3}{2}\ints w^2+\ints(\zeta_1\mu_1a_1-\zeta_2\mu_2a_3) uv\\
		&+\ints(\zeta_1\mu_1a_2-\mu_3a_5) uw+\ints(\zeta_2\mu_2a_4-\mu_3a_6) vw+C_1.
	\end{align*}
	Simplifying, we get
	\begin{equation} \begin{split}
			\dt\left(\zeta_1\ints u+\zeta_2\ints v+\ints w\right)+&\overline{\mu}\left(\zeta_1\ints u+\zeta_2\ints v+\ints w\right) \\
			&+\frac{\overline{\mu}}{2}\left(\zeta_1\ints u^2+\zeta_2\ints v^2+\ints w^2\right)\leq\:C_1. \label{l.2.51.3}
	\end{split} \end{equation}
	Applying Lemma \ref{l.2.1}, we get
	\begin{align*}
		\ints u+\ints v+\ints w\leq C_2, \qquad \forall\, t\in (0, \tmax),
	\end{align*}
	where $C_2>0$ this gives \eqref{l.2.51.1}. For \eqref{l.2.51.2}, integrate \eqref{l.2.51.3} over $(t, t+\tau)$ to get
	\begin{align*}
		\intau\ints u^2+\intau\ints v^2+\intau\ints w^2\leq C_3, \qquad t\in (0, \tmax-\tau),
	\end{align*}
	where $C_3>0$. This completes the proof.
\end{proof}

	\section{Global existence of solutions}
\quad This section focuses on proving the global existence and boundedness of the solution to \eqref{1.1}. We begin by deriving the $\lps$ norm for $v$ and $w$. For any $s_0 \in (0, \tmax)$ with $s_0 < 1$, Lemma \ref{l.2.4} ensures that $u(\cdot, s_0), v(\cdot, s_0)$, $w(\cdot, s_0)$ and $z(\cdot, s_0)$ belong to $\cts$, with $\frac{\partial {z}(\cdot, s_0)}{\partial \nu} = 0$. Let $C > 0$ be a constant satisfying the following 
\begin{align}
	\left\{
	\begin{array}{rrll}
		&\sup\limits_{0\leq s\leq s_0}\big\|\vs\big\|_{\lis}\leq C, \qquad \sup\limits_{0\leq s\leq s_0}\big\|\ws\big\|_{\lis}\leq C, \\ &\sup\limits_{0\leq s\leq s_0}\big\|\zs\big\|_{\lis}\leq C, \qquad \big\|\Delta z(\cdot, s_0)\big\|_{\lis}\leq C 
	\end{array}
	\right.\label{l3.0}
\end{align}
and
\begin{align}
	&\sup\limits_{0\leq s\leq s_0}\big\|\us\big\|_{\lis}\leq C, \qquad \sup\limits_{0\leq s\leq s_0}\big\|\nabla \vs\big\|_{\lis}\leq C, \qquad \sup\limits_{0\leq s\leq s_0}\big\|\nabla \ws\big\|_{\lis}\leq C.\label{l3.1}
\end{align}
Next, we derive boundedness in $t\in (s_0,\tmax)$.

\begin{lemma}\label{l.3.1}
	Suppose that $\Omega\subset\rsn (n\geq 1)$, is {{a bounded domain}} with smooth boundary. Then for any $p> 1$, there exists $\mu(p,\chi_2,\xi,\alpha, \beta, a_4)>0$, such that if $\mu<\min\{\mu_2, \mu_3\}$, then we have
	\begin{align}
		\nvt_{\lps}+\nwt_{\lps}\leq K_3, \qquad \qquad \forall \, t\in(s_0,\tmax),\label{l.3.1.1}
	\end{align}
	for some $K_3>0$.
\end{lemma}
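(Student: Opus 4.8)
The plan is to obtain the $\lps$ bounds on $v$ and $w$ by the standard $v^{p-1}$/$w^{p-1}$ testing procedure, coupled with the maximal Sobolev regularity of Lemma~\ref{l.2.3} to control the taxis contributions. First I would multiply the $v$-equation of \eqref{1.1} by $v^{p-1}$ and the $w$-equation by $w^{p-1}$, integrate over $\Omega$, and integrate by parts using the homogeneous Neumann conditions. The diffusion terms then produce the nonnegative dissipation $\frac{4d_2(p-1)}{p^2}\ints|\nabla v^{p/2}|^2$ and $\frac{4d_3(p-1)}{p^2}\ints|\nabla w^{p/2}|^2$, which I will simply discard. The taxis terms can be shifted onto the Laplacian of $z$, yielding $-\frac{\chi_2(p-1)}{p}\ints v^p\Delta z$ and $+\frac{\xi(p-1)}{p}\ints w^p\Delta z$. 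From the kinetic terms I keep the favourable contributions $-\mu_2\ints v^{p+1}$ and $-\mu_3\ints w^{p+1}$, discard the sign-definite cross terms involving $u$ (by nonnegativity of $u$), and retain only the genuinely problematic coupling $\mu_2a_4\ints v^p w$.

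Second, I would use Young's inequality to prepare for absorption. For any $\epsilon>0$ one has $\ints v^p|\Delta z|\le\epsilon\ints v^{p+1}+C(\epsilon)\ints|\Delta z|^{p+1}$ and the analogous bound for $w$, while $\mu_2a_4\ints v^p w\le\frac{\mu_2}{4}\ints v^{p+1}+C(a_4,p)\mu_2\ints w^{p+1}$. Collecting everything, and if necessary weighting the $v$-part by a small fixed factor so that the $w^{p+1}$-coefficient stays negative, I arrive at a differential inequality of the form
\[
\dt\left(\ints v^p+\ints w^p\right)+(p+1)\left(\ints v^p+\ints w^p\right)+\kappa\left(\ints v^{p+1}+\ints w^{p+1}\right)\le C_\star\ints|\Delta z|^{p+1}+C,
\]
where the dissipation constant $\kappa>0$ is available precisely when $\min\{\mu_2,\mu_3\}$ is large (this is where the threshold $\mu$ is born), whereas $C_\star$ depends only on $p,\chi_2,\xi$ through the fixed choice of $\epsilon$. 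Note that the $\int v^p$, $\int w^p$ terms on the left cost nothing, being recovered from the $L^{p+1}$ dissipation by Young's inequality up to an additive constant.

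Third, I would control $\ints|\Delta z|^{p+1}$ through Lemma~\ref{l.2.3} applied to $z_t=d_4\Delta z-\gamma z+(\alpha v+\beta w)$, after the standard normalization bringing it to the model form, with $r=p+1$. The exponentially weighted estimate gives, in time-integrated form,
\[
\intT e^{(p+1)t}\ints|\Delta z|^{p+1}\,\mathrm{d}t\le C\intT e^{(p+1)t}\ints(\alpha v+\beta w)^{p+1}\,\mathrm{d}t+C,
\]
with the initial contribution at $s_0$ controlled by \eqref{l3.0}--\eqref{l3.1}; since $(\alpha v+\beta w)^{p+1}\le C(\alpha,\beta,p)(v^{p+1}+w^{p+1})$, the right-hand side is governed by the very $L^{p+1}$ quantities generated in the second step. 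Finally I close the loop: multiplying the differential inequality by $e^{(p+1)t}$, integrating over $(s_0,T)$, and substituting this regularity bound, the resulting $\intT e^{(p+1)t}\ints(v^{p+1}+w^{p+1})$ terms can be absorbed into the $\kappa$-dissipation provided $C_\star\,C\,C(\alpha,\beta,p)<\kappa$; this smallness requirement is exactly the condition $\min\{\mu_2,\mu_3\}>\mu(p,\chi_2,\xi,\alpha,\beta,a_4)$. What then remains is $e^{(p+1)T}\big(\ints v^p+\ints w^p\big)\big|_{t=T}\le Ce^{(p+1)T}+C$, from which \eqref{l.3.1.1} follows directly, uniformly in $T\in(s_0,\tmax)$.

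The main obstacle is precisely this feedback loop: the chemotaxis terms force $\ints|\Delta z|^{p+1}$ into the estimate, while parabolic regularity bounds $\ints|\Delta z|^{p+1}$ back in terms of $v^{p+1}+w^{p+1}$. Severing the loop demands that the logistic dissipation strictly dominate the product of the taxis constant $C_\star$ with the regularity constant, which is the reason for the largeness hypothesis on $\mu_2,\mu_3$. A secondary but essential difficulty is the coupling term $\mu_2a_4\ints v^p w$, whose Young constant scales with $\mu_2$: one must verify that it does not destroy the positivity of $\kappa$, which forces the dependence of $\mu$ on $a_4$ and justifies the small weighting of the $v$-component.
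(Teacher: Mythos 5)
Your proposal is correct and follows essentially the same route as the paper: testing with $v^{p-1}$ and $w^{p-1}$, shifting the taxis terms onto $\Delta z$, Young's inequality, the exponentially weighted maximal Sobolev regularity of Lemma~\ref{l.2.3} applied to the $z$-equation, and closing the resulting feedback loop by absorbing the $v^{p+1}$, $w^{p+1}$ terms into the logistic dissipation under the largeness condition on $\min\{\mu_2,\mu_3\}$. The only cosmetic deviation is that you discard the gradient dissipation and recover the linear decay term $(p+1)\int_\Omega v^p$ from the $L^{p+1}$ dissipation via Young's inequality, whereas the paper produces it from the gradient term through the Gagliardo--Nirenberg inequality combined with the $L^1$ bound of Lemma~\ref{l.2.51}; you also correctly flag (rather than resolve) the same delicate point the paper glosses over, namely that the Young constant attached to $\mu_2 a_4\int_\Omega v^p w$ carries a factor of $\mu_2$ into the coefficient of $\int_\Omega w^{p+1}$.
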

\begin{proof}
	Multiplying the second equation of \eqref{1.1} by $\vqmo$, $p>1$ and integrating   over $\Omega$, we get
	\begin{align*}
		\ints v_t\vqmo&=d_2\ints\vqmo\Delta v
		-\chi_2\ints\vqmo\nabla\cdot\left(v\gz\right)+\mu_2\ints\vqmo v(1-v-a_3u+a_4w).
	\end{align*}
	Applying the technique of integration by parts, we obtain
	\begin{align*}
		\dt\fracq\ints\vq=&-d_2(p-1)\ints\vqt|\gv|^2+\chi_2(p-1)\ints\vqmo\gv\gz+\mut\ints\vq-\mut\ints\vqpo\\
		&-\mut a_3\ints\vq u+\mut a_4\ints\vq w.
	\end{align*}
	Again, use of integration by parts, gives us that
	\begin{equation} \begin{split}
			\dt\fracq\ints\vq=&-d_2(p-1)\ints\vqt|\gv|^2-\frac{\chi_2(p-1)}{p}\ints\vq\lz+\mut\ints\vq-\mut\ints\vqpo \\
			&-\mut a_3\ints\vq u+\mut a_4\ints\vq w \\
			\leq&-\mut\ints\vqpo-d_2(p-1)\ints\vqt|\gv|^2-\frac{\chi_2(p-1)}{p}\ints\vq\lz+\mut\ints\vq +\mut a_4\ints\vq w\\
            \leq& -\mut\ints\vqpo+I_1+I_2+I_3+I_4. \label{l.3.1.2}
	\end{split} \end{equation}
	Utilizing the Gagliardo-Nirenberg inequality  and the Young inequality, it follows that {{(also thanks to \eqref{l.2.51.1})}}
	\begin{align*}
		\ints\vq=\left\| v^{\frac{p}{2}}\right\|^2_{\lts}\leq&\: C_1(p,n,d)\left(\left\|\nabla v^{\frac{p}{2}}\right\|^{2d}_{\lts}\,\,\left\|v^{\frac{p}{2}}\right\|^{2(1-d)}_{\ltps}+\left\|v^{\frac{p}{2}}\right\|^2_{\ltps}\right) \\
		&\leq \frac{4d_2(p-1)}{p(p+1)}\left(\left\|\nabla v^{\frac{p}{2}}\right\|^{2d}_{\lts}\right)^\frac{1}{d}+{C_2}\left(\left\|v^{\frac{p}{2}}\right\|^{2(1-d)}_{\ltps}\right)^\frac{1}{1-d}+C_1\left\|v^{\frac{p}{2}}\right\|^2_{\ltps} \\
		&\leq \frac{4d_2(p-1)}{p(p+1)}\left\|\nabla v^{\frac{p}{2}}\right\|^{2}_{\lts}+C_3 \big\|v\big\|^p_{\los} \\
		&\leq \frac{4d_2(p-1)}{p(p+1)}\frac{p^2}{4}\ints v^{p-2} |\nabla v|^2+C_4, 
	\end{align*}
	with $C_2=(1-d) d^{\frac{d}{1-d}} \left(\frac{4 d_2 (p-1)}{p(p+1)}\right)^{\frac{-d}{1-d}} C_1^\frac{1}{1-d}$,
	where $d=\frac{\frac{p}{2}-\frac{1}{2}}{\frac{p}{2}+\frac{1}{n}-\frac{1}{2}}\in(0,1)$, $C_4=C_3K_1$ and $C_1$ is a constant from the Gagliardo-Nirenberg inequality. Hence
	\begin{align}
		\fqpoq\ints \vq&\leq d_2(p-1)\ints v^{p-2} |\nabla v|^2+C_5,\label{l.3.1.3}
	\end{align} 
	with $C_5=\frac{p+1}{p}C_4$. 
	Now we estimate $I_1$ using \eqref{l.3.1.3} as follows 
	\begin{align}
		I_1=-d_2(p-1)\ints v^{p-2}\big|\gv\big|^2\leq -\fqpoq\ints\vq+C_5.\label{l.3.1.4}
	\end{align}
	For $p>1$, $\left(1-\frac{1}{p}\right)<1$,
	\begin{align*}
		I_2&=-\frac{\chi_2(p-1)}{p}\ints\vq\lz\leq\chi_2\ints\vq|\lz|.
	\end{align*}
	Applying the Young inequality with $\delta>0$,
	\begin{align}
		I_2
		&\leq  \delta\ints\vqpo+C_6\kmq\ctqpo\ints\mlzqpo,\label{l.3.1.5}
	\end{align}
	where $C_6=\left(\fqpoq\right)^{-p}\frac{1}{p+1}$. Again, using the Young inequality, we obtain
	\begin{align}
		I_3&=\mut\ints\vq
		\leq  \rho\ints \vqpo+C_7,\label{l.3.1.6}
	\end{align}
	where $C_7=\rho^{-p}\left(\fqpoq\right)^{-p}\frac{1}{p+1}\mut^{p+1}\donmeg$, and
	\begin{align}
		I_4&=\ints \mut a_4\vq w \leq \eta\ints\vqpo+C_{6}\eta^{-p}\mutqpo\afqpo\ints \wqpo.\label{l.3.1.7}
	\end{align}
	Substituting \eqref{l.3.1.4} - \eqref{l.3.1.7} in \eqref{l.3.1.2}, we see that
	\begin{equation} 
		\begin{split}
			\dt\left(\frac{1}{p}\ints\vq\right) \leq & -(p+1)\left(\frac{1}{p}\ints\vq\right)-\left(\mu_2-\delta-\rho-\eta\right)\ints\vqpo+C_6\kmq\ctqpo\ints\mlzqpo\\
			&+C_{6}\eta^{-p}\mutqpo\afqpo\ints \wqpo+C_{8},\label{l.3.1.8}
		\end{split}
	\end{equation}
	where $C_8=C_5+C_7$. Adopting the variation of constants formula to \eqref{l.3.1.8},
	\begin{equation} \begin{split}
			\frac{1}{p}\ints\vq \leq & -(\mu_2-\delta-\rho-\eta) \intts\eqts\!\ints\vqpo+C_6\kmq\ctqpo\intts\eqts\!\ints\mlzqpo\\
			&+C_{6}\eta^{-p}\mutqpo\afqpo\intts\eqts\ints \wqpo+C_{9},\label{l.3.1.9}
	\end{split} \end{equation}
	where $C_{9}=\frac{1}{p}\ints\vi^p+C_{8}\frac{1}{p+1}$.
	According to Lemma \ref{l.2.3}, there
	exists $C_p > 0$ such that
	\begin{equation} \begin{split}
			C_6\kmq\ctqpo\intts\eqts\ints\mlzqpo\leq&\: C_6 C_p\kmq\ctqpo\intts\eqts\ints \Big(\alpha v+\beta w\Big)^{p+1} \\
			&+C_6C_p\kmq\ctqpo e^{-(p+1)t}\big\|\zi\big\|^{p+1}_{\mathcal{W}^{2, p+1}(\Omega)} \\
			\leq &\: C_6 C_p\kmq\ctqpo 2^{p+1} \alpha^{p+1}\intts\eqts\ints v^{p+1} \\
			&+C_6 C_p\kmq\ctqpo 2^{p+1} \beta^{p+1}\intts\eqts\ints w^{p+1} \\
			&+C_6C_p\kmq\ctqpo \big\|\zi\big\|^{p+1}_{\mathcal{W}^{2, p+1}(\Omega)}.\label{l.3.1.10}
	\end{split} \end{equation}
	Substituting the last inequality \eqref{l.3.1.10} into \eqref{l.3.1.9}, we arrive at
	\begin{equation} \begin{split}		    
			\frac{1}{q}\ints\vq \leq & -\left(\mu_2-\delta-\rho-\eta-C_6 C_p\kmq\ctqpo 2^{p+1} \alpha^{p+1}\right)\intts\eqts\ints v^{p+1} \\
			&+\left(C_6 C_p\kmq\ctqpo 2^{p+1} \beta^{p+1}+C_{6}\eta^{-p}\mutqpo\afqpo\right)\intts\eqts\ints \wqpo+C_{10}.\label{l.3.1.11}
	\end{split} \end{equation}
	where $C_{10}=C_{9}+C_6C_p\kmq\ctqpo\big\|\zi\big\|^{p+1}_{\mathcal{W}^{2, p+1}(\Omega)}$.
	Similarly, we can estimate for $w$ as follows
	\begin{equation} \begin{split}
			\frac{1}{p}\ints w^p \leq & -\left(\mu_3-\delta-\rho-C_{6} C'_p\kmq\xqpo 2^{p+1} \beta^{p+1}\right)\intts\eqts\ints w^{p+1} \\
			&+C_{6} C'_p\kmq\xqpo 2^{p+1} \alpha^{p+1}\intts\eqts\ints \vqpo+C_{11},\label{l.3.1.12}
	\end{split} \end{equation}
	where the constant $C_{11}>0$. Adding \eqref{l.3.1.11} and \eqref{l.3.1.12} affords us
	\begin{equation} \begin{split}
			\frac{1}{p}\ints\vq {+\frac{1}{p}\ints w^p}\leq \Big(&-\mu_2+\delta+\rho+\eta+C_6 C_p\kmq\ctqpo 2^{p+1} \alpha^{p+1}+C_{6} C'_p\kmq\xqpo 2^{p+1} \alpha^{p+1}\Big) \\
			&\times\intts\eqts\ints v^{p+1} \\
			+&\Big(-\mu_3+\delta+\rho+C_{6} C'_p\kmq\xqpo 2^{p+1} \beta^{p+1}+C_6 C_p\kmq\ctqpo 2^{p+1} \beta^{p+1} \\
			&+C_{6}\eta^{-p}\mutqpo\afqpo\Big)\times\intts\eqts\ints \wqpo+C_{12}, \label{l.3.1.13}
	\end{split} \end{equation}
	where $C_{12}=C_{10}+C_{11}$. Let
	\begin{align*}
		\mu=\max\Big\{\delta&+\rho+\eta+C_6 C_p\kmq\ctqpo 2^{p+1} \alpha^{p+1}+C_{6} C'_p\kmq\xqpo 2^{p+1} \alpha^{p+1},\\
		&\delta+\rho+C_{6} C'_p\kmq\xqpo 2^{p+1} \beta^{p+1}+C_6 C_p\kmq\ctqpo 2^{p+1} \beta^{p+1}+C_{6}\eta^{-p}\mutqpo\afqpo\Big\}.
	\end{align*}
	Then \eqref{l.3.1.13} can be written as
	\begin{align*}
		\frac{1}{p}\ints\vq{+\frac{1}{p}\ints w^p} \leq  -&(\mu_2-\mu)\intts\eqts\ints v^{p+1}-(\mu_3-\mu)\intts\eqts\ints \wqpo+C_{12},
	\end{align*}
	such that $\mu_2-\mu>0$ and $\mu_3-\mu>0$ because of our assumptions $0<\mu<\min\{\mu_2, \mu_3\}$. Hence, we deduce that
	\begin{align*}
		\ints\vq+\ints w^p&\leq C_{12},\qquad \forall\, t\in(s_0,\tmax).
	\end{align*}
    This completes the proof.
\end{proof}

\begin{lemma}\label{l.2.6}
	Let $n\geq 1$ and the initial data $(\ui, \vi,\wi, \zi)$ satisfy \eqref{1.2} with some $q>n$. Then {{if $v,w$ comply with the conclusion of Lemma \ref{l.3.1}}} we have
	\begin{align*}
		\sup\limits_{t\in(0,\tmax)}\left(\nvt_{\lis}+\nwt_{\lis}+\nzt_{\wsq}\right)\leq K_4, 
	\end{align*}
	where $K_4>0$.	
	\end{lemma}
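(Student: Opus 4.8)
The plan is to run a two-step bootstrap that upgrades the uniform $L^p$ control of $v$ and $w$ furnished by Lemma \ref{l.3.1} (valid for every finite $p$) into the stated $L^\infty$ and $W^{1,q}$ bounds. First I would improve the regularity of the chemical $z$, obtaining $\nabla z\in\lis$, and then feed this gradient bound back into the $v$- and $w$-equations to close their $L^\infty$ estimates. No circularity occurs, since the $L^p$ bounds are already available. Throughout I would work on $(s_0,\tmax)$, where $\tmax$ may be infinite, so every estimate must be uniform in $t$; on the initial slice $(0,s_0]$ boundedness is given directly by \eqref{l3.0}.

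\emph{Step 1 (regularity of $z$).} I would regard the fourth equation of \eqref{1.1} as the linear problem $z_t=d_4\Delta z-\gamma z+(\alpha v+\beta w)$ with Neumann data and represent $z$ by the variation-of-constants formula against $(e^{t(d_4\Delta-\gamma)})_{t\ge0}$. Fixing $p>n$, Lemma \ref{l.3.1} gives $\|(\alpha v+\beta w)(\cdot,s)\|_{\lps}\le C$ uniformly in $s$, and the smoothing estimate $\|\nabla e^{t(d_4\Delta-\gamma)}f\|_{\lis}\le C(1+t^{-\frac12-\frac{n}{2p}})e^{-\gamma t}\|f\|_{\lps}$ applies. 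Because $p>n$ forces $\frac12+\frac{n}{2p}<1$, the kernel is integrable near $s=t$, while the factor $e^{-\gamma t}$ tames the tail of the (possibly infinite) time interval; hence $\sup_t\|\nabla z(\cdot,t)\|_{\lis}\le C$, and an $L^p\to L^\infty$ estimate likewise bounds $\|z(\cdot,t)\|_{\lis}$. Since $\Omega$ is bounded and $q<\infty$, this already yields $\nzt_{\wsq}\le K_4$.

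\emph{Step 2 ($L^\infty$ bounds for $v$ and $w$).} With $\nabla z\in\lis$ secured, I would treat the second equation of \eqref{1.1} by Duhamel, writing it in the shifted form $v_t=(d_2\Delta-1)v-\chi_2\nabla\cdot(v\gz)+\big[v+\mu_2 v(1-v-a_3u+a_4w)\big]$ so that the semigroup $(e^{t(d_2\Delta-1)})_{t\ge0}$ carries an exponential decay $e^{-\sigma}$ — this is what makes the resulting integrals uniformly bounded in $t$. The taxis contribution is controlled via $\|e^{\sigma(d_2\Delta-1)}\nabla\cdot g\|_{\lis}\le C(1+\sigma^{-\frac12-\frac{n}{2r}})e^{-\sigma}\|g\|_{\lrs}$ with $g=v\gz$, where $r>n$ makes the time integral convergent and $\|v\gz\|_{\lrs}\le\|\gz\|_{\lis}\|v\|_{\lrs}\le C$ by Step 1 and Lemma \ref{l.3.1}. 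In the bracketed reaction term the predation loss $-\mu_2 a_3 uv$ is nonpositive, so by positivity of the Neumann heat semigroup its Duhamel contribution only lowers $v$ and may be dropped from the upper bound, which is precisely what allows the argument to run \emph{without} any $L^\infty$ control of $u$; the remaining terms are dominated by $v+v^2+vw$, all lying in $L^\infty_t\lrs$ by Lemma \ref{l.3.1}, and their $L^r\to L^\infty$ contributions are time-integrable and uniformly bounded thanks to the decay $e^{-\sigma}$. Together with $v\ge0$ this gives $\sup_t\|v(\cdot,t)\|_{\lis}\le C$; the $w$-equation is identical, its cross terms $-\mu_3 a_5 uw-\mu_3 a_6 vw$ again being nonpositive. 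Alternatively, one may test the $v$- and $w$-equations with $v^{p-1}$, $w^{p-1}$ and run an Alikakos--Moser iteration, the term $\chi_2(p-1)\ints v^{p-1}\gv\cdot\gz$ being absorbed by diffusion using $\gz\in\lis$ together with the logistic damping $-\mu_2\ints\vqpo$.

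The main obstacle is twofold and concentrated in Step 2. First, the taxis term appears in divergence form, so the governing semigroup estimate carries the singular kernel $\sigma^{-\frac12-\frac{n}{2r}}$; its borderline integrability is exactly why $\nabla z$ must be controlled in $\lis$ rather than merely in $\lqs$ and why $r$ must exceed $n$. Second, since $\tmax$ may be infinite, the estimates have to be uniform in time, which forces one either to exploit the artificial damping built into the shifted operator or to rely on the logistic dissipation $-\mu_2 v^2$; the favourable signs of the predation and interspecific loss terms are what let the reaction be absorbed despite the absence of any prior bound on $u$.
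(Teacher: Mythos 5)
Your proposal is correct and follows the same overall architecture as the paper: a Neumann heat semigroup bootstrap that first upgrades the $\lps$ bounds of Lemma \ref{l.3.1} to gradient control on $z$, and then feeds this into Duhamel representations of $v$ and $w$, with the unfavourable reaction terms discarded pointwise so that no a priori control of $u$ is needed. The execution differs in one substantive place. You choose $p>n$ in Lemma \ref{l.3.1} and obtain $\nabla z\in\lis$ outright, after which $\big\|v\gz\big\|_{\lrs}\leq\big\|\gz\big\|_{\lis}\nv_{\lrs}$ is uniformly bounded and the $\lis$ estimates for $v$ and $w$ close linearly in one pass. The paper instead only establishes $\nabla z$ in $\mathcal{L}^{rl}(\Omega)$ with $rl\leq q$ (see \eqref{l.2.11.1}), and must therefore interpolate $\big\|\wt\big\|_{\mathcal{L}^{\bar r l}}\leq\nwt^k_{\lis}\nwt^{1-k}_{\los}$ and close via the sublinear absorption $\nwt_{\lis}\leq C_1+C_2\sup\nwt^k_{\lis}$ with $k\in(0,1)$; it also treats $w$ first and then reuses $\nwt_{\lis}$ in the bound $v(1-v-a_3u+a_4w)\leq\tfrac{(1+a_4w)^2}{4}$, whereas you handle $v$ and $w$ symmetrically by dropping $-\mu_2v^2$ and estimating $vw$ in $\lrs$ from Lemma \ref{l.3.1}. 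Your variant is slightly more direct and avoids the fixed-point step; the paper's variant is the one that generalises when only a restricted range of $p$ is available in the $\lps$ lemma. Two small points you should make explicit if you write this up: the initial-slice term $\big\|\nabla e^{d_4(t-s_0)\Delta}z(\cdot,s_0)\big\|_{\lis}$ is controlled because $z(\cdot,s_0)\in\cts$ by local regularity, and the passage from the Duhamel identity to an upper bound after discarding nonpositive reaction terms uses the order-preserving property of the semigroup together with $v,w\geq0$, exactly as in the paper's use of \eqref{l.2.11.7}.
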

	\begin{proof}

We fix $p > \frac{n}{2}$. Specifically, if $n < 4$, we assume $p > \frac{4n}{n+4}$, noting that $\frac{4n}{n+4} > \frac{n}{2}$ in this case. Let us define
\begin{equation*}
	\frac{np}{(n-p)_+} =
	\begin{cases}
		\infty,  &\quad \text{if}\quad p \ge n,\\
		\frac{np}{n-p}, & \quad \text{if}\quad \frac{n}{2} < p < n.
	\end{cases}
\end{equation*}
We observe that if $n < 4$, then $\frac{4n}{n+4} < n$. Furthermore, for any $n$, it holds that $\frac{np}{(n-p)_+} > n$. Consequently, given this choice for $p$, we can select a $q < \frac{np}{(n-p)_+}$ such that $q > \max\{n,4\}$. Moreover, fix $l \in (1,q)$ such that $l>n$. It can be verified that it exists an $r>1$ such that
\begin{equation*}
	\max\{n,4\} \le rl \le q.
\end{equation*}	
	For any $s_0\in (0, \tmax)$ with $s_0<1$ , we fix arbitrary $t\in(s_0,\: \tmax)$. Using the variation of constants formula to the fourth equation of $\eqref{1.1}$, we get
	\begin{align*}
		\zt=e^{-\gamma(t-s_0)} e^{d_4(t-s_0)\Delta}\zi+\intts e^{-\gamma(t-s)}e^{d_4(t-s)\Delta}\Big(\alpha \vs+\beta \ws\Big)\ds.
	\end{align*}
	Then,
	\begin{align*}
		\ngzt_{\lros}\leq&e^{-\gamma(t-s_0)} \big\|\nabla e^{d_4(t-s_0)\Delta}\zi\big\|_{\lros}\\
		&+\intts  e^{-\gamma(t-s)}\Big\|\nabla e^{d_4(t-s)\Delta}\Big(\alpha \vs+\beta \ws\Big)\Big\|_{\lros}\ds.
	\end{align*}
	By using  smoothing property of the Neumann heat semigroup technique \cite{mwinkler2010} with $rl\leq q$, gives\\
	\begin{align*}
		\ngzt_{\lros}\leq&C_1e^{-\gamma(t-s_0)} \|\zi\|_{\wsq}\\
		&+C_2\intts e^{-\gamma(t-s)}\Big(1+(t-s)^{-\frac{1}{2}-\frac{n}{2}(\frac{1}{p}-\frac{1}{rl})}\Big) e^{-\lambda d_4(t-s)} \big\|\alpha \vs+\beta \ws\big\|_{\lps}\ds,
	\end{align*}
	where $C_1, C_2>0$ depending on $\Omega$. Using Gamma function with $C_3>0$, we get
	\begin{align*}
		\ngzt_{\lros}\leq &C_1e^{-\gamma(t-s_0)} \big\|\zi\big\|_{\wsq}+C_3 \sup\limits_{t\in(s_0, \tmax)}\big\|\vt+ \wt\big\|_{\lps}.
	\end{align*}
	Finally, we arrive at with the help of Lemma \ref{l.3.1}
	\begin{align}
		\ngzt_{\lros}\leq C_4, \qquad \forall\, t\in(s_0, \tmax),\label{l.2.11.1}
	\end{align}
	where $C_4>0$. 
	Let $t_0=\max\{s_0, t-1\}$ and applying the variation of constants formula to the third equation of \eqref{1.1}, we get
	\begin{align*}
		\wt=&e^{d_3(t-t_0)\Delta}w(\cdot,t_0)+\xi\intti e^{d_3(t-s)\Delta}\nabla\cdot\Big(\ws\nabla \zs\Big)\ds\\
		&+\muth\intti e^{d_3(t-s)\Delta}\ws\Big(1-\ws-a_5\us-a_6\vs\Big)\ds.
	\end{align*}
	Now, being $w \leq w^2 + \frac14$ for any $w$, after neglecting some nonpositive terms we get the following estimate
	\begin{align}
		w(1-w-a_5u-a_6v)\leq w(1-w)\leq  \frac{1}{4}.\label{l.2.11.7}
	\end{align}  
	Then, we have
	\begin{equation} \begin{split}
			\nwt_{\lis}{\leq}&\big\|e^{d_3(t-t_0)\Delta}w(\cdot,t_0)\big\|_{\lis}  \\
			&+\xi\intti \big\|e^{d_3(t-s)\Delta}\nabla\cdot\big(\ws\nabla \zs\big)\big\|_{\lis}\ds+\frac{\muth}{4}.\label{l.2.11.2}
	\end{split} \end{equation}
	If $t\leq 1$, {{then}} $t_0=s_0$ and the maximum principle gives
	\begin{align}
		\big\|e^{d_3(t-t_0)\Delta}w(\cdot,t_0)\big\|_{\lis}=\big\|e^{d_3(t-s_0)\Delta}w(\cdot,s_0)\big\|_{\lis}\leq \big\|w(\cdot,s_0)\big\|_{\lis}.\label{l.2.11.3}
	\end{align}
	If $t>1$, $t-t_0=1$ and using the Neumann heat semigroup technique \cite{mwinkler2010} with $C_5>0$ depending on $\Omega$,
	\begin{align}
		\big\|e^{d_3(t-t_0)\Delta}w(\cdot,t_0)\big\|_{\lis}\leq C_5\left({{1+}}(t-t_0)^{-\frac{n}{2}}\right)\big\|w(\cdot,t_0)\big\|_{\los}\leq C_6.\label{l.2.11.4}
	\end{align}
	Substituting \eqref{l.2.11.3} and \eqref{l.2.11.4} in \eqref{l.2.11.2} and using the Neumann heat semigroup technique \cite{mwinkler2010} with $C_7>0$ depending on $\Omega$, one has
	\begin{equation} \begin{split}
			\nwt_{\lis}\leq &\max\Big\{\big\|w(\cdot,s_0)\big\|_{\lis}, C_6\Big\} \\
			&+C_7\intti \Big(1+(t-s)^{-\frac{1}{2}-\frac{n}{2 l}}\Big)e^{-\lambda d_3(t-s)} \big\|\ws\nabla \zs\big\|_{\lms}\ds+\frac{\muth}{4}.
			\label{l.2.11.5}
	\end{split} \end{equation}
	Using {{the}} H\"older inequality and then \eqref{l.2.11.1}, we get
	\begin{equation} \begin{split}
			\big\|\wt\nabla \zt\big\|_{\lms}\leq& \big\|\wt\big\|_{\mathcal{L}^{\bar{r} l}}\big\|\nabla\zt\big\|_{\lros} \\
			\leq &\big\|\wt\big\|^k_{\lis}\big\|\wt\big\|^{1-k}_{\los}\big\|\nabla\zt\big\|_{\lros} \\
			\leq & C_8\big\|\wt\big\|^k_{\lis}, \qquad\qquad \forall\, t\in (s_0, \tmax),\label{l.2.11.6}
	\end{split} \end{equation}
	where $C_8=K_1C_4$ and $\bar{r}$ is the dual exponent of $r$, $k=1-\frac{1}{\bar{r}l}\in (0,1)$. 
	Substituting \eqref{l.2.11.6} and \eqref{l.2.11.7} into \eqref{l.2.11.5} and  using the  Gamma function with $C_9>0$, it follows that
	\begin{align*}
		\nwt_{\lis}\leq &\max\Big\{\big\|w(\cdot,s_0)\big\|_{\lis}, C_6\Big\}+C_9\sup\limits_{t\in(s_0, \tmax)}\nwt^k_{\lis}+\frac{\muth}{4}.
	\end{align*}
	Finally, we get
	\begin{align*}
		\nwt_{\lis}\leq C_{10}, \qquad \forall\, t\in (s_0, \tmax),
	\end{align*}
	where $C_{10}>0$. Now, considering the reaction term in the equation for $v$ in \eqref{1.1} and applying reasoning analogous to that in \eqref{l.2.11.7}, we get
	\begin{align*}
		v(1-v-a_3u+a_4w)\leq v(1+a_4w)-v^2 \leq \frac{(1+a_4w)^2}{4}.
	\end{align*} 
	An analogous procedure yields
	\begin{align}
			\nvt_{\lis} {{\leq}} &\big\|e^{d_2(t-t_0)\Delta}v(\cdot,t_0)\big\|_{\lis} + \chi_2\intti \big\|e^{d_2(t-s)\Delta}\nabla\cdot\big(\vs\nabla \zs\big)\big\|_{\lis}\ds\nonumber \\
			&+\mut\intti \big\|e^{d_2(t-s)\Delta}\vs\big(1-\vs-a_3\us+a_4\ws\big)\big\|_{\lis}\ds.\label{l.2.11.8}
	 \end{align}
	Applying the same procedure with $C_{11}, C_{12}>0$ one can get
	\begin{align*}
		\nvt_{\lis}\leq &\max\Big\{\big\|v(\cdot,s_0)\big\|_{\lis}, C_{11}\Big\}\\
		&+C_{12}\intti \Big(1+(t-s)^{-\frac{1}{2}-\frac{n}{2 l}}\Big)e^{-\lambda d_2(t-s)} \big\|\vs\nabla \zs\big\|_{\lms}\ds\\
		&+\mut \frac{(1+a_4C_{10})^2}{4}.
	\end{align*}
	Finally, we arrive at
	\begin{align}
		\nvt_{\lis}\leq C_{13}, \qquad \forall\, t\in (s_0, \tmax),\label{l.2.11.9}
	\end{align}
	where $C_{13}>0$. Initial data regularity $\zi\in \wsq$
	and the smoothing property of the heat semigroup ensure $z\in \mathcal{L}^\infty((s_0, \tmax); \wsq)$. Along with \eqref{l3.0}, this will imply that actually 
	\begin{align*}
		\sup\limits_{t\in(0,\tmax)}\left(\nvt_{\lis}+\nwt_{\lis}+\nzt_{\wsq}\right)\leq C_{14},
	\end{align*}
	where $C_{14}>0$. This completes the proof.
\end{proof}

\begin{lemma}\label{l.2.10}
	Suppose that the assumptions in Lemma \ref{l.2.6} hold true. There exists $K_5>0$ fulfilling
	\begin{align*}
		\intau \ints\big|\Delta\zt\big|^2\leq K_5,
	\end{align*}
	for all $t\in (0, \tmax-\tau)$.
\end{lemma}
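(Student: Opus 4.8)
The plan is to obtain the asserted space--time bound by a standard testing (energy) argument applied to the fourth equation of \eqref{1.1}, exploiting the uniform estimates already secured in Lemma \ref{l.2.6}: namely the $\lis$ bounds on $v$ and $w$ together with the $\wsq$ bound on $z$. Since the solution is classical on $(0,\tmax)$ by Lemma \ref{l.2.4}, the manipulations below are justified. I would multiply the equation $z_t=d_4\lz+\alpha v+\beta w-\gamma z$ by $-\lz$ and integrate over $\Omega$. Using the Neumann condition $\bc z=0$ (which also gives $\bc z_t=0$), integration by parts converts the time--derivative term into a gradient energy with no boundary contribution,
\begin{align*}
	-\ints z_t\,\lz=\ints\nabla z_t\cdot\gz=\frac12\dt\ints\mgz^2 .
\end{align*}

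This yields the identity
\begin{align*}
	\frac12\dt\ints\mgz^2+d_4\ints\mlz^2=-\ints(\alpha v+\beta w-\gamma z)\,\lz .
\end{align*}
I would then estimate the right-hand side by Young's inequality, absorbing half of the dissipative term,
\begin{align*}
	-\ints(\alpha v+\beta w-\gamma z)\,\lz\leq\frac{d_4}{2}\ints\mlz^2+\frac{1}{2d_4}\ints(\alpha v+\beta w-\gamma z)^2 .
\end{align*}
Because Lemma \ref{l.2.6} guarantees that $v,w,z$ are bounded in $\lis$ uniformly in $t\in(0,\tmax)$, the last integral is controlled by a fixed constant times $\momega$, so I arrive at a differential inequality of the form
\begin{align*}
	\dt\ints\mgz^2+d_4\ints\mlz^2\leq C_1 .
\end{align*}

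Integrating this over $(t,t+\tau)$ and dropping the nonnegative boundary value at $t+\tau$ gives
\begin{align*}
	d_4\intau\ints\mlz^2\leq C_1\tau+\ints|\gzt|^2 .
\end{align*}
The only point that requires care---and the nearest thing to an obstacle---is to control $\ints|\gzt|^2$ \emph{uniformly} in $t$, so that the bound does not degenerate as $t$ ranges over $(0,\tmax-\tau)$; this is exactly where the $t$-independent $\wsq$ estimate from Lemma \ref{l.2.6} is used. Since $q>\max\{2,n\}\geq2$ and $\Omega$ is bounded, the embedding $\lqs\hookrightarrow\lts$ yields $\ints|\gzt|^2\leq C\,\nzt_{\wsq}^2\leq C K_4^2$ for every $t$. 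Dividing by $d_4$ then furnishes the claim with $K_5:=d_4^{-1}\big(C_1\tau+CK_4^2\big)$, valid for all $t\in(0,\tmax-\tau)$.
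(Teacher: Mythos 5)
Your proposal is correct and follows essentially the same route as the paper: test the $z$-equation with $-\Delta z$, integrate by parts, absorb via Young's inequality using the uniform bounds from Lemma \ref{l.2.6}, and integrate the resulting differential inequality over $(t,t+\tau)$. The only cosmetic differences are that the paper keeps the $-2\gamma\ints\mgz^2$ term with its good sign rather than folding $\gamma z$ into the source, and you are more explicit about the (needed) uniform control of $\ints|\gzt|^2$ via the $\wsq$ bound, a step the paper leaves implicit.
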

\begin{proof}
	Multiplying the fourth equation in \eqref{1.1} with $-2\lz$ and integrating by parts, we obtain
	\begin{align*}
		\dt\ints \mgz^2\leq& -2d_4\ints \mlz^2+2\alpha\nv_{\lis}\ints\lz+2\beta\nw_{\lis}\ints\lz-2\gamma\ints\mgz^2.
	\end{align*}
	Simplifying one gets
	\begin{align*}
		\dt\ints \mgz^2+2\gamma\ints\mgz^2+2d_4\ints \mlz^2\leq 2\alpha K_4\ints\lz+2\beta K_4\ints\lz
		\leq d_4\ints\mlz^2+C_1.
	\end{align*}
	Finally,
	\begin{align}
		\dt\ints\mgz^2 +2\gamma\ints\mgz^2+d_4\ints \mlz^2\leq C_1,   \qquad \forall\, t\in (0, \tmax),\label{l.3.3.1}
	\end{align}
	where $C_1>0$. 
	Now, integrate \eqref{l.3.3.1} over $(t, t+\tau)$ to get
	\begin{align*}
		\intau \ints\big|\Delta\zt\big|^2\leq C_2, \qquad \forall\, t\in (0, \tmax-\tau),
	\end{align*}
	where $C_2>0$.	This completes the proof.
\end{proof}

\begin{lemma}\label{l.2.7}
	Suppose that $\Omega\subset\rsn (n\geq 1)$, is {{a bounded domain}} with smooth boundary and the assumptions in Lemma \ref{l.2.6} hold true. There exists $K_6>0$ fulfilling
	\begin{align}
		\ints|\nabla \vt|^2\leq K_6,
	\end{align}
	for all $t\in (0, \tmax)$.
\end{lemma}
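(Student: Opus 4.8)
The plan is to test the second equation of \eqref{1.1} against $-\Delta v$. Integrating by parts and invoking the homogeneous Neumann condition $\partial_\nu v=0$, the diffusion term produces the dissipation $-d_2\ints|\Delta v|^2$ while the time derivative becomes $\tfrac12\dt\ints|\nabla v|^2$, so that
\[ \tfrac12\dt\ints|\nabla v|^2+d_2\ints|\Delta v|^2=\chi_2\ints\Delta v\,\nabla\!\cdot(v\nabla z)-\mu_2\ints v(1-v-a_3u+a_4w)\,\Delta v. \]
My aim is to convert this into a differential inequality $y'(t)+c\,y(t)\le h(t)$ for $y(t):=\ints|\nabla v|^2$, where $h$ has uniformly bounded integrals over time windows of length $\tau$, and then to apply Lemma \ref{l.2.2}. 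The dissipation $d_2\ints|\Delta v|^2$ will serve as the reservoir into which the higher-order terms are absorbed via Young's inequality.

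The reaction term is routine: $|\mu_2\ints v(\cdots)\Delta v|\le\tfrac{d_2}{4}\ints|\Delta v|^2+C\ints v^2(1-v-a_3u+a_4w)^2$, and since $v,w$ are bounded in $\lis$ by Lemma \ref{l.2.6}, the last integral is dominated by $C(1+\ints u^2)$, which is window-integrable by \eqref{l.2.51.2}. Writing $\nabla\!\cdot(v\nabla z)=\nabla v\cdot\nabla z+v\Delta z$, the second piece is treated identically: $\chi_2\ints v\,\Delta v\,\Delta z\le\tfrac{d_2}{8}\ints|\Delta v|^2+C\|v\|_{\lis}^2\ints|\Delta z|^2$, and $\ints|\Delta z|^2$ is window-integrable by Lemma \ref{l.2.10}. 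Both therefore enter $h(t)$ with controlled window integrals.

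The essential difficulty is the remaining cross term $\chi_2\ints\Delta v\,(\nabla v\cdot\nabla z)$, which after Young's inequality produces $C\ints|\nabla v|^2|\nabla z|^2$. Here $\nabla z$ is only bounded in $\lqs$ (Lemma \ref{l.2.6}), not in $\lis$. By Hölder's inequality (using $q>2$), $\ints|\nabla v|^2|\nabla z|^2\le\|\nabla z\|_{\lqs}^2\,\|\nabla v\|_{\mathcal{L}^{2q/(q-2)}}^2$, and I estimate the last factor through the Gagliardo--Nirenberg inequality applied to the vector field $\nabla v$:
\[ \|\nabla v\|_{\mathcal{L}^{2q/(q-2)}}^2\le C\big(\|\Delta v\|_{\lts}+\|\nabla v\|_{\lts}+1\big)^{2\theta}\|\nabla v\|_{\lts}^{2(1-\theta)},\qquad \theta=\tfrac{n}{q}, \]
where I have used the elliptic regularity estimate $\|D^2v\|_{\lts}\le C(\|\Delta v\|_{\lts}+\|v\|_{\lts})$ for Neumann data together with the $\lis$-bound on $v$. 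The decisive point is that $q>n$ forces $\theta\in(0,1)$, so a final Young inequality with exponents $1/\theta$ and $1/(1-\theta)$ splits this into $\varepsilon\,\ints|\Delta v|^2$ (absorbed into the dissipation) plus $C_\varepsilon\ints|\nabla v|^2$ and constants. This is exactly the step where $q>n$ is indispensable: were $q\le n$, the interpolation exponent would exceed one and the absorption would fail. I expect this to be the main obstacle.

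Collecting the estimates leaves $\tfrac12\dt\ints|\nabla v|^2+\tfrac{d_2}{8}\ints|\Delta v|^2\le C\ints|\nabla v|^2+h_0(t)$ with $h_0(t):=C\ints|\Delta z|^2+C(1+\ints u^2)$ window-integrable. To restore coercivity in $y=\ints|\nabla v|^2$, I use $\ints|\nabla v|^2=-\ints v\,\Delta v\le\eta\ints|\Delta v|^2+C_\eta\ints v^2$: choosing $\eta$ small absorbs the $C\ints|\nabla v|^2$ term into $\tfrac{d_2}{8}\ints|\Delta v|^2$, and the same inequality (rearranged) converts a residual fraction of the dissipation into $+c\,y$. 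This yields $y'(t)+c\,y(t)\le h(t)$ with $\int_t^{t+\tau}h\le b$ uniformly, by Lemmas \ref{l.2.51} and \ref{l.2.10}. Lemma \ref{l.2.2}, applied on $(s_0,\tmax)$ with $y(s_0)$ finite (since $v(\cdot,s_0)\in\cts$), gives a uniform bound on $(s_0,\tmax)$; combined with the bound on $(0,s_0]$ furnished by \eqref{l3.1}, this establishes $\ints|\nabla v|^2\le K_6$ on all of $(0,\tmax)$.
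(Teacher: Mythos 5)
Your proposal is correct and follows essentially the same route as the paper's proof: test the $v$-equation with $-\Delta v$, absorb every higher-order term into the dissipation $d_2\ints|\Delta v|^2$ via Young and Gagliardo--Nirenberg, reduce to a differential inequality $y'+cy\le h$ with $h$ window-integrable thanks to \eqref{l.2.51.2} and Lemma \ref{l.2.10}, and close with Lemma \ref{l.2.2}. The only difference is cosmetic and concerns the cross term $\chi_2\ints \nabla v\cdot\nabla z\,\Delta v$: the paper estimates it by H\"older as $\big\|\nabla v\big\|_{\lfs}\big\|\nabla z\big\|_{\lfs}\big\|\Delta v\big\|_{\lts}$ together with the interpolation $\big\|\nabla v\big\|_{\lfs}^2\le C\big\|\Delta v\big\|_{\lts}\big\|v\big\|_{\lis}$ and the $\mathcal{L}^4$-bound on $\nabla z$ obtained in the proof of Lemma \ref{l.2.6}, whereas you pair $\big\|\nabla z\big\|_{\lqs}$ with $\big\|\nabla v\big\|_{\mathcal{L}^{2q/(q-2)}(\Omega)}$ and interpolate with exponent $n/q<1$ --- both variants absorb into the same dissipation and yield the same conclusion.
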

\begin{proof}
	Multiplying the second equation in \eqref{1.1} with $-2\Delta v$ and integrating by parts, we obtain
	\begin{align*}
		\dt\ints\mgv^2=&-2d_2\ints\mlv^2+2\chi_2\ints \nabla v \cdot\gz \lv+2\chi_2\ints v\lz\lv+2\mu_2\ints \mgv^2\\
		&-2\mu_2\ints \gv^2\cdot\gv+2\mu_2a_3\ints uv\lv-2\mu_2a_4\ints vw\lv\\
		\leq &-2d_2\ints\mlv^2+2\chi_2\ints \nabla v \cdot\gz {{\lv}}+2\chi_2\ints v{{\lz}}\lv+2\mu_2\ints \mgv^2-4\mu_2\ints v\mgv^2\\
		&+2\mu_2a_3\ints uv\lv-2\mu_2a_4\ints vw\lv.
	\end{align*}
	Rearranging the terms, we get
	\begin{align} 
			\dt\ints\mgv^2+2\mu_2\ints \mgv^2+2d_2\ints\mlv^2\leq &\:2\chi_2\ints \nabla v \cdot\gz \:\lv+2\chi_2\nv_{\lis}\ints \mlz\mlv\nonumber \\
			&+4\mu_2\ints \mgv^2+2\mu_2a_3\nv_{\lis}\ints u\mlv\nonumber \\
			&+2\mu_2a_4\nv_{\lis}\nw_{\lis}\ints \mlv.\label{l.2.5.2}
	\ \end{align}
	The H\"older inequality gives for the first term in RHS of \eqref{l.2.5.2},
	\begin{align}
		2\chi_2\ints \nabla v \cdot\gz\: \lv\leq 2 \chi_2 \ngv_{\lfs}\|\gz\|_{\lfs}\nlv_{\lts}.\label{l.2.5.3}
	\end{align}
	The Gagliardo-Nirenberg inequality with Lemma \ref{l.2.6} gives
	\begin{align}
		2	\chi_2 \ngv_{\lfs}\leq& C_1\nlv^\frac{1}{2}_{\lts}\nv^\frac{1}{2}_{\lis}
		\leq  C_2\nlv^\frac{1}{2}_{\lts},
		\label{l.2.5.4}
	\end{align}
	where $C_1, C_2>0$. Substituting the estimate \eqref{l.2.5.4} into \eqref{l.2.5.3} and using the boundedness of $\|\gz\|_{\lfs}$ from \eqref{l.2.11.1}, which holds due to the condition $rl\geq 4$, we then apply Young inequality to obtain
	\begin{align}
		2\chi_2\ints \gv\cdot\gz \lv\leq & C_2\nlv_{\lts}^\frac{3}{2}\|\gz\|_{\lfs}\leq \frac{2d_2}{5}\nlv^2_{\lts}+C_3,\label{l.2.5.5}
	\end{align}
	where $C_3>0$. Now using the  Young inequality for the second term in RHS of \eqref{l.2.5.2}, to get
	\begin{align}
		2\chi_2 K_4\ints \mlv\mlz\leq  \frac{2d_2}{5}\nlv^2_{\lts}+C_4\nlz^2_{\lts}, \label{l.2.5.6}
	\end{align}
	where $C_4>0$. Using the H\"older inequality and the Gagliardo-Nirenberg inequality for the third term in RHS of \eqref{l.2.5.2} with $C_5>0$, gives
	\begin{align}
		4\mu_2\ints\mgv^2\leq  {4\mu_2}\left(\ints\mgv^4\right)^\frac{1}{2}\momega^\frac12\leq  {4\mu_2}\momega^\frac12\ngv^2_{\lfs}\leq & \frac{2d_2}{5}\nlv^2_{\lts}+C_5. \label{l.2.5.7}
	\end{align}
	Once again Cauchy's inequality for the last two terms in \eqref{l.2.5.2}, gives
	\begin{align}
		2\mu_2a_3 K_4\ints u\mlv\leq \frac{2d_2}{5}\nlv^2_{\lts}+C_6\nru^2_{\lts} \label{l.2.5.8}
	\end{align}
  and
	\begin{align}
		2\mu_2a_4K_4^2\ints \mlv\leq \frac{2d_2}{5}\nlv^2_{\lts}+C_{7},\label{l.2.5.9}
	\end{align}
	where $C_6, C_7>0$. Substituting \eqref{l.2.5.5}-\eqref{l.2.5.9} in to \eqref{l.2.5.2} with $C_8>0$, one gets
	\begin{align*}
		\dt\ints\mgv^2+2\mu_2\ints \mgv^2\leq& C_{4}\nlz^2_{\lts}+C_6\nru^2_{\lts}+C_{8}, \qquad \forall\, t\in (0, \tmax).
	\end{align*}
	Now applying Lemma \ref{l.2.2}, we get
	\begin{align*}
		\ints \mgv^2\leq C_{9}, \qquad \forall\, t\in (0, \tmax),
	\end{align*}
	where $C_{9}>0$. This completes the proof.
\end{proof}

\begin{lemma}\label{l.2.8}
	Suppose that $\Omega\subset\rsn (n\geq 1)$, is {{a bounded domain}} with smooth boundary and the assumptions in Lemma \ref{l.2.6} hold true. There exists $K_7>0$ satisfying
	\begin{align}
		\ints |\nabla \wt|^2\leq K_7,
	\end{align}
	for all $t\in (0, \tmax)$.
\end{lemma}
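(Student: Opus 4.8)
The plan is to derive an $H^1$-type energy inequality for $w$ that mirrors the argument already carried out for $v$ in Lemma \ref{l.2.7}, the only structural differences being the sign and size of the taxis coefficient ($+\xi$ in place of $-\chi_2$) and the appearance of $a_5 uw$ and $a_6 vw$ rather than $a_3 uv$ and $a_4 vw$ in the reaction. First I would multiply the third equation of \eqref{1.1} by $-2\lw$ and integrate by parts over $\Omega$. Expanding the taxis term as $\nabla\cdot(w\gz)=\gw\cdot\gz+w\lz$, integrating the logistic part by parts — the contribution $-4\muth\ints w\mgw^2$ stemming from $-\muth w^2$ being nonpositive and hence discardable — and adding $2\muth\ints\mgw^2$ to both sides to expose the dissipation, I arrive at
\begin{align*}
\dt\ints\mgw^2+2\muth\ints\mgw^2+2d_3\ints\mlw^2 \leq & \: 2\xi\ints\mgw\,\mgz\,\mlw+2\xi\nw_{\lis}\ints\mlz\,\mlw \\
& +4\muth\ints\mgw^2+2\muth a_5\nw_{\lis}\ints u\,\mlw \\
& +2\muth a_6\nw_{\lis}\nv_{\lis}\ints\mlw.
\end{align*}

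The heart of the proof is then to absorb every right-hand term into the dissipation $2d_3\ints\mlw^2$, up to quantities whose time averages are controlled. For the taxis term I would use H\"older to bound it by $2\xi\ngw_{\lfs}\|\gz\|_{\lfs}\nlw_{\lts}$, then invoke the Gagliardo--Nirenberg inequality $\ngw_{\lfs}\leq C\nlw_{\lts}^{1/2}\nw_{\lis}^{1/2}$ together with the $\lfs$-bound on $\gz$ from \eqref{l.2.11.1} (available since $rl\geq 4$) and the $\lis$-bound on $w$ from Lemma \ref{l.2.6}; Young's inequality then produces $\tfrac{2d_3}{5}\nlw_{\lts}^2$ plus a constant. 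The $w\lz$ term is handled by Young, giving $\tfrac{2d_3}{5}\nlw_{\lts}^2+C\nlz^2_{\lts}$; the $4\muth\ints\mgw^2$ term by H\"older and Gagliardo--Nirenberg, giving $\tfrac{2d_3}{5}\nlw_{\lts}^2+C$; and the two reaction terms by Cauchy/Young, giving $\tfrac{2d_3}{5}\nlw_{\lts}^2+C\nru^2_{\lts}$ and $\tfrac{2d_3}{5}\nlw_{\lts}^2+C$ respectively. The five $\tfrac{2d_3}{5}\nlw_{\lts}^2$ contributions sum to exactly $2d_3\ints\mlw^2$ and cancel the dissipation on the left, leaving
\begin{align*}
\dt\ints\mgw^2+2\muth\ints\mgw^2\leq C_1\nlz^2_{\lts}+C_2\nru^2_{\lts}+C_3, \qquad \forall\,t\in(0,\tmax).
\end{align*}

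To close, I would appeal to Lemma \ref{l.2.2} with $a=2\muth$ and $h(t)=C_1\nlz^2_{\lts}+C_2\nru^2_{\lts}+C_3$: this $h$ is nonnegative and its integral over $(t,t+\tau)$ is finite, since $\intau\ints\mlz^2$ is bounded by Lemma \ref{l.2.10}, $\intau\ints u^2$ is bounded by Lemma \ref{l.2.51}, and the constant term contributes $C_3\tau$. Lemma \ref{l.2.2} then yields the uniform bound $\ints\mgw^2\leq K_7$ for all $t\in(0,\tmax)$.

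I expect the main obstacle to be the taxis gradient term $2\xi\ints\mgw\,\mgz\,\mlw$, whose control rests on correctly interleaving Gagliardo--Nirenberg interpolation (to trade $\ngw_{\lfs}$ for $\nlw_{\lts}^{1/2}$) with the previously obtained $\lfs$-regularity of $\gz$, and on arranging the bookkeeping so that the accumulated coefficient of $\nlw_{\lts}^2$ stays strictly below $2d_3$. A secondary subtlety is that at this stage no $\lis$-bound on the secondary predator $u$ is yet available, so $2\muth a_5\nw_{\lis}\ints u\,\mlw$ cannot be estimated pointwise; one must instead retain it in the $\nru^2_{\lts}$ form and rely on the space--time $\lts$-estimate \eqref{l.2.51.2} of Lemma \ref{l.2.51} when applying Lemma \ref{l.2.2}.
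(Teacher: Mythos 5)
Your proposal is correct and follows essentially the same route as the paper: testing with $-2\Delta w$, absorbing each right-hand term into $\tfrac{2d_3}{5}\|\Delta w\|_{\mathcal{L}^2(\Omega)}^2$ via H\"older, Gagliardo--Nirenberg, the $\mathcal{L}^4$-bound on $\nabla z$, and Young/Cauchy, then closing with Lemma \ref{l.2.2} using Lemmas \ref{l.2.10} and \ref{l.2.51}. Your handling of the $u$-term (retaining $\|u\|_{\mathcal{L}^2(\Omega)}^2$ and relying on the space--time bound rather than a pointwise one) matches the paper exactly, and your reaction-term coefficients $a_5,a_6$ are in fact the correct ones for the $w$-equation.
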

\begin{proof}
	Multiplying the third equation in \eqref{1.1} with $-2\Delta w$ and integrating by parts, we obtain
	\begin{equation} \begin{split}
			\dt\ints\mgw^2+2\mu_3\ints \mgw^2+2d_2\ints\mlw^2\leq&\: 2\xi\ints \gw\cdot\gz \lw+2\xi\nw_{\lis}\ints \mlz\mlw \\
			&+4\mu_3\ints \mgw^2-4\mu_3\ints w\mgw^2 \\
			&+2\mu_3a_4\nw_{\lis}\ints u\mlw \\
			&+2\mu_3a_5\nv_{\lis}\nw_{\lis}\ints\mlw.\label{l.2.6.2}
	\end{split} \end{equation}
	Following a similar procedure as in \eqref{l.2.5.5} with $C_1, C_2>0$, we obtain 
	\begin{align}
		2\xi\ints \gw\cdot\gz\lw\leq &  C_1\nlw_{\lts}^\frac{3}{2}\|\gz\|_{\lfs}\leq \frac{2d_3}{5}\nlw^2_{\lts}+C_2.\label{l.2.6.3}
	\end{align}
	Applying the Young inequality for the second term of \eqref{l.2.6.2} gives
	\begin{align}
		2\xi K_4\ints \mlw\mlz\leq \frac{2d_3}{5}\nlw^2_{\lts}+C_3\nlz^2_{\lts}. \label{l.2.6.4}
	\end{align}
	Using the H\"older inequality and the Gagliardo-Nirenberg inequality to the third from of \eqref{l.2.6.2}, gives
	\begin{align}
		4\mu_3\ints\mgw^2\leq  {4\mu_3} \left(\ints\mgw^4\right)^\frac{1}{2} \momega^\frac12 \leq 4\mu_3 \momega^\frac12\ngw^2_{\lfs}\leq & \frac{2d_3}{5}\nlw^2_{\lts}+C_4, \label{l.2.6.5}
	\end{align}
	where $C_4>0$. The Cauchy inequality gives
	\begin{align}
		2\mu_3a_4 K_4\ints u\mlw\leq \frac{2d_3}{5}\nlw^2_{\lts}+C_5\nru^2_{\lts}, \label{l.2.6.6}
	\end{align}
	and
	\begin{align}
		2\mu_3a_5 K_4^2\ints \mlw\leq \frac{2d_3}{5}\nlw^2_{\lts}+C_6, \label{l.2.6.7}
	\end{align}
	where $C_6>0$. Substituting \eqref{l.2.6.3}-\eqref{l.2.6.7} into \eqref{l.2.6.2} and simplifying, we get
	\begin{align*}
		\dt\ints\mgw^2+2\mu_3\ints \mgw^2\leq & C_3\nlz^2_{\lts}+C_5\nru^2_{\lts}+C_{7},  \quad \forall\, t\in (0, \tmax),
	\end{align*}
	where $C_7>0$. Now applying Lemma \ref{l.2.2}, we get
	\begin{align*}
		\ints \mgw^2\leq C_{8}, \qquad \forall\, t\in (0, \tmax),
	\end{align*}
	where $C_{8}>0$. This completes the proof.
\end{proof}

\begin{lemma}\label{l.3.2}
	Suppose that $\Omega\subset\rsn (n\geq 1)$, is { {a bounded domain}} with smooth boundary and the assumptions in Lemma \ref{l.2.6} hold true. There exists $K_8, K_{9}>0$ satisfying
	\begin{align}
		\sup\limits_{t\in(0, \tmax)}\left(\big\|\nabla \vt\big\|_{\lis}+\big\|\nabla \wt\big\|_{\lis}\right) \leq K_8, \label{l.3.2.2}
	\end{align}
	and
	\begin{align}
		\sup\limits_{t\in(0, \tmax)}\nut_{\lis}\leq K_{9}.\label{l.3.2.1}
	\end{align}
\end{lemma}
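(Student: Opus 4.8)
The plan is to bootstrap the regularity already assembled in Lemmas~\ref{l.2.6}--\ref{l.2.8} up to the $L^\infty$--gradient level for $v,w$ and to an $L^\infty$ bound for $u$, thereby ruling out the alternative in the extensibility criterion \eqref{l1.2}. The natural first move is to upgrade the chemical gradient. Since $v,w,z$ are already bounded in $\lis$ by Lemma~\ref{l.2.6}, the source $\alpha v+\beta w-\gamma z$ of the fourth equation lies in $\lis$ uniformly in $t$; representing $z$ through the variation-of-constants formula and applying the Neumann heat-semigroup estimate $\|\nabla e^{\tau\Delta}g\|_{\lis}\le C(1+\tau^{-1/2})e^{-\lambda\tau}\|g\|_{\lis}$, the singularity $\tau^{-1/2}$ is locally integrable and integration yields a uniform bound $\sup_{t}\|\nabla z(\cdot,t)\|_{\lis}\le C$. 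This is the decisive gain: it turns the chemotactic drifts $-\chi_2\nabla z$ and $+\xi\nabla z$ in the $v$- and $w$-equations into genuine $\lis$ vector fields.

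Second, I treat the $v$- and $w$-equations as scalar drift-diffusion equations with these bounded drifts; the logistic nonlinearities are bounded from above exactly as in \eqref{l.2.11.7}, and the only factor not yet under pointwise control is the $u$ sitting inside $\mu_2 v(1-v-a_3u+a_4w)$. I would therefore run a finite bootstrap that raises the integrability of $u$ and of $\nabla v,\nabla w$ \emph{simultaneously}. Retaining a fraction of the dissipation in the computations of Lemmas~\ref{l.2.7}--\ref{l.2.8} supplies the space--time bounds $\int_t^{t+\tau}\int_\Omega(|\Delta v|^2+|\Delta w|^2)\le C$, which combine with $\nabla v,\nabla w\in L^\infty_t(\lts)$ through a parabolic Gagliardo--Nirenberg interpolation to place $\nabla v,\nabla w$ in $L^{p_\ast}_{t,x}$ for some $p_\ast>2$, hence $\nabla(vw)\in L^{p_\ast}_{t,x}$. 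Feeding this into a testing estimate for $\int_\Omega u^p$ --- in which the logistic absorption $-\mu_1\int_\Omega u^{p+1}$ dominates the alarm-taxis term $\chi_1(p-1)\int_\Omega u^{p-1}\nabla u\cdot\nabla(vw)$ after a Young splitting --- raises $u$ to a higher $L^{p}_{t,x}$. Once $u$ reaches $L^{\rho}_{t,x}$ with $\rho>\tfrac{n+2}{2}$, the divergence-form $v,w$-equations have bounded leading and first-order coefficients and a source in $L^\rho_{t,x}$, so De Giorgi--Nash--Moser theory gives $v,w\in C^{\alpha}$. A Schauder estimate for the $z$-equation then promotes $z$ to $C^{2+\alpha}$, so $\nabla z,\Delta z$ are Hölder continuous; reinserting them as coefficients in the uniformly parabolic, now $C^\alpha$-coefficient, equations for $v$ and $w$ and applying parabolic Schauder theory delivers $v,w\in C^{2+\alpha}$ and in particular $\sup_t(\|\nabla v(\cdot,t)\|_{\lis}+\|\nabla w(\cdot,t)\|_{\lis})\le K_8$.

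Finally, with $\nabla v,\nabla w\in\lis$ the potential $\nabla(vw)=w\nabla v+v\nabla w$ is bounded, so the alarm-taxis flux $u\nabla(vw)$ is controlled by $u$ alone, and its reaction term is bounded from above just as for $w$ in \eqref{l.2.11.7}. For the $u$-equation I would then either iterate the $L^p$ testing estimate, where $-\mu_1\int_\Omega u^{p+1}$ absorbs $\tfrac{\chi_1^2(p-1)}{2d_1}\|\nabla(vw)\|_{\lis}^2\int_\Omega u^p$ after Young's inequality, producing uniform $\int_\Omega u^p\le C_p$ for every $p$ and then $\|u(\cdot,t)\|_{\lis}\le K_9$ by Moser iteration; or use the variation-of-constants formula with $\|e^{\tau\Delta}\nabla\cdot F\|_{\lis}\le C(1+\tau^{-1/2-n/(2q_0)})e^{-\lambda\tau}\|F\|_{L^{q_0}}$ for some $q_0>n$ and $F=u\nabla(vw)$, the singularity now being integrable since $q_0>n$. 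Either route closes the $L^\infty$ bound for $u$.

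I expect the main obstacle to be the coupled, ``supercritical'' nature of the two divergence terms $\nabla\cdot(u\nabla(vw))$ and $\nabla\cdot(v\nabla z)$: each effectively carries two spatial derivatives of a taxis potential, so one pass of the heat semigroup at the $L^\infty$ level produces the non-integrable singularity $\tau^{-1}$, forbidding any one-step estimate. This is exactly what forces the detour through Hölder/Schauder theory for the gradient bounds and, more seriously, the interlaced bootstrap that must improve the integrability of $u$ and of $\nabla v,\nabla w$ together, since $u$ enters the $v,w$-reactions while $\nabla(vw)$ drives $u$. The largeness hypothesis $\min\{\mu_2,\mu_3\}>\mu$ inherited from Lemma~\ref{l.3.1} enters precisely here, ensuring that the logistic absorption out-competes the taxis cross-terms at each stage of the iteration.
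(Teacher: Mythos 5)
Your overall architecture (first $\nabla z$, then $\nabla v,\nabla w$, then $u$) matches the paper's, and your first and last stages are sound — the final absorption for $u$ via $\|u\nabla(vw)\|_{\lms}\leq C\nut^{k}_{\lis}$ is essentially identical to the paper's. The middle stage, however, is where the lemma lives, and there your plan has two concrete gaps. First, the ``interlaced bootstrap'' is asserted but does not visibly close. With the ingredients you cite, retaining the dissipation in Lemmas \ref{l.2.7}--\ref{l.2.8} gives $\nabla v,\nabla w\in L^{2(n+2)/n}_{t,x}$ at best, while absorbing $\int_\Omega u^{p}|\nabla(vw)|^{2}$ into $-\mu_1\int_\Omega u^{p+1}$ by Young's inequality requires $\nabla(vw)\in L^{2(p+1)}_{t,x}$; matching exponents yields only $u\in L^{(n+2)/n}_{t,x}$, which for $n\geq 2$ is no better than \eqref{l.2.51.2}. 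Any genuine gain must interleave the dissipation $\int_\Omega u^{p-2}|\nabla u|^2$ with improved maximal regularity for $v,w$ (whose sources themselves contain $u$), and you give no exponent bookkeeping showing this terminates for general $n$. Second, the Schauder step is inapplicable as stated: even with $v,w\in C^{\alpha}$ and $z\in C^{2+\alpha}$, the reactions of the $v$- and $w$-equations contain $-\mu_2a_3uv$ and $-\mu_3a_5uw$ with $u$ only in some $L^{\rho}_{t,x}$, so parabolic Schauder cannot deliver $v,w\in C^{2+\alpha}$. You would instead need $W^{2,1}_{\rho}$ maximal regularity with $\rho>n+2$ and a Sobolev embedding to reach $\nabla v,\nabla w\in\lis$, which raises the bar on the bootstrap still further.

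For contrast, the paper never passes through H\"older theory or through high integrability of $u$. It estimates $\|\nabla \wt\|_{\lis}$ directly from the Duhamel formula by splitting $\nabla\cdot(w\nabla z)=\nabla w\cdot\nabla z+w\Delta z$, bounding $\nabla z$ in $\lros$ via \eqref{l.2.11.1}, bounding $\intts\nlz^{l}_{\lms}$ via maximal Sobolev regularity (Lemma \ref{l.2.3}) together with Lemma \ref{l.3.1}, and closing with the interpolation $\big\|\nabla \wt\big\|_{L^{\bar rl}}\leq\big\|\nabla \wt\big\|^{k}_{\lis}\big\|\nabla \wt\big\|^{1-k}_{\los}$ followed by absorption of $\sup\big\|\nabla \wt\big\|^{k}_{\lis}$, $k<1$; the $u$-dependent parts of the reactions are simply discarded before applying $\nabla e^{\tau\Delta}$ (a step that itself deserves care, since $\nabla e^{\tau\Delta}$ is not order-preserving — your instinct that $u$ must be controlled there is not unfounded, but your proposed remedy is not carried far enough to count as a proof). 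To repair your write-up with the least effort, adopt the paper's interpolation--absorption argument for the gradient bounds and keep your final paragraph for $u$; otherwise you must supply the full exponent iteration and replace Schauder by $L^{p}$ maximal regularity.
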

\begin{proof}
	Let us use the variables $r, \bar{r}, l$, and $k$ as defined in the proof of Lemma \ref{l.2.6}, $t\in(s_0, \tmax)$ and $\tin=\max\{s_0, t-1\}$. Applying the variation of constants formula to the third equation of \eqref{1.1}, we get
	\begin{align*}
		\|\nabla \wt\|_{\lis}\leq &\big\|\nabla e^{d_3(t-s_0)\Delta}w(\cdot,s_0)\big\|_{\lis}+\xi\intts \big\|\nabla e^{d_3(t-s)\Delta}\nabla\cdot\big(\ws\nabla \zs\big)\big\|_{\lis}\ds \\
		&+\muth\intts \big\|\nabla e^{d_3(t-s)\Delta}\ws\big\|_{\lis}\ds.
	\end{align*}
	By Neumann heat semigroup with $C_1, C_2, C_3, C_4>0$  depending on $\Omega$, we obtain 
	\begin{equation} \begin{split}
			\|\nabla \wt\|_{\lis}\leq &C_{1}\left(1+(t-s_0)^{-\frac12}\right)\big\|w_0\big\|_{\lis} \\
			&+C_{2}\intts \Big(1+(t-s)^{-\frac{1}{2}-\frac{n}{2 l}}\Big)e^{-\lambda d_3(t-s)} \big\|\nabla \ws\cdot\nabla \zs\big\|_{\lms}\ds \\
			&+C_{3}\intts \Big(1+(t-s)^{-\frac{1}{2}-\frac{n}{2 l}}\Big)e^{-\lambda d_3(t-s)} \big\|\ws\big\|_{\lis}\big\|\Delta \zs\big\|_{\lms}\ds \\
			&+C_{4}\intts \Big(1+(t-s)^{-\frac{1}{2}}\Big)e^{-\lambda d_3(t-s)}\big\|\ws \big\|_{\lis}\ds.\label{l.3.2.5}
	\end{split} \end{equation}
	Now using similar procedure as in \eqref{l.2.11.6} and since $\Omega$ is bounded, $\nabla w\in \lts$ implies $\nabla w \in \los$. Therefore,
	\begin{equation} \begin{split}
			\big\|\nabla \wt\cdot\nabla \zt\big\|_{\lms}\leq& \big\|\nabla \wt\big\|_{L^{\bar{r} l}}\big\|\nabla\zt\big\|_{\lros} \\
			\leq &\big\|\nabla \wt\big\|^k_{\lis}\big\|\nabla \wt\big\|^{1-k}_{\los}\big\|\nabla\zt\big\|_{\lros} \\
			\leq & C_{5}\big\|\nabla \wt\big\|^k_{\lis}, \qquad\quad \forall\, t\in (s_0, \tmax).\label{l.3.2.6}
	\end{split} \end{equation}
	Substituting  \eqref{l.3.2.6} into \eqref{l.3.2.5} and using {{the}} Young inequality (see Lemma 3.9 \cite{sli}) with Lemma \ref{l.2.6} gives
	\begin{align} 
		\|\nabla \wt\|_{\lis}\leq &C_{6}+C_{7}\!\sup\limits_{t\in(s_0, \tmax)}\!\big\|\nabla \wt\big\|^k_{\lis}\intts \Big(1+(t-s)^{-\frac{1}{2}-\frac{n}{2 l}}\Big)e^{-\lambda d_3(t-s)}\ds \nonumber \\
		&+C_{8}\intts \Big(1+(t-s)^{-\frac{1}{2}-\frac{n}{2 l}}\Big)^\frac{l}{l-1}e^{-\frac{\lambda d_3l}{l-1}(t-s)}\ds+C_9\intts \nlz_{\lms}^l \label{l.3.2.7}\\
		&+C_{10}\intts \Big(1+(t-s)^{-\frac{1}{2}-\frac{n}{2 l}}\Big)e^{-\lambda d_3(t-s)}\ds. \nonumber 
	\end{align}
	There exits $C_l>0$, the maximal Sobolev regularity for $\big\|\Delta z\big\|_{\lms}^l$ ($l>n$) along with Lemma \ref{l.3.1} gives
	\begin{align*}
		\intts \nlz_{\lms}^l\leq & C_l\intts \big\|\alpha v+\beta w\big\|_{\lms}^l+C_l \big\|z(\cdot, s_0)\big\|_{\lms}^l+C_l \big\|\Delta z(\cdot, s_0)\big\|_{\lms}^l\\
		\leq & C_{11}\intts \nv_{\lms}^l+C_{12} \intts \nw_{\lms}^l+C_{13}\\
		\leq & C_{14}\qquad \forall t\in (s_0, \tmax).
	\end{align*}
    where $C_{14}>0$. Now using the Gamma function with $C_{15}, C_{16}>0$, one gets from \eqref{l.3.2.7}
	\begin{align*}
		\|\nabla \wt\|_{\lis}\leq &C_{15}\sup\limits_{t\in(s_0, \tmax)}\big\|\nabla \wt\big\|^k_{\lis}+C_{16}.
	\end{align*}
	Finally, we arrive at
	\begin{align*}
		\|\nabla \wt\|_{\lis}\leq &C_{17},  \qquad\quad \forall\, t\in (s_0, \tmax),
	\end{align*}
	where $C_{17}>0$. By using the similar procedure, one can obtain from \eqref{l.2.11.8}
	\begin{align*}
		\Big\|\nabla \vt\Big\|_{\lis}\leq &\big\|\nabla e^{d_2(t-s_0)\Delta}v(\cdot,s_0)\big\|_{\lis}+\chi_2\intts \big\|\nabla e^{d_2(t-s)\Delta}\nabla\vs \cdot\nabla \zs\big)\big\|_{\lis}\ds\\
		&+\chi_2\intts \big\|\nabla e^{d_2(t-s)\Delta}\vs\:\Delta \zs\big)\big\|_{\lis}\ds\\
		&+\mut\intts \big\|\nabla e^{d_2(t-s)\Delta}\vs \big(1+a_4\ws\big)\big\|_{\lis}\ds.
	\end{align*}
	Using the Neumann heat semigroup technique, then consider the last term from the above estimate  gives
	\begin{align*}
		\big\|\vt \big(1+a_4\wt\big)\big\|_{\lis}\leq &\:C_{18},
	\end{align*}
	where $C_{18}>0$. Finally, by a similar manner we arrive at 
	\begin{align*}
		\|\nabla \vt\|_{\lis}\leq &C_{19},  \qquad\quad \forall\, t\in (s_0, \tmax),
	\end{align*}
	where $C_{19}>0$. Now, applying the variation of constants formula to the first equation of \eqref{1.1}, we get
	\begin{align*}
		\ut=&e^{d_1(t-t_0)\Delta}u(\cdot,t_0){-}\chi_1\intti e^{d_1(t-s)\Delta}\nabla\cdot\Big(\us\nabla(\vs\ws)\Big)\ds\\
		&+\muo\intti e^{d_1(t-s)\Delta}\us\Big(1-\us+a_1\vs+a_2\ws\Big)\ds.
	\end{align*}
	Taking $\lis$ norm on both sides, one has
	\begin{align*}
		\nut_{\lis}{\leq}&\|e^{d_1(t-t_0)\Delta}u(\cdot,t_0)\|_{\lis}+\chi_1\intti \|e^{d_1(t-s)\Delta}\nabla\cdot\Big(\us\nabla(\vs\ws)\Big)\|_{\lis}\ds\\
		&+\muo\intti \|e^{d_1(t-s)\Delta}\us\Big(1-\us+a_1\vs+a_2\ws\Big)\|_{\lis}\ds.
	\end{align*}
	By the similar procedure used in the Lemma \ref{l.2.6}, we obtain
	\begin{equation} \begin{split}
			\nut_{\lis}\leq &\max\Big\{\|u(\cdot,s_0)\|_{\lis}, C_{20}\Big\} \\
			+&C_{21}\intti \left(1+(t-s)^{-\frac{1}{2}-{\frac{n}{2 l}}}\right) e^{-\lambda d_1(t-s)}\Big\|\us\nabla(\vs\ws)\Big\|_{\lms}\ds \\
			+&\muo\frac{(1+a_1 K_4+a_2 K_4)^2}{4},\label{l.3.2.3}
	\end{split} \end{equation}
	where $C_{21}>0$. Using the H\"older inequality, Lemma \ref{l.2.6} and then using Interpolation inequality, we obtain
	\begin{align}
			\Big\|\ut\nabla(\vt\wt)\Big\|_{\lms}
			{\leq}&\:\Big\|\ut\vt\nabla\wt\Big\|_{\lms}+\Big\|\ut\wt\nabla\vt\Big\|_{\lms} \nonumber\\
			\leq &\: \Big\|\vt\Big\|_{\lis}\:\Big\|\nabla\wt\Big\|_{\lis}\:\big\|\ut\Big\|_{\lms}\nonumber \\
			&+\Big\|\wt\Big\|_{\lis}\:\Big\|\nabla\vt\Big\|_{\lis}\:\Big\|\ut\Big\|_{\lms}\nonumber \\
			\leq &\: C_{22}\big\|\ut\big\|_{\lis}^\frac{1}{l}\: \big\|\ut\big\|_{\los}^\frac{l}{l-1}\nonumber \\
			\leq &\: C_{23} \big\|\ut\big\|_{\lis}^\frac{1}{l}, \qquad\qquad\forall\, t\in (s_0, \tmax).\label{l.3.2.4}
	\end{align}
	Substitute \eqref{l.3.2.4} into \eqref{l.3.2.3} and using Gamma function with $C_{25}>0$, we arrive at
	\begin{align*}
		\nut_{\lis}\leq &C_{24} \big\|\ut\big\|_{\lis}^\frac{1}{l}+C_{25}.
	\end{align*}
	Finally, we get
	\begin{align*}
		\nut_{\lis}\leq C_{26}, \qquad \forall\, t\in (s_0, \tmax),
	\end{align*}
	where $C_{26}>0$.  The proof is completed by taking into account \eqref{l3.1}.
\end{proof}

{\bf Proof of Theorem \ref{t.1}.}
We argue the proof by contradiction. Assume that $\tmax<\infty$. By Lemma \ref{l.2.6} and \ref{l.3.2}, the bounds on the solution components hold for all $t\in (0, \tmax)$, which contradicts the blow-up criterion in \eqref{l1.2}. Hence, $\tmax=\infty$ and we conclude that
\begin{align*}
	\sup\limits_{t>0}\left(\nut_{\lis}+\nvt_{\wsin}+\nwt_{\wsin}+ \nzt_{\wsq}\right)<\infty.
\end{align*}
This completes the proof.

\section{Global asymptotic stability}
\quad In this section, we analyze the global asymptotic behaviour of solutions for the system \eqref{1.1}, using the Lyapunov functional. The proofs of these asymptotic results are based on the work presented in \cite{xbai}. The proof of the following lemma is adapted from \cite{hyjin}.
\begin{lemma}\label{l.4.1}
	Let $(u, v, w, z)$ be the nonnegative classical solution of the system \eqref{1.1} and suppose that the assumptions of Theorem 1.1 hold true. Then there exists $\theta\in(0, 1)$ and $C>0$ such that
	\begin{align*}
		\Big\|u(\cdot, t)\Big\|_{\mathcal{C}^{2+\theta, 1+ \frac{\theta}{2}}(\overline{\Omega}\times[t, t+1])}+\Big\|v(\cdot, t)\Big\|_{\mathcal{C}^{2+\theta, 1+ \frac{\theta}{2}}(\overline{\Omega}\times[t, t+1])}\leq C
	\end{align*}
	and
	\begin{align*}
		\Big\|w(\cdot, t)\Big\|_{\mathcal{C}^{2+\theta, 1+ \frac{\theta}{2}}(\overline{\Omega}\times[t, t+1])}+\Big\|z(\cdot, t)\Big\|_{\mathcal{C}^{2+\theta, 1+ \frac{\theta}{2}}(\overline{\Omega}\times[t, t+1])}\leq C,
	\end{align*}
	for all $t\geq 1$.
\end{lemma}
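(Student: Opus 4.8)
The plan is to upgrade the uniform $L^\infty$ and $W^{1,\infty}$ bounds furnished by Theorem \ref{t.1} to uniform parabolic Hölder bounds of class $C^{2+\theta,1+\theta/2}$ by a finite bootstrap combining parabolic $L^p$-regularity, the parabolic Sobolev embedding, and Schauder estimates. The whole argument is run on the unit time slabs $\overline{\Omega}\times[t,t+1]$: because the estimates on a unit window are translation invariant in time while all the data are controlled by the $t$-independent bounds of Theorem \ref{t.1}, every constant produced is independent of $t\geq 1$, which is exactly what the statement requires.

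First I would record the consequence of Theorem \ref{t.1}, namely $\sup_{t>0}\bigl(\nut_{\lis}+\nvt_{\wsin}+\nwt_{\wsin}+\nzt_{\wsq}\bigr)<\infty$. These bounds force the reaction terms and the taxis drifts to lie in fixed function spaces on every window. Treating each scalar equation of \eqref{1.1} as a linear Neumann parabolic equation with such controlled coefficients and right-hand sides, I apply interior--boundary parabolic $L^p$ estimates (in the spirit of Lemma \ref{l.2.3}) on $(t,t+1)$ followed by the embedding $W^{2,1}_p\hookrightarrow C^{1+\theta_0,(1+\theta_0)/2}$ for $p>n+2$. This produces some $\theta_0\in(0,1)$ for which $u,v,w,z$ and their spatial gradients are bounded in $C^{\theta_0,\theta_0/2}(\overline{\Omega}\times[t,t+1])$ uniformly in $t$; the smoothness of $\partial\Omega$ is what licenses the boundary version of these estimates.

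The key structural observation is that, once all four components are Hölder continuous, the coupling becomes \emph{triangular}, so the promotion to $C^{2+\theta,1+\theta/2}$ closes without circularity in the order $z\to(v,w)\to u$. Indeed, in the fourth equation the inhomogeneity $\alpha v+\beta w-\gamma z$ now lies in $C^{\theta_0,\theta_0/2}$, so the parabolic Schauder estimate gives $z\in C^{2+\theta,1+\theta/2}$ with $\theta=\theta_0$. Writing the chemotaxis term in the second equation as $-\chi_2(\nabla v\cdot\nabla z+v\,\Delta z)$, every coefficient and the reaction term are then Hölder (using $z\in C^{2+\theta}$ together with $u,v,w\in C^{\theta_0}$), so Schauder yields $v\in C^{2+\theta,1+\theta/2}$, and identically $w\in C^{2+\theta,1+\theta/2}$ from the third equation. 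Finally, expanding the alarm-taxis term in the first equation as $-\chi_1(\nabla u\cdot\nabla(vw)+u\,\Delta(vw))$ and using $\Delta(vw)=w\Delta v+2\nabla v\cdot\nabla w+v\Delta w$, the quantity $\Delta(vw)$ is Hölder precisely because $v,w\in C^{2+\theta}$; a last Schauder estimate then gives $u\in C^{2+\theta,1+\theta/2}$.

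The main obstacle is the highest-order coupling carried by the alarm-taxis flux $u\nabla(vw)$, which injects the \emph{second} derivatives of $v$ and $w$ into the $u$-equation; this is what dictates the ordering $z\to(v,w)\to u$ and forces $z,v,w$ to reach $C^{2+\theta}$ before $u$ can be treated. Accordingly, the genuinely delicate step is the first phase, where one must pass from the mere $W^{1,q}$ control of $z$ to honest Hölder continuity of $\nabla z$, secure the Hölder exponents uniformly in $t$ on the moving windows, and accommodate the Neumann boundary. Once a common Hölder class is reached, each Schauder step is routine, and the four bounds in the statement follow by summation. This is the argument of Jin et al.\ \cite{hyjin}, adapted to system \eqref{1.1} and consistent with \cite{xbai}.
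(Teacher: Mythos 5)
Your proposal is correct and follows essentially the same route as the paper: the paper's proof of this lemma consists only of invoking ``standard parabolic regularity theory'' from Ladyzenskaja--Solonnikov--Uralceva together with the uniform bounds of Lemmas \ref{l.2.6} and \ref{l.3.2} (pointing to \cite{tli, mmporzio} for details), and your bootstrap --- uniform $L^\infty$/$W^{1,\infty}$ bounds, then parabolic $L^p$ plus embedding to reach a common H\"older class, then Schauder in the order $z\to(v,w)\to u$ on unit time slabs --- is precisely the standard argument being cited. Your explicit handling of the triangular structure forced by the second-order coupling in $u\nabla(vw)$ is a useful elaboration the paper omits.
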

\begin{proof}
	The proof is based on the standard parabolic regularity theory in \cite{ladyzen} and Lemma \ref{l.2.6} and \ref{l.3.2}. For more details see \cite{tli, mmporzio}.
\end{proof}

By Lemma \ref{l.4.1}, it is clear that $u, v, w, z$ are continuous and bounded in $\overline{\Omega} \times [t, t+1]$ and the first derivatives $\nabla u, \nabla v, \nabla w, \nabla z$ exist and are H\"older continuous. Since the H\"older norms are uniformly bounded, the gradients are also uniformly bounded in $\lis$, we conclude that
\begin{align}
	\nut_{\wsin}, \nvt_{\wsin}, \nwt_{\wsin}, \nzt_{\wsin} \leq C,\label{l.4.1.1}
\end{align}
where $C$ is a constant independent of $t$.

\begin{lemma}[\cite{xbai}]\label{l.4.2}
	Suppose that $f$ is a uniformly continuous nonnegative function on $(1, \infty)$ such that 
	\begin{align*}
		\int_{1}^\infty f(t)\: \mathrm{d}t<\infty.
	\end{align*} 
	Then, $f(t)\to 0$ as $t\to\infty$.
\end{lemma}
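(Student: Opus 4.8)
The plan is to argue by contradiction, using uniform continuity to upgrade a single large value of $f$ into a whole interval on which $f$ stays bounded away from zero; accumulating such intervals then forces the integral to diverge, contradicting the hypothesis.

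First I would suppose the conclusion fails, so that $f(t)\not\to 0$ as $t\to\infty$. By the negation of convergence, there exist some $\varepsilon>0$ and a strictly increasing sequence $t_n\to\infty$ with $f(t_n)\geq\varepsilon$ for every $n$. Next I would invoke uniform continuity of $f$ on $(1,\infty)$: there is a $\delta>0$, independent of the base point, such that $|f(t)-f(s)|<\varepsilon/2$ whenever $|t-s|\leq\delta$. In particular, for each $n$ and each $t\in[t_n,t_n+\delta]$ we obtain $f(t)\geq f(t_n)-\varepsilon/2\geq\varepsilon/2$, and since $f\geq 0$ this yields the interval lower bound
\[
\int_{t_n}^{t_n+\delta} f(t)\,\mathrm{d}t \geq \frac{\varepsilon\delta}{2}
\]
for every $n$.

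The one step requiring a little care is making these contributions genuinely additive rather than overlapping. Because $t_n\to\infty$, I would inductively extract a subsequence $(t_{n_k})$ with $t_{n_{k+1}}>t_{n_k}+\delta$, so that the intervals $[t_{n_k},t_{n_k}+\delta]$ are pairwise disjoint and contained in $(1,\infty)$. Summing the lower bounds over these disjoint intervals then gives
\[
\int_{1}^{\infty} f(t)\,\mathrm{d}t \;\geq\; \sum_{k} \int_{t_{n_k}}^{t_{n_k}+\delta} f(t)\,\mathrm{d}t \;\geq\; \sum_{k} \frac{\varepsilon\delta}{2} \;=\; \infty,
\]
which contradicts the assumed finiteness of $\int_{1}^{\infty} f$. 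This contradiction establishes that $f(t)\to 0$ as $t\to\infty$. I expect the disjointness bookkeeping in the subsequence extraction to be the only mildly delicate point; the remainder is a direct application of the definitions of uniform continuity and of divergence of a series with equal positive terms. This lemma will later be applied to the dissipation terms produced by the Lyapunov functional to deduce pointwise decay.
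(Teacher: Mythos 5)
Your argument is correct and complete: the contradiction setup, the use of uniform continuity to produce the lower bound $f\geq\varepsilon/2$ on intervals of fixed length $\delta$, and the extraction of a disjoint subfamily of such intervals to force divergence of the integral is exactly the standard proof of this Barbalat-type lemma. The paper itself states the result with a citation to the reference and gives no proof, so there is nothing to compare against beyond noting that your argument coincides with the classical one used there.
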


First we start with the co-existent state of the species.

\subsection{Coexistence state of the species}
Here we assume that $a_2a_6<\min\Big\{1+a_1+a_2+a_1a_4+a_4a_6,$ $\left.\frac{1+a_1(a_3+a_4a_5)+a_4a_6+a_2a_5}{a_3}\right\}$, $a_3(1+a_2)+a_4a_5<1+a_4+a_2a_5$,  $a_5(1+a_1)+a_6<1+a_3(a_1+a_6)$ hold.
Let $(u, v, w, z)$ be the classical solution of \eqref{1.1} satisfying \eqref{1.2} and $(\ust, \vst, \wst, \zst)$ be the coexistence steady state of the system \eqref{1.1}.

\begin{lemma}\label{l.4.3}
	Suppose that the assumptions in Theorem 1.1 hold true and let $\Gamma_1=\frac{\mu_1a_1}{\mu_2a_3}$, $\Gamma_2=\frac{\mu_1a_2}{\mu_3a_5}$. If $\alpha+\beta<2\gamma, a_1a_4a_5>a_2a_3a_6$
	and
	\begin{gather*}
		\chi_1^2< \min\left\{\frac{d_1d_2\vst\Gamma_1}{\ust\nv_{\lis}^2\nw_{\lis}^2}, \frac{d_1d_3\wst\Gamma_2}{\ust\nv_{\lis}^2\nw_{\lis}^2}\right\},\\
		d_3\chi_2^2\vst\Gamma_1+d_2\xi^2\wst\Gamma_2<2d_2d_3d_4,\\
		\mu_2a_4\Gamma_1+\alpha<2\mu_2\Gamma_1+\mu_3a_6\Gamma_2,\\
		\mu_2a_4\Gamma_1+\beta<2\mu_3\Gamma_2+\mu_3a_6\Gamma_2,
	\end{gather*}
	then the following asymptotic behaviour holds
	\begin{align}
		\big\|\ut-\ust\big\|_{\lis}+\big\|\vt-\vst\big\|_{\lis}+\big\|\wt-\wst\big\|_{\lis}+\big\|\zt-\zst\big\|_{\lis}\to 0\label{l.4.3.1}
	\end{align}
	as $t\to\infty$.
\end{lemma}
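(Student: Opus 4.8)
The plan is to construct a Lyapunov functional of relative-entropy type and to show that its dissipation dominates the squared $\lts$-distances to the coexistence equilibrium. Concretely, I would set
\begin{align*}
E(t) = \int_\Omega\!\left(u - \ust - \ust\ln\tfrac{u}{\ust}\right) + \Gamma_1\!\int_\Omega\!\left(v - \vst - \vst\ln\tfrac{v}{\vst}\right) + \Gamma_2\!\int_\Omega\!\left(w - \wst - \wst\ln\tfrac{w}{\wst}\right) + \tfrac12\int_\Omega(z-\zst)^2,
\end{align*}
with the weights $\Gamma_1 = \frac{\mu_1 a_1}{\mu_2 a_3}$, $\Gamma_2 = \frac{\mu_1 a_2}{\mu_3 a_5}$ prescribed in the statement. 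Each entropy block is nonnegative and vanishes only at the corresponding equilibrium value, so $E\geq 0$, and $E$ is finite and differentiable along the trajectory thanks to positivity and the uniform bounds from Lemma \ref{l.2.6} and \eqref{l.4.1.1}.

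Next I would differentiate $E$ along \eqref{1.1}, integrating by parts and using the Neumann conditions. The pure-diffusion contributions produce the negative dissipation $-d_1\ust\int_\Omega\frac{\mgu^2}{u^2} - \Gamma_1 d_2\vst\int_\Omega\frac{\mgv^2}{v^2} - \Gamma_2 d_3\wst\int_\Omega\frac{\mgw^2}{w^2} - d_4\int_\Omega\mgz^2$; the taxis terms yield the cross gradient terms $\chi_1\ust\int_\Omega\frac{\nabla u\cdot(w\nabla v + v\nabla w)}{u}$, $\Gamma_1\chi_2\vst\int_\Omega\frac{\nabla v\cdot\nabla z}{v}$ and $-\Gamma_2\xi\wst\int_\Omega\frac{\nabla w\cdot\nabla z}{w}$; and the logistic terms, after substituting the equilibrium identities $1-\ust+a_1\vst+a_2\wst = 0$ and its analogues, give a quadratic form in $(u-\ust, v-\vst, w-\wst, z-\zst)$. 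The weights $\Gamma_1, \Gamma_2$ are chosen precisely so that the $u$--$v$ and $u$--$w$ cross terms of this quadratic form cancel, decoupling $u-\ust$ and leaving a reaction form whose negative definiteness is governed by conditions (iii)--(vi): condition (vi) ($a_1a_4a_5 > a_2a_3a_6$) fixes the sign of the residual $v$--$w$ coefficient $\Gamma_1\mu_2 a_4 - \Gamma_2\mu_3 a_6$, while (iii)--(v) supply the diagonal dominance making the $(v,w,z)$ block negative definite.

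The crux is absorbing the three taxis cross terms into the diffusion dissipation. I would estimate them by weighted Young/Cauchy--Schwarz inequalities after replacing $v, w$ by their $\lis$ bounds from Lemma \ref{l.2.6}: written in the variables $\frac{\nabla u}{u}, \frac{\nabla v}{v}, \frac{\nabla w}{w}$, the alarm-taxis term is controlled by splitting the $u$-dissipation against the $v$- and $w$-dissipations, which reproduces exactly the two bounds in condition (i); the two chemotaxis terms are then absorbed using the residual $v, w$-dissipation together with the $z$-dissipation, which is where condition (ii) ($d_3\chi_2^2\vst\Gamma_1 + d_2\xi^2\wst\Gamma_2 < 2d_2d_3d_4$) enters. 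I expect this balancing to be the main obstacle, since $\nabla v$ and $\nabla w$ each couple simultaneously to $\nabla u$ (alarm) and to $\nabla z$ (chemotaxis), so the bookkeeping of how much of each dissipation is spent on each cross term must be done carefully to land precisely on (i)--(ii). The outcome should be a differential inequality
\begin{align*}
\dt E(t) \leq -\delta\int_\Omega\!\left((u-\ust)^2 + (v-\vst)^2 + (w-\wst)^2 + (z-\zst)^2\right) =: -\delta\, F(t)
\end{align*}
for some $\delta>0$.

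Finally I would integrate this inequality: since $E\geq 0$, it follows that $\int_1^\infty F(t)\,\mathrm{d}t < \infty$. The uniform bounds \eqref{l.4.1.1} together with the H\"older/$\mathcal{C}^1$ control from Lemma \ref{l.4.1} make $F$ uniformly continuous on $(1,\infty)$, so Lemma \ref{l.4.2} yields $F(t)\to 0$, i.e. $\|\ut - \ust\|_{\lts} + \|\vt - \vst\|_{\lts} + \|\wt - \wst\|_{\lts} + \|\zt - \zst\|_{\lts}\to 0$. To upgrade to the claimed $\lis$ convergence \eqref{l.4.3.1}, I would interpolate by Gagliardo--Nirenberg, $\|f\|_{\lis}\leq C\|f\|_{\wsin}^{\theta}\|f\|_{\lts}^{1-\theta}$ for a suitable $\theta\in(0,1)$, applied to $f\in\{u-\ust, v-\vst, w-\wst, z-\zst\}$; since the $\lts$ norms vanish while the $\wsin$ norms stay bounded by \eqref{l.4.1.1}, the $\lis$ norms vanish as $t\to\infty$.
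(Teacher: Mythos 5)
Your proposal follows essentially the same route as the paper's proof: the identical weighted entropy functional $\mathcal{E}_1$ (with weights $1,\Gamma_1,\Gamma_2$ on the logarithmic blocks and $\tfrac12\int_\Omega(z-\zst)^2$), absorption of the taxis cross terms into the diffusion dissipation via Cauchy's inequality to produce conditions (i)--(ii), cancellation of the $u$--$v$ and $u$--$w$ reaction cross terms by the choice of $\Gamma_1,\Gamma_2$, the resulting inequality $\dt\mathcal{E}_1\leq-\sigma_1 f_1$, integration plus uniform continuity and Lemma \ref{l.4.2}, and finally the Gagliardo--Nirenberg interpolation against the $\wsin$ bounds to upgrade to $\lis$ convergence. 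The argument is correct and matches the paper step for step.
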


\begin{proof}
	To analyze the coexistence state of the species, we now introduce the following energy functional
	\begin{align*}
		\mathcal{E}_1(t):=&\ints\left(u-\ust-\ust \ln\frac{u}{\ust}\right)+\Gamma_1\ints\left(v-\vst-\vst \ln\frac{v}{\vst}\right)+\Gamma_2\ints\left(w-\wst-\wst \ln\frac{w}{\wst}\right)\\
		&+\frac{1}{2}\ints\left(z-\zst\right)^2,\\	=&\mathcal{I}_{11}(t)+\mathcal{I}_{12}(t)+\mathcal{I}_{13}(t)+\mathcal{I}_{14}(t), \quad t>0.
	\end{align*}
	To prove the nonnegativity of $\mathcal{E}_1$, let $\mathcal{H}(u)=u-\ust \ln u$, for $u>0$. Applying the Taylor's formula  to see that, for all $x\in \Omega$ and each $t > 0$, we can find $\tau = \tau(x, t) \in (0, 1)$ such that
	\begin{align*}
		\mathcal{H}(u)-\mathcal{H}(\ust)=&\:\mathcal{H'}(\ust)(u-\ust)+\frac{1}{2}\mathcal{H''}(\tau u+(1-\tau)\ust)(u-\ust)^2\\
		=&\:\frac{\ust}{2(\tau u+(1-\tau)\ust)^2}(u-\ust)^2\geq 0.
	\end{align*}
	Therefore, we get $\ints\left(u-\ust-\ust \ln\frac{u}{\ust}\right)=\ints \mathcal{H}(u)-\mathcal{H}(\ust)\geq 0$. A similar argument shows that  $\mathcal{E}_1(t)\geq 0$.
	Now, we compute
		\begin{align}       
			\dt\mathcal{I}_{11}(t)
			=&\ints\left(\frac{u-\ust}{u}\right)u_t \nonumber\\
			=&-d_1\ust\ints\left|\frac{\gu}{u}\right|^2+\chi_1\ust\ints\frac{w}{u}\gu\cdot\gv+\chi_1\ust\ints\frac{v}{u}\gu\cdot\gw \nonumber\\
			&+\mu_1\ints(u-\ust)(1-u+a_1 v+a_2 w)\nonumber \\
			\leq&-d_1\ust\ints\left|\frac{\gu}{u}\right|^2+\chi_1\ust\nw_{\lis}\ints\frac{\gu}{u}\cdot\gv+\chi_1\ust\nv_{\lis}\ints\frac{\gu}{u}\cdot\gw \nonumber\\
			&+\mu_1\ints(u-\ust)(1-u+a_1 v+a_2 w-(1-\ust+a_1 \vst+a_2 \wst))\nonumber \\
			\leq&-d_1\ust\ints\left|\frac{\gu}{u}\right|^2+\chi_1\ust\nw_{\lis}\ints\frac{\gu}{u}\cdot\gv+\chi_1\ust\nv_{\lis}\ints\frac{\gu}{u}\cdot\gw\nonumber\\
			&-\mu_1\ints(u-\ust)^2+\mu_1a_1\ints(u-\ust)(v-\vst)+\mu_1a_2\ints(u-\ust)(w-\wst).\label{l.4.3.2}        
		\end{align}
	Similarly, we get
	\begin{align}
			\dt\mathcal{I}_{12}(t)
			=&\Gamma_1\ints\left(\frac{v-\vst}{v}\right)v_t\nonumber \\
			=&-\Gamma_1d_2\vst\ints\left|\frac{{{\gv}}}{v}\right|^2+\Gamma_1\chi_2\vst\ints\frac{\gv}{v}\cdot\gz+\Gamma_1\mu_2\ints(v-\vst)(1-v-a_3 u+a_4 w) \nonumber\\
			{{\leq}}&-\Gamma_1d_2\vst\ints\left|\frac{\gv}{v}\right|^2+\Gamma_1\chi_2\vst\ints\frac{\gv}{v}\cdot\gz-\Gamma_1\mu_2\ints(v-\vst)^2\nonumber\\
			&-\Gamma_1\mu_2a_3\ints(v-\vst)(u-\ust)+\Gamma_1\mu_2a_4\ints(v-\vst)(w-\wst).\label{l.4.3.3}
	\end{align}
	An analogous calculation for $\mathcal{I}_{13}$ provides
	\begin{equation}
		\begin{split}
			\dt\mathcal{I}_{13}(t)
			=&\Gamma_2\ints\left(\frac{w-\wst}{v}\right)w_t \\
			=&-\Gamma_2d_3\wst\ints\left|\frac{\gw}{w}\right|^2-\Gamma_2\xi\wst\ints\frac{\gw}{w}\cdot\gz+\mu_3\ints(w-\wst)(1-w-a_5 u-a_6 v)  \\
			{{\leq}}&-{{\Gamma_2}}\wst\ints\left|\frac{\gw}{w}\right|^2-{{\Gamma_2}}\xi\wst\ints\frac{\gw}{w}\cdot\gz-{{\Gamma_2}}\mu_3\ints(w-\wst)^2 \\
			&-{{\Gamma_2}}\mu_3a_5\ints(w-\wst)(u-\ust) -{{\Gamma_2}}\mu_3a_6\ints(w-\wst)(v-\vst)\label{l.4.3.4}
		\end{split}
	\end{equation}
	and for $\mathcal{I}_{14}$
	\begin{equation} \begin{split}
			\dt\mathcal{I}_{14}(t)	=&\ints (z-\zst)z_t \\
			{\leq}&-d_4\ints\mgz^2+\alpha\ints (z-\zst)(v-\vst)+\beta\ints (z-\zst)(w-\wst)-\gamma\ints(z-\zst)^2.\label{l.4.3.5}
	\end{split} \end{equation}
	
	Adding \eqref{l.4.3.2}-\eqref{l.4.3.5}, we get
	\begin{align*}
		\dt\mathcal{E}_1(t)\leq&-d_1\ust\ints\left|\frac{\gu}{u}\right|^2+\chi_1\ust\nw_{\lis}\ints\frac{\gu}{u}\cdot\gv+\chi_1\ust\nv_{\lis}\ints\frac{\gu}{u}\cdot\gw\\
		&-\Gamma_1d_2\vst\ints\left|\frac{\gv}{v}\right|^2+\Gamma_1\chi_2\vst\ints\frac{\gv}{v}\cdot\gz-\Gamma_2d_3\wst\ints\left|\frac{\gw}{w}\right|^2-\Gamma_2\xi\wst\ints\frac{\gw}{w}\cdot\gz\\
		&-d_4\ints\mgz^2-\mu_1\ints(u-\ust)^2-\Gamma_1\mu_2\ints(v-\vst)^2-\Gamma_2\mu_3\ints(w-\wst)^2\\
		&-\gamma\ints(z-\zst)^2+\mu_1a_1\ints(u-\ust)(v-\vst)+\mu_1a_2\ints(u-\ust)(w-\wst)\\
		&-\Gamma_1\mu_2a_3\ints(v-\vst)(u-\ust)+\Gamma_1\mu_2a_4\ints(v-\vst)(w-\wst)\\
		&-\Gamma_2\mu_3a_5\ints(w-\wst)(u-\ust)-\Gamma_2\mu_3a_6\ints(w-\wst)(v-\vst)+\alpha\ints (z-\zst)(v-\vst)\\
		&+\beta\ints (z-\zst)(w-\wst).
	\end{align*}
	
	By simplifying, we obtain
	\begin{align} 
			\dt\mathcal{E}_1(t)\leq&-d_1\ust\ints\left|\frac{\gu}{u}\right|^2+\chi_1\ust\nw_{\lis}\ints\frac{\gu}{u}\cdot\gv+\chi_1\ust\nv_{\lis}\ints\frac{\gu}{u}\cdot\gw\nonumber \\
			&-\Gamma_1d_2\vst\ints\left|\frac{\gv}{v}\right|^2+\Gamma_1\chi_2\vst\ints\frac{\gv}{v}\cdot\gz-\Gamma_2d_3\wst\ints\left|\frac{\gw}{w}\right|^2-\Gamma_2\xi\wst\ints\frac{\gw}{w}\cdot\gz\nonumber \\
			&-d_4\ints\mgz^2-\mu_1\ints(u-\ust)^2-\Gamma_1\mu_2\ints(v-\vst)^2-\Gamma_2\mu_3\ints(w-\wst)^2\nonumber \\
			&-\gamma\ints(z-\zst)^2+\left(\Gamma_1\mu_2a_4-\Gamma_2\mu_3a_6\right)\ints(v-\vst)(w-\wst)+\alpha\ints (z-\zst)(v-\vst) \nonumber\\
			&+\beta\ints (z-\zst)(w-\wst).\label{l.4.3.6}
	\end{align}
	
	Applying Cauchy's inequality, we obtain the following estimates
	\begin{align*}
		\chi_1\ust\nw_{\lis}\ints\frac{\gu}{u}\cdot\gv\leq\:& \frac{d_1\ust}{2}\ints\left|\frac{\gu}{u}\right|^2+\frac{\chi_1^2\ust\nw_{\lis}^2}{2d_1}\ints\mgv^2,\\
		\chi_1\ust\nv_{\lis}\ints\frac{\gu}{u}\cdot\gw\leq\:& \frac{d_1\ust}{2}\ints\left|\frac{\gu}{u}\right|^2+\frac{\chi_1^2\ust\nv_{\lis}^2}{2d_1}\ints\mgw^2,\\
		\Gamma_1\chi_2\vst\ints\frac{\gv}{v}\cdot\gz\leq\:& \frac{\Gamma_1d_2\vst}{2}\ints\left|\frac{\gv}{v}\right|^2+\frac{\Gamma_1\chi_2^2\vst}{2d_2}\ints\mgz^2,\\
		\Gamma_2\xi\wst\ints\frac{\gw}{w}\cdot\gz\leq\:& \frac{\Gamma_2d_3\wst}{2}\ints\left|\frac{\gw}{w}\right|^2+\frac{\Gamma_2\xi^2\wst}{2d_3}\ints\mgz^2,\\
		\left(\Gamma_1\mu_2a_4-\Gamma_2\mu_3a_6\right)\ints(v-\vst)(w-\wst)\leq\:&\frac{\Gamma_1\mu_2a_4-\Gamma_2\mu_3a_6}{2}\ints(v-\vst)^2,\\
		&+\frac{\Gamma_1\mu_2a_4-\Gamma_2\mu_3a_6}{2}\ints(w-\wst)^2,\\
		\alpha\ints (z-\zst)(v-\vst)\leq\:& \frac{\alpha}{2}\ints (z-\zst)^2+\frac{\alpha}{2}\ints(v-\vst)^2,\\
		\beta\ints (z-\zst)(w-\wst)\leq\: &\frac{\beta}{2}\ints(z-\zst)^2+\frac{\beta}{2}\ints(w-\wst)^2.
	\end{align*}
	By substituting all the above estimates into \eqref{l.4.3.6}, we obtain
	\begin{align*}
		\dt\mathcal{E}_1(t)\leq&\frac{\chi_1^2\ust\nw_{\lis}^2}{2d_1}\ints\mgv^2
		+\frac{\chi_1^2\ust\nv_{\lis}^2}{2d_1}\ints\mgw^2-\frac{\Gamma_1d_2\vst}{2}\ints\left|\frac{\gv}{v}\right|^2\\
		&+\frac{\Gamma_1\chi_2^2\vst}{2d_2}\ints\mgz^2-\frac{\Gamma_2d_3\wst}{2}\ints\left|\frac{\gw}{w}\right|^2+\frac{\Gamma_2\xi^2\wst}{2d_3}\ints\mgz^2-d_4\ints\mgz^2\\
		&-\mu_1\ints(u-\ust)^2-\Gamma_1\mu_2\ints(v-\vst)^2-\Gamma_2\mu_3\ints(w-\wst)^2-\gamma\ints(z-\zst)^2\\
		&+\frac{\Gamma_1\mu_2a_4-\Gamma_2\mu_3a_6}{2}\ints(v-\vst)^2+\frac{\Gamma_1\mu_2a_4-\Gamma_2\mu_3a_6}{2}\ints(w-\wst)^2+\frac{\alpha}{2}\ints (z-\zst)^2\\
		&+\frac{\alpha}{2}\ints(v-\vst)^2+\frac{\beta}{2}\ints(z-\zst)^2+\frac{\beta}{2}\ints(w-\wst)^2.
	\end{align*}
	Combining the terms, one gets
	\begin{align*}
		\dt\mathcal{E}_1(t)\leq&-\left(\frac{\Gamma_1d_2\vst}{2\nv_{\lis}^2}-\frac{\chi_1^2\ust\nw_{\lis}^2}{2d_1}\right)\ints\mgv^2\\
		&-\left(\frac{\Gamma_2d_3\wst}{2\nw_{\lis}^2}
		-\frac{\chi_1^2\ust\nv_{\lis}^2}{2d_1}\right)\ints\mgw^2-\left(d_4-\frac{\Gamma_1\chi_2^2\vst}{2d_2}-\frac{\Gamma_2\xi^2\wst}{2d_3}\right)\ints\mgz^2\\
		&-\mu_1\ints(u-\ust)^2-\left(\Gamma_1\mu_2-\frac{\Gamma_1\mu_2a_4-\Gamma_2\mu_3a_6}{2}-\frac{\alpha}{2}\right)\ints(v-\vst)^2\\
		&-\left(\Gamma_2\mu_3-\frac{\Gamma_1\mu_2a_4-\Gamma_2\mu_3a_6}{2}-\frac{\beta}{2}\right)\ints(w-\wst)^2-\left(\gamma-\frac{\alpha}{2}-\frac{\beta}{2}\right)\ints(z-\zst)^2.
	\end{align*}
	Finally,
	\begin{align}
		\dt\mathcal{E}_1(t)\leq&-\sigma_1\mathcal{F}_1(t),\qquad \forall\, t>0, \label{l.4.3.7}
	\end{align}
	where $\sigma_1>0$ and
	\begin{align*}
		\mathcal{F}_1(t)=&\ints(u-\ust)^2+\ints(v-\vst)^2+\ints(w-\wst)^2+\ints(z-\zst)^2\\
		&+\ints|\gv|^2+\ints|\gw|^2+\ints|\gz|^2.
	\end{align*}
	Let
	\begin{align*}
		f_1(t)=\ints(\ut-\ust)^2+\ints(\vt-\vst)^2+\ints(\wt-\wst)^2+\ints(\zt-\zst)^2, \quad t>0.
	\end{align*}
	Now \eqref{l.4.3.7} takes the form
	\begin{align}
		\dt\mathcal{E}_1(t)\leq -\sigma_1\mathcal{F}_1(t)\leq -\sigma_1 f_1(t), \quad t>0.\label{l.4.3.8}
	\end{align}
	Upon integrating with respect to $t$, we obtain
	\begin{align*}
		\int_1^\infty f_1(t)\:\mbox{d}t \leq \frac{1}{\sigma_1}\Big(\mathcal{E}_1(1)-\mathcal{E}_1(t)\Big)<\infty.
	\end{align*}
	Since $f_1(t)$ is uniformly continuous in $(1, \infty)$, we use Lemma \eqref{l.4.2}, which gives
	\begin{align*}
		\ints(\ut-\ust)^2+\ints(\vt-\vst)^2+\ints(\wt-\wst)^2+\ints(\zt-\zst)^2\to 0
	\end{align*}
	as $t\to\infty$. Applying the Gagliardo-Nirenberg inequality, we have
	\begin{align}
		\Big\|\ut-\ust\Big\|_{\lis}\leq C_1\Big\|\ut-\ust\Big\|^{\frac{n}{n+2}}_{{ \mathcal{W}^{1,\infty}}(\Omega)}\:\:\Big\|\ut-\ust\Big\|^{\frac{2}{n+2}}_{\lts}, \quad t>0.\label{l.4.3.9}
	\end{align}
	From \eqref{l.4.1.1}, we can deduce that $\ut$ converges to $\ust$ in $\lis$ when $t\to\infty$. By using a similar argument, we can derive \eqref{l.4.3.1}.
\end{proof}

\subsection{Secondary predator only existence state}
Here we assume that 
\begin{gather*}
	a_2a_6\geq \min\left\{1+a_1+a_2+a_1a_4+a_4a_6, \frac{1+a_1(a_3+a_4a_5)+a_4a_6+a_2a_5}{a_3}\right\}, \\ a_3(1+a_2)+a_4a_5\geq 1+a_4+a_2a_5, \\  
	\text{and} \quad  a_5(1+a_1)+a_6\geq 1+a_3(a_1+a_6)    
\end{gather*}
hold.  Let $(u, v, w, z)$ be the classical solution of \eqref{1.1} satisfying \eqref{1.2} and $(1, 0, 0, 0)$ be the trivial steady state of the system \eqref{1.1}.
\begin{lemma}\label{l.4.4}
	Suppose that the assumptions in Theorem 1.1 hold true and let $\Gamma_1=\frac{\mu_1a_1}{\mu_2a_3}$, $\Gamma_2=\frac{\mu_1a_2}{\mu_3a_5}$. If $\alpha+\beta<2\gamma, a_1a_4a_5< a_2a_3a_6$
	and
	\begin{gather*}
		\chi_1^2< \min\left\{\frac{d_1d_2}{\nw_{\lis}^2}, \frac{d_1d_3}{\nv_{\lis}^2}\right\},\\
		d_3\chi_2^2\nv_{\lis}^2+d_2\xi^2\nw_{\lis}^2<2d_2d_3d_4,\\
		\mu_2+\mu_2a_4\nw_{\lis}+\frac{\alpha}{2}<\Gamma_1\mu_2<\mu_1a_1,\\
		\mu_3+\frac{\beta}{2}<\Gamma_2\mu_3<\mu_1a_2,
	\end{gather*}
	then the following asymptotic behaviour holds
	\begin{align}
		\Big\|\ut-1\Big\|_{\lis}+\Big\|\vt\Big\|_{\lis}+\Big\|\wt\Big\|_{\lis}+\Big\|\zt\Big\|_{\lis}\to 0 \label{l.4.4.1}
	\end{align}
	as $t\to\infty$.
\end{lemma}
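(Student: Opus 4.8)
The plan is to run the Lyapunov-functional scheme of Lemma~\ref{l.4.3}, but with a functional adapted to the fact that the target state $(1,0,0,0)$ has two vanishing components. For the surviving component I would keep the logarithmic relative entropy (with $u_e=1$), while for the extinct components $v,w$ I would use a \emph{combination} of a linear and a quadratic term, setting
\begin{align*}
	\mathcal{E}_2(t):=\ints\left(u-1-\ln u\right)+\Gamma_1\ints v+\Gamma_2\ints w+\frac12\ints v^2+\frac12\ints w^2+\frac12\ints z^2.
\end{align*}
Nonnegativity of the first integral follows exactly as in Lemma~\ref{l.4.3}, by Taylor expanding $\mathcal{H}(u)=u-\ln u$ about $u=1$ and using $\mathcal{H}''>0$; the remaining five integrals are manifestly nonnegative because $u,v,w\geq 0$.

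Next I would differentiate $\mathcal{E}_2$ along \eqref{1.1}. The entropy term yields $-d_1\ints\left|\frac{\gu}{u}\right|^2$, the alarm-taxis cross terms $\chi_1\nw_{\lis}\ints\frac{\gu}{u}\cdot\gv+\chi_1\nv_{\lis}\ints\frac{\gu}{u}\cdot\gw$, and, after writing $1-u+a_1v+a_2w=-(u-1)+a_1v+a_2w$, the kinetics $-\mu_1\ints(u-1)^2+\mu_1a_1\ints(u-1)v+\mu_1a_2\ints(u-1)w$. In the linear terms the diffusion and taxis parts integrate to zero by the Neumann condition, so only kinetics survive; the decisive point is the calibration $\Gamma_1\mu_2a_3=\mu_1a_1$ and $\Gamma_2\mu_3a_5=\mu_1a_2$, which makes $-\mu_1a_1\ints(u-1)v$ and $-\mu_1a_2\ints(u-1)w$ cancel the indefinite entropy couplings while leaving the favourable pieces $(\Gamma_1\mu_2-\mu_1a_1)\ints v$, $(\Gamma_2\mu_3-\mu_1a_2)\ints w$, the negative quadratics $-\Gamma_1\mu_2\ints v^2-\Gamma_2\mu_3\ints w^2$, and the cross term $(\Gamma_1\mu_2a_4-\Gamma_2\mu_3a_6)\ints vw$. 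The quadratic terms supply the missing dissipation $-d_2\ints\mgv^2-d_3\ints\mgw^2$, the chemotaxis cross terms with $\gz$, and, after discarding the nonpositive cubic contributions $-\mu_2\ints v^3,\ -\mu_2a_3\ints uv^2$ (and their $w$-analogues), a reaction bounded by $(\mu_2+\mu_2a_4\nw_{\lis})\ints v^2$ and $\mu_3\ints w^2$; finally $\tfrac12\ints z^2$ contributes $-d_4\ints\mgz^2+\alpha\ints zv+\beta\ints zw-\gamma\ints z^2$.

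Then I would absorb every indefinite cross term by Cauchy--Young. The two alarm-taxis terms are split against $\ints\left|\frac{\gu}{u}\right|^2$, producing $\frac{\chi_1^2\nw_{\lis}^2}{2d_1}\ints\mgv^2$ and $\frac{\chi_1^2\nv_{\lis}^2}{2d_1}\ints\mgw^2$; the chemotaxis terms, bounded via $v\leq\nv_{\lis}$, $w\leq\nw_{\lis}$ and split against $\ints\mgv^2,\ints\mgw^2$, produce $\frac{\chi_2^2\nv_{\lis}^2}{2d_2}\ints\mgz^2$ and $\frac{\xi^2\nw_{\lis}^2}{2d_3}\ints\mgz^2$; and $\alpha\ints zv,\ \beta\ints zw$ are split into $\ints z^2,\ints v^2,\ints w^2$. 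Collecting coefficients, the six hypotheses are precisely the six sign requirements: (i) forces the coefficients of $\ints\mgv^2$ and $\ints\mgw^2$ to be nonpositive, (ii) that of $\ints\mgz^2$, (iii) that of $\ints v^2$ (namely $-\Gamma_1\mu_2+\mu_2+\mu_2a_4\nw_{\lis}+\tfrac\alpha2<0$) while keeping the linear $\ints v$ coefficient negative, (iv) that of $\ints w^2$ and the linear $\ints w$ coefficient, (v) that of $\ints z^2$, and (vi) that of $\ints vw$. This delivers $\dt\mathcal{E}_2(t)\leq-\sigma_2 f_2(t)$ for some $\sigma_2>0$, with $f_2(t)=\ints(u-1)^2+\ints v^2+\ints w^2+\ints z^2$. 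Integrating over $(1,\infty)$ gives $\int_1^\infty f_2<\infty$; since Lemma~\ref{l.4.1} makes $f_2$ uniformly continuous, Lemma~\ref{l.4.2} yields $f_2(t)\to0$, i.e.\ $L^2$-convergence, which the Gagliardo--Nirenberg interpolation against the uniform bounds \eqref{l.4.1.1} (exactly as in \eqref{l.4.3.9}) upgrades to the $\lis$-convergence \eqref{l.4.4.1}.

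The main obstacle is the \emph{choice} of functional rather than any individual estimate. The coexistence entropy degenerates here: with $v_e=w_e=0$ a purely linear/logarithmic term for $v,w$ carries no gradient dissipation (its diffusion integrates to zero), so it cannot control the alarm-taxis terms $\chi_1\ints\frac{\gu}{u}\cdot(w\gv+v\gw)$; on the other hand a purely quadratic term loses the exact cancellation of the $(u-1)v,(u-1)w$ couplings and turns the logistic term into an uncontrollable positive multiple of $\ints v^2$. The linear-plus-quadratic combination resolves both difficulties simultaneously---the linear part provides the $\Gamma$-calibrated cancellation and first-order decay, the quadratic part provides the $\ints\mgv^2,\ints\mgw^2$ dissipation that absorbs the taxis---and it is exactly this split that makes hypotheses (i)--(vi) the sharp conditions.
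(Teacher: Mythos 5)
Your proposal is correct and follows essentially the same route as the paper: the identical Lyapunov functional $\ints(u-1-\ln u)+\Gamma_1\ints v+\tfrac12\ints v^2+\Gamma_2\ints w+\tfrac12\ints w^2+\tfrac12\ints z^2$, the same $\Gamma$-calibrated cancellation of the $\ints uv$ and $\ints uw$ couplings, the same Cauchy--Young absorption of the taxis cross terms, and the same integration/uniform-continuity/Gagliardo--Nirenberg conclusion. The only difference is presentational: the paper retains the nonpositive terms $-(\mu_1a_1-\Gamma_1\mu_2)\ints v$, $-(\mu_1a_2-\Gamma_2\mu_3)\ints w$ and $-(\Gamma_2\mu_3a_6-\Gamma_1\mu_2a_4)\ints vw$ explicitly in the final differential inequality rather than discarding them immediately, which it later exploits in the proof of Theorem~\ref{t.3} to get the exponential rate.
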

\begin{proof}
	We now introduce the following energy functional
	\begin{align*}
		\mathcal{E}_2(t):=&\ints\left(u-1- \ln u\right)+\Gamma_1\ints v+\frac{1}{2}\ints v^2+\Gamma_2\ints w+\frac{1}{2}\ints w^2+\frac{1}{2}\ints z^2,\\
		=&\mathcal{I}_{21}(t)+\mathcal{I}_{22}(t)+\mathcal{I}_{23}t)+\mathcal{I}_{24}(t)+\mathcal{I}_{25}(t)+\mathcal{I}_{26}(t), \quad t>0.
	\end{align*}
	The positivity of $u-1-\ln u$ for any $u>0$ implies that $\mathcal{E}_2(t)\geq 0$. We begin by computing the derivative of the first integral
	\begin{equation} \begin{split}
			\dt\mathcal{I}_{21}(t)
			=&\ints\left(\frac{u-1}{u}\right)u_t \\
			=&-d_1\ints\left|\frac{\gu}{u}\right|^2+\chi_1\ints\frac{w}{u}\gu\cdot\gv+\chi_1\ints\frac{v}{u}\gu\cdot\gw \\
			&+\mu_1\ints(u-1)(1-u+a_1 v+a_2 w) \\
			\leq&-d_1\ints\left|\frac{\gu}{u}\right|^2+\chi_1\nw_{\lis}\ints\frac{\gu}{u}\cdot\gv+\chi_1\nv_{\lis}\ints\frac{\gu}{u}\cdot\gw \\
			&-\mu_1\ints(u-1)^2 +\mu_1a_1\ints(u-1)v+\mu_1a_2\ints(u-1)w \\
			\leq&-d_1\ints\left|\frac{\gu}{u}\right|^2+\chi_1\nw_{\lis}\ints\frac{\gu}{u}\cdot\gv+\chi_1\nv_{\lis}\ints\frac{\gu}{u}\cdot\gw \\
			&-\mu_1\ints(u-1)^2 +\mu_1a_1\ints uv-\mu_1a_1\ints v+\mu_1a_2\ints uw-\mu_1a_2\ints w.\label{l.4.4.2}
	\end{split} \end{equation}
	Likewise for $\mathcal{I}_{22}(t)$
	\begin{equation}
		\begin{split}
			\dt\mathcal{I}_{22}(t)=&\Gamma_1\mu_2\ints v- \Gamma_1\mu_2\ints v^2-\Gamma_1\mu_2a_3\ints uv+\Gamma_1\mu_2a_4\ints vw \\
			=&\Gamma_1\mu_2\ints v- \Gamma_1\mu_2\ints v^2-\mu_1a_1\ints uv+\Gamma_1\mu_2a_4\ints vw,\label{l.4.4.3}
		\end{split}	
	\end{equation}
	and for $\mathcal{I}_{23}(t)$
	\begin{equation} \begin{split}
			\dt\mathcal{I}_{23}(t)
			=&d_2\ints v\lv-\chi_2\ints v\nabla\cdot(v\gz)+\mu_2\ints v^2(1-v-a_3u+a_4w) \\
			\leq&-d_2\ints \mgv^2+\chi_2\nv_{\lis}\ints \gv\cdot\gz+\mu_2\ints v^2-\mu_2\ints v^3-\mu_2a_3\ints v^2u \\
			&+\mu_2a_4\ints v^2w \\
			\leq&-d_2\ints \mgv^2+\chi_2\nv_{\lis}\ints \gv\cdot\gz+\mu_2\ints v^2+\mu_2a_4\nw_{\lis}\ints v^2.\label{l.4.4.4}
	\end{split} \end{equation}
	By a similar manner, we obtain
	\begin{equation} \begin{split}
			\dt\mathcal{I}_{24}(t)=&\Gamma_2 \mu_3\ints w-\Gamma_2 \mu_3\ints w^2-\Gamma_2 \mu_3a_5\ints uw-\Gamma_2 \mu_3a_6\ints vw \\
			=&\Gamma_2 \mu_3\ints w-\Gamma_2 \mu_3\ints w^2-\mu_1a_2\ints uw-\Gamma_2 \mu_3a_6\ints vw.\label{l.4.4.5}
	\end{split} \end{equation}
	Similarly for $\mathcal{I}_{25}(t)$
	\begin{equation} \begin{split}
			\dt\mathcal{I}_{25}(t)
			=&d_3\ints w\lw+\xi\ints w\nabla\cdot(w\gz)+\mu_3\ints w^2(1-w-a_5u-a_6v) \\
			\leq&-d_3\ints \mgw^2+\xi\nw_{\lis}\ints \gw\cdot\gz+\mu_3\ints w^2-\mu_3\ints w^3-\mu_3a_5\ints w^2u \\
			&-\mu_3a_6\ints w^2v \\
			\leq&-d_3\ints \mgw^2+\xi\nw_{\lis}\ints \gw\cdot\gz+\mu_3\ints w^2,\label{l.4.4.6}
	\end{split} \end{equation}
	and for $\mathcal{I}_{26}(t)$
	\begin{equation} \begin{split}
			\dt\mathcal{I}_{26}(t)
			=&d_4\ints z\lz+\alpha \ints vz+\beta\ints wz-\gamma\ints z^2 \\
			\leq&-d_4\ints \mgz^2+\frac{\alpha}{2} \ints v^2+\frac{\alpha}{2} \ints z^2+\frac{\beta}{2}\ints w^2+\frac{\beta}{2}\ints z^2-\gamma\ints z^2.\label{l.4.4.7}
	\end{split} \end{equation}
	Adding all the estimates \eqref{l.4.4.2}-\eqref{l.4.4.7}, we get
	\begin{equation} \begin{split}
			\dt\mathcal{E}_2(t)\leq&-d_1\ints\left|\frac{\gu}{u}\right|^2+\chi_1\nw_{\lis}\ints\frac{\gu}{u}\cdot\gv+\chi_1\nv_{\lis}\ints\frac{\gu}{u}\cdot\gw \\
			&-d_2\ints \mgv^2+\chi_2\nv_{\lis}\ints \gv\cdot\gz-d_3\ints \mgw^2+\xi\nw_{\lis}\ints \gw\cdot\gz \\
			&-d_4\ints \mgz^2-\mu_1\ints(u-1)^2-\mu_1a_1\ints v-\mu_1a_2\ints w
			+\Gamma_1\mu_2\ints v- \Gamma_1\mu_2\ints v^2 \\
			&+\Gamma_1\mu_2a_4\ints vw+\mu_2\ints v^2+\mu_2a_4\nw_{\lis}\ints v^2+\Gamma_2 \mu_3\ints w-\Gamma_2 \mu_3\ints w^2 \\
			&-\Gamma_2 \mu_3a_6\ints vw+\mu_3\ints w^2+\frac{\alpha}{2} \ints v^2+\frac{\alpha}{2} \ints z^2+\frac{\beta}{2}\ints w^2+\frac{\beta}{2}\ints z^2-\gamma\ints z^2.\label{l.4.4.8}
		\end{split}
	\end{equation}
	Using the  Cauchy inequality, we get the following estimates
	\begin{align*}
		\chi_1\nw_{\lis}\ints\frac{\gu}{u}\cdot\gv\leq\: &\frac{d_1}{2}\ints\left|\frac{\gu}{u}\right|^2+\frac{\chi_1^2\nw_{\lis}^2}{2d_1}\ints\mgv^2,\\
		\chi_1\nv_{\lis}\ints\frac{\gu}{u}\cdot\gw\leq\: &\frac{d_1}{2}\ints\left|\frac{\gu}{u}\right|^2+\frac{\chi_1^2\nv_{\lis}^2}{2d_1}\ints\mgw^2,\\
		\chi_2\nv_{\lis}\ints \gv\cdot\gz\leq\: & \frac{d_2}{2}\ints\mgv^2+\frac{\chi_2^2\nv_{\lis}^2}{2d_2}\ints\mgz^2,\\
		\xi\nw_{\lis}\ints \gw\cdot\gz\leq\: & \frac{d_3}{2}\ints\mgw^2+\frac{\xi^2\nw_{\lis}^2}{2d_3}\ints\mgz^2.
	\end{align*}
	Substituting all the above estimates in \eqref{l.4.4.8}, we get
	\begin{align*}
		\dt\mathcal{E}_2(t)\leq&-\left(\frac{d_2}{2}-\frac{\chi_1^2\nw_{\lis}^2}{2d_1}\right)\ints |\gv|^2-\left(\frac{d_3}{2}-\frac{\chi_1^2\nv_{\lis}^2}{2d_1}\right)\ints\mgw^2\\
		&-\left(d_4-\frac{\chi_2^2\nv_{\lis}^2}{2d_2}-\frac{\xi^2\nw_{\lis}^2}{2d_3}\right)\ints\mgz^2-\mu_1\ints(u-1)^2\\
		&-\left(\Gamma_1\mu_2-\mu_2-\mu_2a_4\nw_{\lis}-\frac{\alpha}{2}\right)\ints v^2-\left(\Gamma_2 \mu_3-\mu_3-\frac{\beta}{2}\right)\ints w^2\\
		&-\left(\gamma-\frac{\alpha}{2} -\frac{\beta}{2}\right)\ints z^2-\left(\mu_1a_1-\Gamma_1\mu_2\right)\ints v-\left(\mu_1a_2-\Gamma_2 \mu_3\right)\ints w\\
		&-\left(\Gamma_2 \mu_3a_6-\Gamma_1\mu_2a_4\right)\ints vw.
	\end{align*}
	Finally, we arrive at
	\begin{equation} \begin{split}
			\dt\mathcal{E}_2(t)\leq& -\sigma_2\mathcal{F}_2(t)-\left(\mu_1a_1-\Gamma_1\mu_2\right)\ints v-\left(\mu_1a_2-\Gamma_2 \mu_3\right)\ints w \\
			&-\left(\Gamma_2 \mu_3a_6-\Gamma_1\mu_2a_4\right)\ints vw, \label{l.4.4.9}
	\end{split} \end{equation}
	for every $t>0$, where $\sigma_2>0$ and
	\begin{align*}
		\mathcal{F}_2(t)=&\ints(u-1)^2+\ints v^2+\ints w^2+\ints z^2+\ints|\gv|^2+\ints|\gw|^2+\ints \mgz^2.
	\end{align*}
	Now, \eqref{l.4.4.9} can be written as
	\begin{align}
		\dt\mathcal{E}_2(t)
		\leq& -\sigma_2 f_2(t)-\left(\mu_1a_1-\Gamma_1\mu_2\right)\ints v-\left(\mu_1a_2-\Gamma_2 \mu_3\right)\ints w-\left(\Gamma_2 \mu_3a_6-\Gamma_1\mu_2a_4\right)\ints vw,\label{l.4.4.10}
	\end{align}
	for all $t>0$, where
	\begin{align*}
		f_2(t)=\ints(\ut-1)^2+\ints \vt^2+\ints \wt^2+\ints \zt^2.
	\end{align*}
	In view of Lemma \ref{l.4.3}, the proof is similar.
\end{proof}

\subsection{Semi-coexistence state of the species}

\subsubsection{Prey vanishing state}
Here we assume that \eqref{0.1} does not hold and $a_3<1$.
Let $(u, v, w, z)$ be the classical solution of \eqref{1.1} satisfying \eqref{1.2} and $(\ub, \vb, \wb, \zb)$ be the semi-coexistence steady state \\
$\left(\frac{1+a_1}{1+a_1a_3}, \frac{1-a_3}{1+a_1a_3}, 0, \frac{\alpha(1-a_3)}{\gamma(1+a_1a_3)}\right)$ of the system \eqref{1.1}.

\begin{lemma}\label{l.4.5}
	Let $\Gamma_1=\frac{\mu_1a_1}{\mu_2a_3}$, $\Gamma_2=\frac{\mu_1a_2}{\mu_3a_5}$. If $\alpha+\beta<2\gamma, a_1a_4a_5<a_2a_3a_6$, $\alpha<2\Gamma_1\mu_2$
	and
	\begin{gather*}
		\chi_1^2< \min\left\{\frac{d_1d_2\vb\Gamma_1}{\ub\nv_{\lis}^2\nw_{\lis}^2}, \frac{d_1d_3}{\ub\nv_{\lis}^2}\right\},\\
		d_3\chi_2^2\vb\Gamma_1+d_2\xi^2\nw_{\lis}^2<2d_2d_3d_4,\\
		\mu_3+\frac{\beta}{2}<\Gamma_2\mu_3<\mu_1a_2\ub+\Gamma_1\mu_2a_4\vb,
	\end{gather*}
	then the following asymptotic behaviour holds
	\begin{align}
		\big\|\ut-\ub\big\|_{\lis}+\big\|\vt-\vb\big\|_{\lis}+\big\|\wt\big\|_{\lis}+\big\|\zt-\zb\big\|_{\lis}\to 0,\label{l.4.5.1}
	\end{align}
	as $t\to\infty$.
\end{lemma}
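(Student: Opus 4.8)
The plan is to mirror the Lyapunov-functional argument of Lemma \ref{l.4.3}, but adjusted to the fact that at the steady state $(\ub,\vb,\wb,\zb)$ the prey density vanishes ($\wb=0$) while $u,v,z$ tend to strictly positive values. Accordingly I would introduce
\begin{align*}
	\mathcal{E}_3(t):=&\ints\left(u-\ub-\ub\ln\frac{u}{\ub}\right)+\Gamma_1\ints\left(v-\vb-\vb\ln\frac{v}{\vb}\right)\\
	&+\Gamma_2\ints w+\frac{1}{2}\ints w^2+\frac{1}{2}\ints(z-\zb)^2,
\end{align*}
using the entropy densities $\phi-\phi^*-\phi^*\ln(\phi/\phi^*)$ for the non-vanishing components $u$ and $v$ (their nonnegativity follows from a second-order Taylor expansion of $\mathcal{H}(s)=s-\phi^*\ln s$, exactly as in Lemma \ref{l.4.3}), and the mixed linear--quadratic density $\Gamma_2 w+\frac{1}{2}w^2$ for the vanishing prey, precisely as in the functional $\mathcal{E}_2$ of Lemma \ref{l.4.4}. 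Since $w\geq 0$, this last piece is nonnegative and vanishes only at $w=0$, so $\mathcal{E}_3\geq 0$ with equality at the steady state.

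Next I would differentiate along \eqref{1.1}, integrate by parts, discard the nonpositive cubic contributions $-\mu_3\ints w^3$, $-\mu_3a_5\ints uw^2$, $-\mu_3a_6\ints vw^2$ produced by the quadratic prey density, and rewrite every reaction term around the steady state via the identities $1-\ub+a_1\vb=0$, $1-\vb-a_3\ub=0$ and $\alpha\vb-\gamma\zb=0$. Two cancellations are structural and fix the choices $\Gamma_1=\frac{\mu_1a_1}{\mu_2a_3}$, $\Gamma_2=\frac{\mu_1a_2}{\mu_3a_5}$: the term $\mu_1a_1\ints(u-\ub)(v-\vb)$ from the $u$-equation is annihilated by $-\Gamma_1\mu_2a_3\ints(v-\vb)(u-\ub)$ from the $v$-equation, and the mixed term $\mu_1a_2\ints(u-\ub)w$ is annihilated by the $(u-\ub)w$ part of $-\Gamma_2\mu_3a_5\ints uw$.

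The estimation then splits into three groups. First, the gradient cross terms $\chi_1\ub\nw_{\lis}\ints\frac{\gu}{u}\cdot\gv$, $\chi_1\ub\nv_{\lis}\ints\frac{\gu}{u}\cdot\gw$, $\Gamma_1\chi_2\vb\ints\frac{\gv}{v}\cdot\gz$ and $\xi\nw_{\lis}\ints\gw\cdot\gz$ are absorbed by Cauchy's inequality into the dissipation $-d_1\ub\ints|\frac{\gu}{u}|^2$, $-\Gamma_1 d_2\vb\ints|\frac{\gv}{v}|^2$, $-d_3\ints\mgw^2$, $-d_4\ints\mgz^2$; after bounding $\ints|\frac{\gv}{v}|^2\geq\nv_{\lis}^{-2}\ints\mgv^2$, the residual coefficients of $\ints\mgv^2$, $\ints\mgw^2$, $\ints\mgz^2$ become negative exactly under (i) and (ii) (note that the $w$-dissipation here is $\ints\mgw^2$, not $\ints|\frac{\gw}{w}|^2$, which is why (i) carries no $\wb$ factor). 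Second, the squared reaction terms leave the coefficients $-\mu_1$ on $(u-\ub)^2$, $-(\Gamma_1\mu_2-\frac{\alpha}{2})$ on $(v-\vb)^2$ (negative by (vi)), $-(\Gamma_2\mu_3-\mu_3-\frac{\beta}{2})$ on $w^2$ (negative by the lower bound in (iii)), and $-(\gamma-\frac{\alpha}{2}-\frac{\beta}{2})$ on $(z-\zb)^2$ (negative by (iv)), once the $z$-couplings are split by Young's inequality. The \emph{delicate part, and the main obstacle}, is the group of terms that are only linear in the vanishing prey: collecting all contributions one is left with $(\Gamma_1\mu_2a_4-\Gamma_2\mu_3a_6)\ints vw-(\mu_1a_2\ub+\Gamma_1\mu_2a_4\vb-\Gamma_2\mu_3)\ints w$. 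Since $v,w\geq 0$, the first term is nonpositive exactly when $\Gamma_1\mu_2a_4\leq\Gamma_2\mu_3a_6$, i.e. $a_1a_4a_5\leq a_2a_3a_6$ (hypothesis (v)), and the coefficient of the nonnegative quantity $\ints w$ is nonpositive exactly when $\Gamma_2\mu_3\leq\mu_1a_2\ub+\Gamma_1\mu_2a_4\vb$ (the upper bound in (iii)); here the non-sign-definiteness of $\ints(v-\vb)w$ must be circumvented by re-expressing it through the nonnegative quantities $\ints vw$ and $\ints w$, a feature absent from the coexistence analysis.

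Assembling these estimates yields $\dt\mathcal{E}_3(t)\leq-\sigma_3\mathcal{F}_3(t)$ for some $\sigma_3>0$, with $\mathcal{F}_3(t)=\ints(u-\ub)^2+\ints(v-\vb)^2+\ints w^2+\ints(z-\zb)^2+\ints\mgv^2+\ints\mgw^2+\ints\mgz^2$. Setting $f_3(t)=\ints(\ut-\ub)^2+\ints(\vt-\vb)^2+\ints\wt^2+\ints(\zt-\zb)^2\leq\mathcal{F}_3(t)$ and integrating gives $\int_1^\infty f_3\leq\sigma_3^{-1}\mathcal{E}_3(1)<\infty$; since $f_3$ is uniformly continuous by the uniform bounds \eqref{l.4.1.1}, Lemma \ref{l.4.2} forces $f_3(t)\to 0$, i.e. $\mathcal{L}^2$-convergence to $(\ub,\vb,0,\zb)$. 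Finally, interpolating through the Gagliardo--Nirenberg inequality as in \eqref{l.4.3.9} and invoking the uniform $\wsin$-bounds \eqref{l.4.1.1} upgrades this to the $\lis$-convergence \eqref{l.4.5.1}.
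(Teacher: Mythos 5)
Your proposal is correct and follows essentially the same route as the paper: the identical mixed Lyapunov functional (logarithmic entropies for $u,v$, the linear-plus-quadratic density $\Gamma_2 w+\tfrac12 w^2$ for the vanishing prey, and the quadratic term for $z$), the same cancellations forced by $\Gamma_1\mu_2a_3=\mu_1a_1$ and $\Gamma_2\mu_3a_5=\mu_1a_2$, the same Cauchy/Young absorption of the taxis cross terms under (i)--(ii), the same sign bookkeeping for $\ints vw$ and $\ints w$ via (v) and the bounds in (iii), and the same integration-plus-uniform-continuity and Gagliardo--Nirenberg upgrade to $\lis$. The only cosmetic difference is that you drop the nonpositive terms $-(\mu_1a_2\ub+\Gamma_1\mu_2a_4\vb-\Gamma_2\mu_3)\ints w$ and $-(\Gamma_2\mu_3a_6-\Gamma_1\mu_2a_4)\ints vw$ immediately, whereas the paper carries them along for later use in the exponential-rate argument of Theorem \ref{t.4}; for the lemma's conclusion this is immaterial.
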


\begin{proof}
	To analyse the coexistence state of the species, we now introduce the following energy functional
	\begin{align*}
		\mathcal{E}_3(t):=&\ints\left(u-\ub-\ub \ln\frac{u}{\ub}\right)+\Gamma_1\ints\left(v-\vb-\vb \ln\frac{v}{\vb}\right)+\Gamma_2\ints w+\frac{1}{2}\ints w^2+\frac{1}{2}\ints\left(z-\zb\right)^2,\\	=&\mathcal{I}_{31}(t)+\mathcal{I}_{32}(t)+\mathcal{I}_{33}(t)+\mathcal{I}_{34}(t)+\mathcal{I}_{35}(t), \quad t>0.
	\end{align*}
	By a similar argument as in the previous Lemma \ref{l.4.3}, we find that $\mathcal{E}_3(t)\geq 0$.\newpage
	Now, we compute 
	\begin{align}
			\dt\mathcal{I}_{31}(t)
			\leq&-d_1\ub\ints\left|\frac{\gu}{u}\right|^2+\chi_1\ub\nw_{\lis}\ints\frac{\gu}{u}\cdot\gv+\chi_1\ub\nv_{\lis}\ints\frac{\gu}{u}\cdot\gw\nonumber\\
			&-\mu_1\ints(u-\ub)^2+\mu_1a_1\ints(u-\ub)(v-\vb)+\mu_1a_2\ints(u-\ub)(w-\wb).\label{l.4.5.2}
	\end{align}
	Similarly, we get
	\begin{align}
			\dt\mathcal{I}_{32}(t)
			=&-\Gamma_1d_2\vb\ints\left|\frac{\gv}{v}\right|^2+\Gamma_1\chi_2\vb\ints\frac{\gv}{v}\cdot\gz-\Gamma_1\mu_2\ints(v-\vb)^2\nonumber\\
			&-\Gamma_1\mu_2a_3\ints(v-\vb)(u-\ub)+\Gamma_1\mu_2a_4\ints(v-\vb)(w-\wb).\label{l.4.5.3}
	 \end{align}
	Analogously, for $\mathcal{I}_{33}$,
	\begin{align}
		\dt\mathcal{I}_{33}(t)
		=&\Gamma_2 \mu_3\ints w-\Gamma_2 \mu_3\ints w^2-\mu_1a_2\ints uw-\Gamma_2 \mu_3a_6\ints vw.\label{l.4.5.4}
	\end{align}
	Following the same procedure for $\mathcal{I}_{34}(t)$
	\begin{align}
		\dt\mathcal{I}_{34}(t)
		\leq&-d_3\ints \mgw^2+\xi\nw_{\lis}\ints \gw\cdot\gz+\mu_3\ints w^2\label{l.4.5.5}
	\end{align}
	and for $\mathcal{I}_{35}$,
	\begin{align}
		\dt\mathcal{I}_{35}(t)
		=&-d_4\ints\mgz^2+\alpha\ints (z-\zb)(v-\vb)+\beta\ints (z-\zb)(w-\wb)-\gamma\ints(z-\zb)^2.\label{l.4.5.6}
	\end{align}
	Adding \eqref{l.4.5.2}-\eqref{l.4.5.6}, we get
	\begin{align*}
		\dt\mathcal{E}_3(t)\leq&-d_1\ub\ints\left|\frac{\gu}{u}\right|^2+\chi_1\ub\nw_{\lis}\ints\frac{\gu}{u}\cdot\gv+\chi_1\ub\nv_{\lis}\ints\frac{\gu}{u}\cdot\gw \\
		&-\Gamma_1d_2\vb\ints\left|\frac{\gv}{v}\right|^2+\Gamma_1\chi_2\vb\ints\frac{\gv}{v}\cdot\gz-d_3\ints \mgw^2+\xi\nw_{\lis}\ints \gw\cdot\gz\\
		&-d_4\ints\mgz^2-\mu_1\ints(u-\ub)^2-\Gamma_1\mu_2\ints(v-\vb)^2-\Gamma_2 \mu_3\ints w^2+\mu_3\ints w^2\\
		&-\gamma\ints(z-\zb)^2-\Gamma_2 \mu_3a_6\ints vw+\Gamma_1\mu_2a_4\ints v w-\Gamma_1\mu_2a_4\vb \ints w+\Gamma_2 \mu_3\ints w\\
		&-\mu_1a_2\ub\ints w+\alpha\ints (z-\zb)(v-\vb)+\beta\ints (z-\zb)w.
	\end{align*}
	By simplifying, we obtain
	\begin{equation} \begin{split} 
			\dt\mathcal{E}_3(t)\leq&-d_1\ub\ints\left|\frac{\gu}{u}\right|^2+\chi_1\ub\nw_{\lis}\ints\frac{\gu}{u}\cdot\gv+\chi_1\ub\nv_{\lis}\ints\frac{\gu}{u}\cdot\gw \\
			&-\Gamma_1d_2\vb\ints\left|\frac{\gv}{v}\right|^2+\Gamma_1\chi_2\vb\ints\frac{\gv}{v}\cdot\gz-d_3\ints \mgw^2+\xi\nw_{\lis}\ints \gw\cdot\gz \\
			&-d_4\ints\mgz^2-\mu_1\ints(u-\ub)^2-\Gamma_1\mu_2\ints(v-\vb)^2-\left(\Gamma_2 \mu_3-\mu_3\right)\ints w^2 \\
			&-\gamma\ints(z-\zb)^2-\left(\Gamma_2 \mu_3a_6-\Gamma_1\mu_2a_4\right)\ints v w-\left(\mu_1a_2\ub+\Gamma_1\mu_2a_4\vb -\Gamma_2 \mu_3\right)\ints w \\
			&+\alpha\ints (z-\zb)(v-\vb)+\beta\ints (z-\zb)w.\label{l.4.5.7}
	\end{split} \end{equation}
	Applying Cauchy's inequality, we obtain the following estimates
	\begin{align*}
		\chi_1\ub\nw_{\lis}\ints\frac{\gu}{u}\cdot\gv\leq\:& \frac{d_1\ub}{2}\ints\left|\frac{\gu}{u}\right|^2+\frac{\chi_1^2\ub\nw_{\lis}^2}{2d_1}\ints\mgv^2,\\
		\chi_1\ub\nv_{\lis}\ints\frac{\gu}{u}\cdot\gw\leq\:& \frac{d_1\ub}{2}\ints\left|\frac{\gu}{u}\right|^2+\frac{\chi_1^2\ub\nv_{\lis}^2}{2d_1}\ints\mgw^2,\\
		\Gamma_1\chi_2\vb\ints\frac{\gv}{v}\cdot\gz\leq\:& \frac{\Gamma_1d_2\vb}{2}\ints\left|\frac{\gv}{v}\right|^2+\frac{\Gamma_1\chi_2^2\vb}{2d_2}\ints\mgz^2,\\
		\xi\nw_{\lis}\ints \gw\cdot\gz\leq\: & \frac{d_3}{2}\ints\mgw^2+\frac{\xi^2\nw_{\lis}^2}{2d_3}\ints\mgz^2,\\
		\alpha\ints (z-\zb)(v-\vb)\leq\:& \frac{\alpha}{2}\ints (z-\zb)^2+\frac{\alpha}{2}\ints(v-\vb)^2,\\
		\beta\ints (z-\zb)w\leq\: &\frac{\beta}{2}\ints(z-\zb)^2+\frac{\beta}{2}\ints w^2.
	\end{align*}
	By substituting all the above estimates into \eqref{l.4.5.7}, we obtain
	\begin{align*}
		\dt\mathcal{E}_3(t)\leq&-d_1\ub\ints\left|\frac{\gu}{u}\right|^2{+}\frac{d_1\ub}{2}\ints\left|\frac{\gu}{u}\right|^2+\frac{\chi_1^2\ub\nw_{\lis}^2}{2d_1}\ints\mgv^2+\frac{d_1\ub}{2}\ints\left|\frac{\gu}{u}\right|^2\\
		&+\frac{\chi_1^2\ub\nv_{\lis}^2}{2d_1}\ints\mgw^2-\Gamma_1d_2\vb\ints\left|\frac{\gv}{v}\right|^2+\frac{\Gamma_1d_2\vb}{2}\ints\left|\frac{\gv}{v}\right|^2+\frac{\Gamma_1\chi_2^2\vb}{2d_2}\ints\mgz^2\\
		&-d_3\ints \mgw^2+\frac{d_3}{2}\ints\mgw^2+\frac{\xi^2\nw_{\lis}^2}{2d_3}\ints\mgz^2-d_4\ints\mgz^2-\mu_1\ints(u-\ub)^2\\
		&-\Gamma_1\mu_2\ints(v-\vb)^2-\left(\Gamma_2 \mu_3-\mu_3\right)\ints w^2-\gamma\ints(z-\zb)^2-\left(\Gamma_2 \mu_3a_6-\Gamma_1\mu_2a_4\right)\ints v w\\
		&-\left(\mu_1a_2\ub+\Gamma_1\mu_2a_4\vb -\Gamma_2 \mu_3\right)\ints w+\frac{\alpha}{2}\ints (z-\zb)^2+\frac{\alpha}{2}\ints(v-\vb)^2+\frac{\beta}{2}\ints(z-\zb)^2\\
		&+\frac{\beta}{2}\ints w^2.
	\end{align*}
	Combining the terms, one gets
	\begin{align*}
		\dt\mathcal{E}_3(t)\leq&-\left(\frac{\Gamma_1d_2\vb}{2\nv_{\lis}^2}-\frac{\chi_1^2\ub\nw_{\lis}^2}{2d_1}\right)\ints\mgv^2-\left(\frac{d_3}{2}
		-\frac{\chi_1^2\ub\nv_{\lis}^2}{2d_1}\right)\ints\mgw^2\\
		&-\left(d_4-\frac{\Gamma_1\chi_2^2\vb}{2d_2}-\frac{\xi^2\nw_{\lis}^2}{2d_3}\right)\ints\mgz^2-\mu_1\ints(u-\ub)^2\\
		&-\left(\Gamma_1\mu_2-\frac{\alpha}{2}\right)\ints(v-\vb)^2-\left(\Gamma_2\mu_3-\mu_3-\frac{\beta}{2}\right)\ints w^2-\left(\gamma-\frac{\alpha}{2}-\frac{\beta}{2}\right)\ints(z-\zb)^2\\
		&-(\mu_1a_2\ub+\Gamma_1\mu_2a_4\vb-\Gamma_2\mu_3)\ints w-(\Gamma_2\mu_3a_6-\Gamma_1\mu_2a_4)\ints vw.
	\end{align*}
	Finally, we arrive at
	\begin{equation} \begin{split}
			\dt\mathcal{E}_3(t)\leq&-\sigma_1\mathcal{F}_3(t)-(\mu_1a_2\ub+\Gamma_1\mu_2a_4\vb-\Gamma_2\mu_3)\ints w \\
			&-(\Gamma_2\mu_3a_6-\Gamma_1\mu_2a_4)\ints vw,\qquad \forall\, t>0,\label{l.4.5.8}
	\end{split} \end{equation}
	where $\sigma_3>0$ and
	\begin{align*}
		\mathcal{F}_3(t)=&\ints(u-\ub)^2+\ints(v-\vb)^2+\ints w^2+\ints(z-\zb)^2 +\ints|\gv|^2+\ints|\gw|^2+\ints|\gz|^2,
	\end{align*}
	Let
	\begin{align*}
		f_3(t)=\ints(\ut-\ub)^2+\ints(\vt-\vb)^2+\ints \wt^2+\ints(\zt-\zb)^2, \quad t>0.
	\end{align*}
	Now \eqref{l.4.5.8} takes the form
	\begin{equation} \begin{split}
			\dt\mathcal{E}_3(t)\leq&  -\sigma_1 f_3(t)-(\mu_1a_2\ub+\Gamma_1\mu_2a_4\vb-\Gamma_2\mu_3)\ints w \\
			&-(\Gamma_2\mu_3a_6-\Gamma_1\mu_2a_4)\ints vw, \quad t>0.\label{l.4.5.9}
	\end{split} \end{equation}
	Upon integrating with respect to $t$, we obtain
	\begin{align*}
		\int_1^\infty f_3(t)\:\mbox{d}t \leq \frac{1}{\sigma_1}\Big(\mathcal{E}_3(1)-\mathcal{E}_3(t)\Big)<\infty.
	\end{align*}
	Since $f_3(t)$ is uniformly continuous in $(1, \infty)$, which gives
	\begin{align*}
		\ints(\ut-\ub)^2+\ints(\vt-\vb)^2+\ints \wt^2+\ints(\zt-\zb)^2\to 0,
	\end{align*}
	as $t\to\infty$. 
	By using a similar argument as in the proof of Lemma \ref{l.4.3}, we can derive \eqref{l.4.5.1}.
\end{proof}

\subsubsection{Primary predator vanishing state}
Here we assume that \eqref{0.1} does not hold and $a_5<1$.  Let $(u, v, w, z)$ be the classical solution of \eqref{1.1} satisfying \eqref{1.2} and $(\uh, \vh, \wh, \zh)$ be the semi-coexistence steady state \\ $\left(\frac{1+a_2}{1+a_2a_5}, 0, \frac{1-a_5}{1+a_2a_5}, \frac{\beta(1-a_5)}{\gamma(1+a_2a_5)}\right)$ of the system \eqref{1.1}. 

\begin{lemma}\label{l.4.6}
	Let $\Gamma_1=\frac{\mu_1a_1}{\mu_2a_3}$, $\Gamma_2=\frac{\mu_1a_2}{\mu_3a_5}$. If $\alpha+\beta<2\gamma, a_1a_4a_5< a_2a_3a_6, \beta<\Gamma_2\mu_3$
	and
	\begin{gather*}
		\chi_1^2< \min\left\{\frac{d_1d_2}{\uh\nw_{\lis}^2}, \frac{d_1d_3\wh\Gamma_2}{\uh\nv_{\lis}^2\nw_{\lis}^2}\right\},\\
		d_3\chi_2^2\nv_{\lis}^2+d_2\xi^2\wh\Gamma_2<2d_2d_3d_4,\\
		\mu_2+\mu_2a_4\nw_{\lis}+\frac{\alpha}{2}<\Gamma_1\mu_2,\\
		\Gamma_1\mu_2+\Gamma_2\mu_3a_6\wh<\mu_1a_1\uh,
	\end{gather*}
	then the following asymptotic behaviour holds
	\begin{align}
		\Big\|\ut-\uh\Big\|_{\lis}+\Big\|\vt\Big\|_{\lis}+\Big\|\wt-\wh\Big\|_{\lis}+\Big\|\zt-\zh\Big\|_{\lis}\to 0,\label{l.4.6.1}
	\end{align}
	as $t\to\infty$.
\end{lemma}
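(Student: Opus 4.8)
The plan is to reproduce the Lyapunov-functional scheme of Lemma \ref{l.4.3}, but adapted to the regime in which the primary predator $v$ goes extinct while $u$ and $w$ persist at the positive values $\uh,\wh$. Each \emph{persisting} component therefore gets a logarithmic entropy and the \emph{vanishing} component a linear-plus-quadratic penalty, so I would set
\begin{align*}
	\mathcal{E}_4(t):=&\ints\left(u-\uh-\uh\ln\frac{u}{\uh}\right)+\Gamma_1\ints v+\frac{1}{2}\ints v^2 \\
	&+\Gamma_2\ints\left(w-\wh-\wh\ln\frac{w}{\wh}\right)+\frac{1}{2}\ints(z-\zh)^2.
\end{align*}
Nonnegativity follows exactly as in Lemma \ref{l.4.3}: the Taylor expansion of $\mathcal{H}(s)=s-\hat s\ln s$ makes each logarithmic integrand nonnegative, and $\Gamma_1 v+\tfrac12 v^2\ge0$ for $v\ge0$, so $\mathcal{E}_4\ge0$. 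I would record the equilibrium identities obtained by putting $\hat v=0$ in the steady-state system, namely $\uh=1+a_2\wh$, $\wh=1-a_5\uh$ and $\gamma\zh=\beta\wh$, which will be used to rewrite every reaction term in centered form.

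Next I would differentiate $\mathcal{E}_4$ term by term along \eqref{1.1}, integrating by parts as in \eqref{l.4.3.2}--\eqref{l.4.3.5} and \eqref{l.4.4.2}--\eqref{l.4.4.7}. The $u$-entropy yields the sink $-d_1\uh\ints\left|\frac{\gu}{u}\right|^2$, the two alarm-taxis couplings $\chi_1\uh\nw_{\lis}\ints\frac{\gu}{u}\cdot\gv$ and $\chi_1\uh\nv_{\lis}\ints\frac{\gu}{u}\cdot\gw$, and (after subtracting $1-\uh+a_2\wh=0$) the reaction part $-\mu_1\ints(u-\uh)^2+\mu_1 a_1\ints(u-\uh)v+\mu_1 a_2\ints(u-\uh)(w-\wh)$. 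The pivotal algebraic facts are $\Gamma_1\mu_2 a_3=\mu_1 a_1$ and $\Gamma_2\mu_3 a_5=\mu_1 a_2$: the first makes the $\ints uv$ contributions from the $u$- and $v$-equations cancel, and the second makes the $\ints(u-\uh)(w-\wh)$ contributions from the $u$- and $w$-equations cancel. What remains from all these cross-terms is exactly one linear remainder $\left(\Gamma_1\mu_2+\Gamma_2\mu_3 a_6\wh-\mu_1 a_1\uh\right)\ints v$ together with one product remainder $\left(\Gamma_1\mu_2 a_4-\Gamma_2\mu_3 a_6\right)\ints vw$.

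I would then dispose of the gradient and product couplings by Cauchy's inequality as in \eqref{l.4.3.6} and the display following it. The two $\frac{\gu}{u}$ couplings are absorbed into $-d_1\uh\ints\left|\frac{\gu}{u}\right|^2$ at the cost of $\frac{\chi_1^2\uh\nw_{\lis}^2}{2d_1}\ints\mgv^2$ and $\frac{\chi_1^2\uh\nv_{\lis}^2}{2d_1}\ints\mgw^2$; the $\chi_2$ coupling (controlled by the $v$-quadratic diffusion $-d_2\ints\mgv^2$) and the $\xi$ coupling (controlled by the $w$-entropy diffusion $-\Gamma_2 d_3\wh\ints\left|\frac{\gw}{w}\right|^2$) both feed $\ints\mgz^2$; and the $\alpha,\beta$ products from the $z$-equation, after $\gamma\zh=\beta\wh$ kills the residual linear-in-$(z-\zh)$ piece, split into $\tfrac{\alpha}{2}\ints v^2$, $\tfrac{\beta}{2}\ints(w-\wh)^2$ and $\tfrac{\alpha+\beta}{2}\ints(z-\zh)^2$. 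Collecting coefficients, hypothesis (i) makes the $\ints\mgv^2$ and $\ints\mgw^2$ coefficients negative, (ii) makes the $\ints\mgz^2$ coefficient negative, (iii) controls $\ints v^2$, while $\beta<\Gamma_2\mu_3$ and $\alpha+\beta<2\gamma$ control $\ints(w-\wh)^2$ and $\ints(z-\zh)^2$; the two remainders are rendered nonpositive by (iv) and by $a_1a_4a_5<a_2a_3a_6$, the latter forcing $\Gamma_1\mu_2 a_4-\Gamma_2\mu_3 a_6<0$ so that $\left(\Gamma_1\mu_2 a_4-\Gamma_2\mu_3 a_6\right)\ints vw\le0$. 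This produces $\dt\mathcal{E}_4(t)\le-\sigma_4 f_4(t)$ for some $\sigma_4>0$, with $f_4(t)=\ints(\ut-\uh)^2+\ints\vt^2+\ints(\wt-\wh)^2+\ints(\zt-\zh)^2$. \emph{The main obstacle is precisely this bookkeeping of the first-order-in-$v$ terms}: one must check that after the two $\Gamma$-cancellations the only uncontrolled linear contribution is the single term $\left(\Gamma_1\mu_2+\Gamma_2\mu_3 a_6\wh-\mu_1 a_1\uh\right)\ints v$, so that hypothesis (iv) alone suffices, and that the leftover $vw$ term has a definite sign.

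Finally, integrating $\dt\mathcal{E}_4\le-\sigma_4 f_4$ over $(1,\infty)$ and using $\mathcal{E}_4\ge0$ gives $\int_1^\infty f_4(t)\,\mathrm{d}t<\infty$; the uniform $\mathcal{C}^{2+\theta,1+\theta/2}$ bounds of Lemma \ref{l.4.1} make $f_4$ uniformly continuous, so Lemma \ref{l.4.2} yields $f_4(t)\to0$, i.e. $L^2$-convergence of $u,w,z$ to $\uh,\wh,\zh$ and of $v$ to $0$. The uniform $\wsin$ bound \eqref{l.4.1.1} then upgrades this to $\lis$ through the Gagliardo--Nirenberg interpolation used in \eqref{l.4.3.9}, establishing \eqref{l.4.6.1}.
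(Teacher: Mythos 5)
Your proposal follows essentially the same route as the paper's proof: the identical Lyapunov functional (logarithmic entropies for the persisting components $u,w$, linear-plus-quadratic penalty for the vanishing $v$, quadratic for $z-\zh$), the same $\Gamma_1,\Gamma_2$ cancellations of the cross terms, the same Cauchy absorptions leading to $\dt\mathcal{E}_4\le-\sigma_4 f_4(t)$ plus the two signed remainders $\left(\mu_1a_1\uh-\Gamma_1\mu_2-\Gamma_2\mu_3a_6\wh\right)\ints v$ and $\left(\Gamma_2\mu_3a_6-\Gamma_1\mu_2a_4\right)\ints vw$, and the same integration/uniform-continuity/Gagliardo--Nirenberg conclusion. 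Your bookkeeping of the first-order-in-$v$ terms and the sign of the $vw$ remainder via $a_1a_4a_5<a_2a_3a_6$ matches the paper's \eqref{l.4.6.8}--\eqref{l.4.6.9} exactly (and your writing of the entropy terms as $\uh\ln\frac{u}{\uh}$, $\wh\ln\frac{w}{\wh}$ corrects an evident typo in the paper's displayed functional).
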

\begin{proof}
	We now introduce the following energy functional
	\begin{align*}
		\mathcal{E}_4(t):=&\ints\left(u-\uh- \ln \frac{u}{\uh}\right)+\Gamma_1\ints v+\frac{1}{2}\ints v^2+\Gamma_2\ints\left(w-\wh- \ln \frac{u}{\wh}\right)+\frac{1}{2}\ints (z-\zh)^2,\\
		=&\mathcal{I}_{41}(t)+\mathcal{I}_{42}(t)+\mathcal{I}_{43}t)+\mathcal{I}_{44}(t)+\mathcal{I}_{45}(t), \quad t>0.
	\end{align*}
	Analogous to reasoning in Lemma \ref{l.4.3}, it is clear that $\mathcal{E}_4(t)\geq 0$. Now we compute
	%
	\begin{align}
		\dt\mathcal{I}_{42}(t)
		=&\Gamma_1\mu_2\ints v- \Gamma_1\mu_2\ints v^2-\mu_1a_1\ints uv+\Gamma_1\mu_2a_4\ints vw,\label{l.4.6.3}
	\end{align}
	and for $\mathcal{I}_{43}(t)$
	\begin{align}
		\dt\mathcal{I}_{43}(t)
		\leq&-d_2\ints \mgv^2+\chi_2\nv_{\lis}\ints \gv\cdot\gz+\mu_2\ints v^2+\mu_2a_4\nw_{\lis}\ints v^2.\label{l.4.6.4}
	\end{align}	
	By a similar manner, we obtain
	\begin{equation} \begin{split} 
			\dt\mathcal{I}_{44}(t)
			=&-\Gamma_2d_3\wh\ints\left|\frac{\gw}{w}\right|^2-\Gamma_2\xi\wh\ints\frac{\gw}{w}\cdot\gz-\Gamma_2\mu_3\ints(w-\wh)^2 \\
			&-\Gamma_2\mu_3a_5\ints(w-\wh)(u-\uh)-\Gamma_2\mu_3a_6\ints(w-\wh)v.\label{l.4.6.5}
	\end{split} \end{equation} 
	Adding all the estimates \eqref{l.4.5.2}, \eqref{l.4.5.6} and \eqref{l.4.6.3}-\eqref{l.4.6.5}, we get
	\begin{equation} \begin{split}
			\dt\mathcal{E}_4(t)\leq&-d_1\uh\ints\left|\frac{\gu}{u}\right|^2+\chi_1\uh\nw_{\lis}\ints\frac{\gu}{u}\cdot\gv+\chi_1\uh\nv_{\lis}\ints\frac{\gu}{u}\cdot\gw \\
			&-d_2\ints \mgv^2+\chi_2\nv_{\lis}\ints \gv\cdot\gz-\Gamma_2d_3\wh\ints\left|\frac{\gw}{w}\right|^2-\Gamma_2\xi\wh\ints\frac{\gw}{w}\cdot\gz \\
			&-d_4\ints\mgz^2-\mu_1\ints(u-\uh)^2- \Gamma_1\mu_2\ints v^2+\mu_2\ints v^2+\mu_2a_4\nw_{\lis}\ints v^2 \\
			&-\Gamma_2\mu_3\ints(w-\wh)^2-\gamma\ints(z-\zh)^2-\mu_1a_1\uh \ints v \\
			&+\Gamma_1\mu_2\ints v+\Gamma_1\mu_2a_4\ints vw-\Gamma_2\mu_3a_6\ints wv+\Gamma_2\mu_3a_6\wh \ints v+\alpha\ints (z-\zh)v \\
			&+\beta\ints (z-\zh)(w-\wh).\label{l.4.6.7}
	\end{split} \end{equation}
	Using the Cauchy inequality, we get the following estimates
	\begin{align*}
		\chi_2\nv_{\lis}\ints \gv\cdot\gz\leq\: & \frac{d_2}{2}\ints\mgv^2+\frac{\chi_2^2\nv_{\lis}^2}{2d_2}\ints\mgz^2,\\
		\Gamma_2\xi\wh\ints\frac{\gw}{w}\cdot\gz\leq\:& \frac{\Gamma_2d_3\wh}{2}\ints\left|\frac{\gw}{w}\right|^2+\frac{\Gamma_2\xi^2\wh}{2d_3}\ints\mgz^2,\\
		\alpha\ints (z-\zh)v\leq\:& \frac{\alpha}{2}\ints (z-\zh)^2+\frac{\alpha}{2}\ints v^2,\\
		\beta\ints (z-\zh)(w-\wh)\leq\: &\frac{\beta}{2}\ints(z-\zh)^2+\frac{\beta}{2}\ints (w-\wh)^2.
	\end{align*}
	Substituting all the above estimates in \eqref{l.4.6.7}, we get
	\begin{align*}	
		\dt\mathcal{E}_4(t)\leq&-d_1\uh\ints\left|\frac{\gu}{u}\right|^2+\frac{d_1\uh}{2}\ints\left|\frac{\gu}{u}\right|^2+\frac{\chi_1^2\uh\nw_{\lis}^2}{2d_1}\ints\mgv^2+\frac{d_1\uh}{2}\ints\left|\frac{\gu}{u}\right|^2\\
		&+\frac{\chi_1^2\uh\nv_{\lis}^2}{2d_1}\ints\mgw^2-d_2\ints \mgv^2+\frac{d_2}{2}\ints\mgv^2+\frac{\chi_2^2\nv_{\lis}^2}{2d_2}\ints\mgz^2\\
		&-\Gamma_2d_3\wh\ints\left|\frac{\gw}{w}\right|^2+\frac{\Gamma_2d_3\wh}{2}\ints\left|\frac{\gw}{w}\right|^2+\frac{\Gamma_2\xi^2\wh}{2d_3}\ints\mgz^2-d_4\ints\mgz^2\\
		&-\mu_1\ints(u-\uh)^2-\Big( \Gamma_1\mu_2-\mu_2-\mu_2a_4\nw_{\lis}\Big)\ints v^2-\Gamma_2\mu_3\ints(w-\wh)^2\\
		&-\gamma\ints(z-\zh)^2-\left(\mu_1a_1\uh +\Gamma_1\mu_2+\Gamma_2\mu_3a_6\wh\right) \ints v-\left(\Gamma_2\mu_3a_6-\Gamma_1\mu_2a_4\right)\ints wv\\
		&+\frac{\alpha}{2}\ints (z-\zh)^2+\frac{\alpha}{2}\ints v^2+\frac{\beta}{2}\ints(z-\zh)^2+\frac{\beta}{2}\ints (w-\wh)^2.
	\end{align*}
	Combining the terms, one gets
	\begin{align*}	
		\dt\mathcal{E}_4(t)\leq&-\left(\frac{d_2}{2}-\frac{\chi_1^2\uh\nw_{\lis}^2}{2d_1}\right)\ints\mgv^2-\left(\frac{\Gamma_2d_3\wh}{2\nw_{\lis}^2}-\frac{\chi_1^2\uh\nv_{\lis}^2}{2d_1}\right)\ints\mgw^2\\
		&-\left(d_4-\frac{\chi_2^2\nv_{\lis}^2}{2d_2}-\frac{\Gamma_2\xi^2\wh}{2d_3}\right)\ints\mgz^2-\mu_1\ints(u-\uh)^2\\
		&-\Big( \Gamma_1\mu_2-\mu_2-\mu_2a_4\nw_{\lis}-\frac{\alpha}{2}\Big)\ints v^2-\left(\Gamma_2\mu_3-\frac{\beta}{2}\right)\ints(w-\wh)^2\\
		&-\left(\gamma-\frac{\alpha}{2}-\frac{\beta}{2}\right)\ints(z-\zh)^2-\left(\mu_1a_1\uh -\Gamma_1\mu_2-\Gamma_2\mu_3a_6\wh\right) \ints v\\
		&-\left(\Gamma_2\mu_3a_6-\Gamma_1\mu_2a_4\right)\ints wv.
	\end{align*}
	Finally, we arrive at
	\begin{equation} \begin{split}
			\dt\mathcal{E}_4(t)\leq& -\sigma_4\mathcal{F}_4(t)-\left(\mu_1a_1\uh -\Gamma_1\mu_2-\Gamma_2\mu_3a_6\wh\right) \ints v \\
			&-\left(\Gamma_2 \mu_3a_6-\Gamma_1\mu_2a_4\right)\ints vw, \quad t>0,\label{l.4.6.8}
	\end{split} \end{equation}
	where $\sigma_4>0$ with
	\begin{align*}
		\mathcal{F}_4(t)=&\ints(u-\uh)^2+\ints v^2+\ints (w-\wh)^2+\ints (z-\zh)^2+\ints|\gv|^2+\ints|\gw|^2+\ints \mgz^2.
	\end{align*}
	Now, \eqref{l.4.6.8} can be written as
	\begin{equation} \begin{split}
			\dt\mathcal{E}_4(t)	\leq& -\sigma_4 f_4(t)-\left(\mu_1a_1\uh -\Gamma_1\mu_2-\Gamma_2\mu_3a_6\wh\right) \ints v \\
			&-\left(\Gamma_2 \mu_3a_6-\Gamma_1\mu_2a_4\right)\ints vw,\label{l.4.6.9}
	\end{split} \end{equation}
	for $t>0$, where
	\begin{align*}
		f_4(t)=\ints(\ut-\uh)^2+\ints \vt^2+\ints (\wt-\wh)^2+\ints (\zt-\zh)^2, \qquad t>0.
	\end{align*}
	In view of Lemma \ref{l.4.3}, the proof is similar.
\end{proof}

\subsection{Proof of the main theorems}
{\bf Proof of Theorem \ref{t.2}.}
Let $\mathcal{H}(u)=u-\ue \ln  u$, for $u>0$. By applying L'Hôpital's rule, we obtain
\begin{align*}
	\lim\limits_{u\to\ue}\frac{\mathcal{H}(u)-\mathcal{H}(\ue)}{(u-\ue)^2}=\frac{1-\frac{\ue}{u}}{2(u-\ue)}=\frac{1}{2\ue}.
\end{align*}
We can therefore select $t_0 > 0$ and use the Taylor expansion to obtain
\begin{align*}
	\ints\left(u-\ue-\ue \ln\frac{u}{\ue}\right)=\ints(\mathcal{H}(u)-\mathcal{H}(\ue))
	\leq\frac{1}{2\ue}\ints(u-\ue)^2, \quad \forall\, t>t_0.
\end{align*}
From the above estimate, we arrive at
\begin{align}
	\frac{1}{4\ue}\ints(u-\ue)^2\leq \ints\left(u-\ue-\ue \ln\frac{u}{\ue}\right)\leq \frac{3}{4\ue}\ints(u-\ue)^2, \quad \forall\, t>t_0.\label{t.1.2.1}
\end{align}
Similarly, we obtain
\begin{align}
	\frac{1}{4\ve}\ints(v-\ve)^2\leq \ints\left(v-\ve-\ve \ln\frac{v}{\ve}\right)\leq \frac{3}{4\ve}\ints(v-\ve)^2, \quad \forall\, t>t_0\label{t.1.2.2}
\end{align}
and
\begin{align}
	\frac{1}{4\we}\ints(w-\we)^2\leq \ints\left(w-\we-\we \ln\frac{w}{\we}\right)\leq \frac{3}{4\we}\ints(w-\we)^2, \quad \forall\, t>t_0.\label{t.1.2.3}
\end{align}
Taking into account the inequalities \eqref{t.1.2.1}-\eqref{t.1.2.3},  it is clear that for all $t>t_0$ and with $\kappa_1, \kappa_2>0$, we have
\begin{align}
	\kappa_1f_1(t)\leq \mathcal{E}_1(t)\leq \kappa_2 f_1(t).\label{t.1.2.4}
\end{align}
From \eqref{l.4.3.8} and right-hand side of \eqref{t.1.2.4}, we get
\begin{align*}
	\dt\mathcal{E}_1(t)\leq- \sigma_1 f_1(t)\leq -\frac{\sigma_1}{\kappa_2}\mathcal{E}_1(t), \qquad t>t_0.
\end{align*}
Solving the above differential inequality gives
\begin{align*}
	\mathcal{E}_1(t)\leq C_1e^{-\frac{\sigma_1}{\kappa_2}t}, \qquad t>t_0,
\end{align*}
where $C_1>0$. Now, the left-hand side of \eqref{t.1.2.4} gives
\begin{align*}
	f_1(t)\leq \frac{1}{\kappa_1} \mathscr{E}_1(t)\leq \frac{C_1}{\kappa_1} e^{-\frac{\sigma_1}{\kappa_2}t}, \qquad t>t_0.
\end{align*}
In view of \eqref{l.4.3.9} and then Lemma \ref{l.4.1}, finally we arrive at
\begin{align*}
	\big\|\ut-\ue\big\|_{\lis}+\big\|\vt-\ve\big\|_{\lis}+\big\|\wt-\we\big\|_{\lis}+\big\|\zt-\ze\big\|_{\lis}\leq C_2e^{-\frac{\sigma_1}{\kappa_2}t},
\end{align*}
for all $t>\tin$, with $C_2>0$. This completes the proof.\hfill \qedsymbol\\

{\bf Proof of Theorem \ref{t.3}.}
Choose $t_0>0$, we can apply a similar approach to the one used to derive \eqref{t.1.2.1} to obtain the following results for all $t>t_0$
\begin{align}
	\frac{1}{4}\ints\left(u-1\right)^2\leq \ints\left(u-1-\ln u\right)\leq \frac{3}{4}\ints\left(u-1\right)^2, \label{t.1.3.1}
\end{align}
\begin{align}
	\frac{1}{2}\ints v^2+\frac{1}{2}\ints v\leq \ints v\leq 2\ints v^2+2\ints v,  \label{t.1.3.2}
\end{align}
and
\begin{align}
	\frac{1}{2}\ints w^2+\frac{1}{2}\ints w\leq \ints w\leq 2\ints w^2+2\ints w.\label{t.1.3.3}
\end{align}
Considering the above inequalities \eqref{t.1.3.1}-\eqref{t.1.3.3}, it is apparent that 
\begin{align}
	\kappa_3f_2(t)\leq\mathcal{E}_2(t)\leq \kappa_4\left(f_2(t)+\ints v+\ints w+\ints vw\right), \label{t.1.3.4}
\end{align}
for every $t>\tin$, with $\kappa_3, \kappa_4>0$. From \eqref{l.4.4.10} and the right-hand side of \eqref{t.1.3.4} with $ \sigma_2<\min\{\mu_1a_1-\Gamma_1\mu_2, \mu_1a_2-\Gamma_2\mu_3, \Gamma_2\mu_3a_6-\Gamma_1\mu_2a_4\}$, we obtain
\begin{align*}
	\dt\mathcal{E}_2(t)
	\leq& -\frac{\sigma_2}{\kappa_4}\mathcal{E}_2(t)-\Big((\mu_1a_1-\Gamma_1\mu_2)-\sigma_2\Big)\ints v-\Big((\mu_1a_2-\Gamma_2\mu_3)-\sigma_2\Big)\ints w\\
	&-\Big((\Gamma_2\mu_3a_6-\Gamma_1\mu_2a_4)-\sigma_2\Big)\ints vw\\
	\leq& -\frac{\sigma_2}{\kappa_4}\mathcal{E}_2(t), \qquad \forall\ t>t_0.
\end{align*}
As a consequence, $\mathcal{E}_2(t)\leq C_3e^{-\frac{\sigma_2}{\kappa_4}t}$, for all $t>t_0$, where $C_3>0$. Now, by examining the left-hand side of \eqref{t.1.3.4}, we observe that
\begin{align*}
	f_2(t)\leq \frac{1}{\kappa_3} \mathcal{E}_2(t)\leq \frac{C_3}{\kappa_3}e^{-\frac{\sigma_2}{\kappa_4}t}.
\end{align*}
Using an argument similar to the proof of Theorem \ref{t.2}, we arrive at
\begin{align*}
	\big\|\ut-1\big\|_{\lis}+\big\|\vt\big\|_{\lis}+\big\|\wt\big\|_{\lis}+\big\|\zt\big\|_{\lis}\leq C_4e^{-\frac{\sigma_2}{\kappa_4}t},
\end{align*}
for all $t>\tin$, with $C_4>0$. This completes the proof.\hfill\qedsymbol\\

{\bf Proof of Theorem \ref{t.4}. (1)}
We select $t_0 > 0$. Following the proof of Theorem \ref{t.2}  and using the Taylor expansion, we find that
\begin{align*}
	\ints\left(u-\ub-\ub \ln\frac{u}{\ub}\right)=\ints(\mathcal{H}(u)-\mathcal{H}(\ub))
	\leq\frac{1}{2\ub}\ints(u-\ub)^2, \quad \forall\, t>t_0.
\end{align*}
From the above estimate, we arrive at
\begin{align}
	\frac{1}{4\ub}\ints(u-\ub)^2\leq \ints\left(u-\ub-\ub \ln\frac{u}{\ub}\right)\leq \frac{3}{4\ub}\ints(u-\ub)^2, \quad \forall\, t>t_0.\label{t.1.4.1}
\end{align}
Similarly, we obtain
\begin{align}
	\frac{1}{4\vb}\ints(v-\vb)^2\leq \ints\left(v-\vb-\vb \ln\frac{v}{\vb}\right)\leq \frac{3}{4\vb}\ints(v-\vb)^2, \quad \forall\, t>t_0\label{t.1.4.2}
\end{align}
and
\begin{align}
	\frac{1}{2}\ints w^2+\frac{1}{2}\ints w\leq \ints w\leq 2\ints w^2+2\ints w, \quad \forall\, t>t_0.\label{t.1.4.3}
\end{align}
Taking into account the inequalities \eqref{t.1.4.1}-\eqref{t.1.4.2},  it is clear that for all $t>t_0$ and with $\kappa_1, \kappa_2>0$, we have
\begin{align}
	\kappa_5f_3(t)\leq \mathcal{E}_3(t)\leq \kappa_6 \left(f_3(t)+\ints w+\ints vw\right).\label{t.1.4.4}
\end{align}
From \eqref{l.4.5.9} and the right-hand side of \eqref{t.1.4.4} with $\sigma_3<\min\{\mu_1a_2\ub+\Gamma_1\mu_2a_4\vb-\Gamma_2\mu_3, \Gamma_2\mu_3a_6-\Gamma_1\mu_2a_4\}$, we get
\begin{align*}
	\dt\mathcal{E}_3(t)\leq& -\frac{\sigma_3}{\kappa_6}\mathcal{E}_3(t)-\Big((\mu_1a_2\ub+\Gamma_1\mu_2a_4\vb-\Gamma_2\mu_3)-\sigma_3\Big)\ints w \\
	&-\Big((\Gamma_2\mu_3a_6-\Gamma_1\mu_2a_4)-\sigma_6\Big)\ints vw \\
	\leq&-\frac{\sigma_3}{\kappa_6}\mathcal{E}_3(t), \qquad t>t_0.
\end{align*}
Solving the above differential inequality gives
\begin{align*}
	\mathcal{E}_3(t)\leq C_1e^{-\frac{\sigma_3}{\kappa_6}t}, \qquad t>t_0.
\end{align*}
where $C_1>0$. Now, the left-hand side of \eqref{t.1.4.4} gives
\begin{align*}
	f_3(t)\leq \frac{1}{\kappa_5} \mathscr{E}_3(t)\leq \frac{C_1}{\kappa_5} e^{-\frac{\sigma_3}{\kappa_6}t}, \qquad t>t_0.
\end{align*}
In view of \eqref{l.4.3.9} and then Lemma \ref{l.4.5}, finally we arrive at
\begin{align*}
	\big\|\ut-\ub\big\|_{\lis}+\big\|\vt-\vb\big\|_{\lis}+\big\|\wt\big\|_{\lis}+\big\|\zt-\zb\big\|_{\lis}\leq C_2e^{-\frac{\sigma_3}{\kappa_6}t},
\end{align*}
for all $t>\tin$, with $C_2>0$. This completes the proof.\hfill\qedsymbol\\

{\bf Proof of Theorem \ref{t.4}. (2)}
Let us choose $t_0>0$, we can apply a similar approach to the one used to derive \eqref{t.1.4.1} to obtain the following result
\begin{align}
	\frac{1}{4\uh}\ints(u-\uh)^2\leq \ints\left(u-\uh-\uh \ln\frac{u}{\uh}\right)\leq \frac{3}{4\uh}\ints(u-\uh)^2, \quad \forall\, t>t_0.\label{t.1.5.1}
\end{align}
Following the same procedure, we obtain
\begin{align}
	\frac{1}{4\wh}\ints(w-\wh)^2\leq \ints\left(w-\wh-\wh \ln\frac{w}{\wh}\right)\leq \frac{3}{4\wh}\ints(w-\wh)^2, \quad \forall\, t>t_0\label{t.1.5.2}
\end{align}
and
\begin{align}
	\frac{1}{2}\ints v^2+\frac{1}{2}\ints v\leq \ints v\leq 2\ints v^2+2\ints v, \quad \forall\, t>t_0.\label{t.1.5.3}
\end{align}
Considering the above inequalities \eqref{t.1.5.1}-\eqref{t.1.5.3}, it is apparent that 
\begin{align}
	\kappa_7f_4(t)\leq\mathcal{E}_4(t)\leq \kappa_8\left(f_4(t)+\ints v+\ints vw\right), \quad t>\tin\label{t.1.5.4}
\end{align}
with $\kappa_7, \kappa_8>0$. From \eqref{l.4.6.9} and right-hand side of \eqref{t.1.5.4} with \\
$ \sigma_4<\min\{\mu_1a_1\uh -\Gamma_1\mu_2-\Gamma_2\mu_3a_6\wh, \Gamma_2 \mu_3a_6-\Gamma_1\mu_2a_4\},$ we obtain
\begin{align*}
	\dt\mathcal{E}_4(t)	\leq& -\sigma_4 f_2(t)-\Big(\left(\mu_1a_1\uh -\Gamma_1\mu_2-\Gamma_2\mu_3a_6\wh\right)-\sigma_4\Big) \ints v 
	-\Big(\left(\Gamma_2 \mu_3a_6-\Gamma_1\mu_2a_4\right)-\sigma_4\Big)\ints vw,\\
	\leq& -\frac{\sigma_4}{\kappa_8}\mathcal{E}_4(t), \qquad t>t_0.
\end{align*}
Consequently, $\mathcal{E}_4(t)\leq C_3e^{-\frac{\sigma_4}{\kappa_8}t}$, for every $t>t_0$, where $C_3>0$. Now, by examining the left-hand side of \eqref{t.1.5.4}, we observe that
\begin{align*}
	f_4(t)\leq \frac{1}{\kappa_7} \mathcal{E}_4(t)\leq \frac{C_3}{\kappa_7}e^{-\frac{\sigma_4}{\kappa_8}t}.
\end{align*}
By using a similar argument as in the proof of Theorem \ref{t.4}, we arrive at
\begin{align*}
	\big\|\ut-\uh\big\|_{\lis}+\big\|\vt\big\|_{\lis}+\big\|\wt-\wh\big\|_{\lis}+\big\|\zt-\zh\big\|_{\lis}\leq C_4e^{-\frac{\sigma_4}{\kappa_8}t},
\end{align*}
for all $t>\tin$, with $C_4>0$. This completes the proof.\hfill\qedsymbol

\section{Numerical examples}
This section presents the results of numerical experiments conducted using the Finite Difference Method to investigate the behaviour of solutions to the predator-prey chemo-alarm-taxis system in both two- and three-dimensional spaces. We consider the initial-boundary value problem for the predator-prey chemo-alarm-taxis system \eqref{1.1}, with radially symmetric bell-shaped initial condition
\begin{align*}
	&u(\cdot,0)=e^{-0.1\sum_{i=1}^{n} x_i^2}, && v(\cdot,0)=3e^{-0.3\sum_{i=1}^{n} x_i^2},\\
	& w(\cdot,0)=2e^{-0.2\sum_{i=1}^{n} x_i^2},	&&z(\cdot,0)=e^{-0.1\sum_{i=1}^{n} x_i^2}.
\end{align*}
In all of the examples presented below, we set $d_i=1, i=1,2,3,4$, $\chi_1=\chi_2=\xi=\mu_1=\mu_2=\mu_3=\alpha=\beta=1$, $\gamma=2$ and adopt a uniform grid $\Delta x_i=\frac{1}{101}, i=1,2$ for 2D and $\Delta x_i=\frac{1}{50}, i=1,2,3$ for 3D within a domain $[-0.5, 0.5]^n$, where $n=2, 3$.

\begin{example}[Coexistence state of the species]\label{5.1}
	Let $a_i=0.5, i=1,2,3,4$, then the conditions given in \eqref{0.1} are satisfied. Therefore the solution $(u, v, w, z)$ converges to the coexistence steady state $(\ue, \ve, \we, \ze)$ as $t\to\infty$.
\end{example}
In the case, the solution $(u, v, w, z)$ of system \eqref{1.1} converges to the coexistence steady state (1.286877, 0.428973, 0.144368, 0.286793) as $t\to 30$, as illustrated in Figure 1. \vskip -0.5cm
\begin{figure}[H]
	\centering
	\subfloat[$u(x_1,x_2,t)$]{\includegraphics[width=5cm, height=4.25cm]{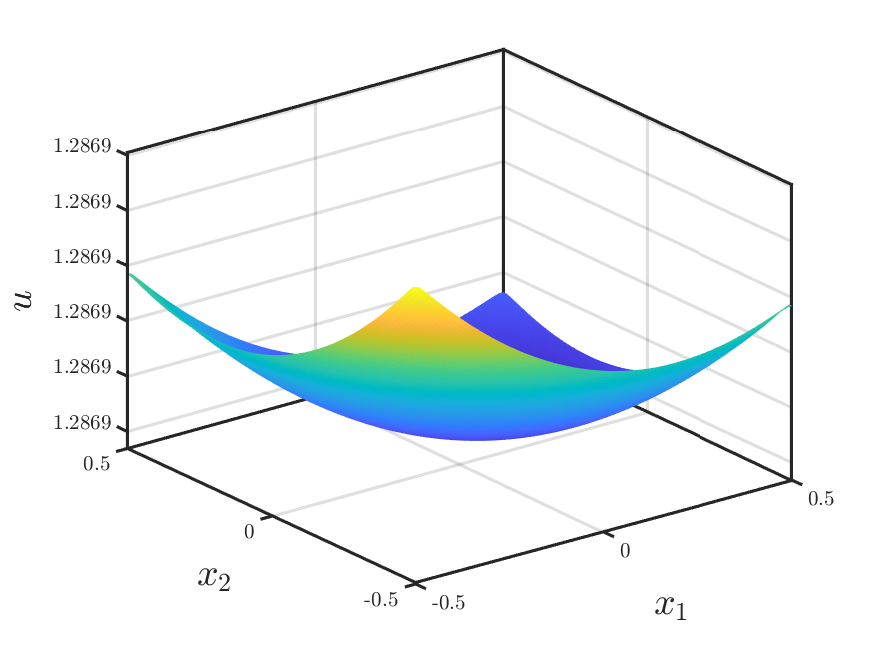}}\quad
	\subfloat[$v(x_1,x_2,t)$]{\includegraphics[width=5cm, height=4.25cm]{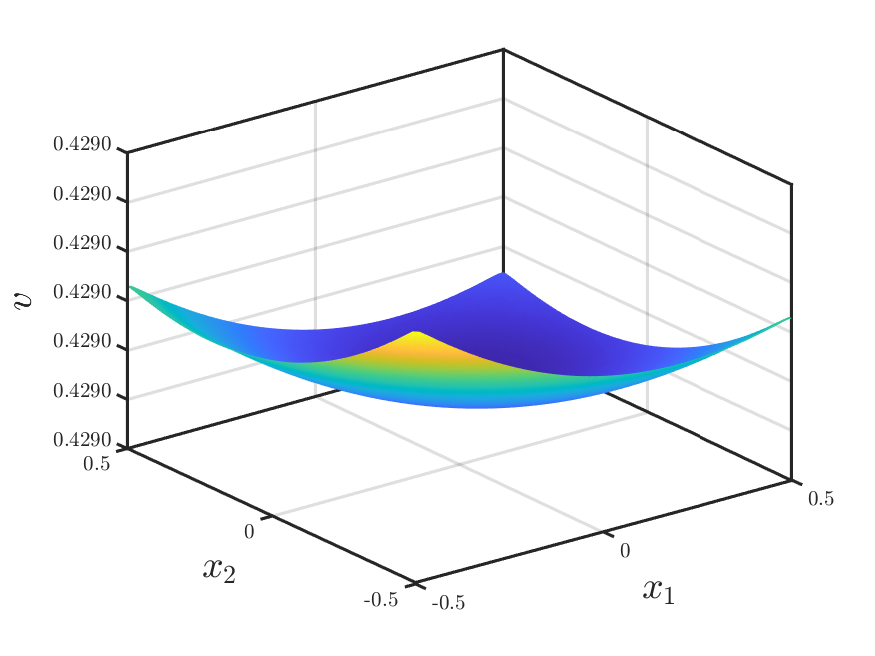}}\quad
	\subfloat[$w(x_1,x_2,t)$]{\includegraphics[width=5cm, height=4.25cm]{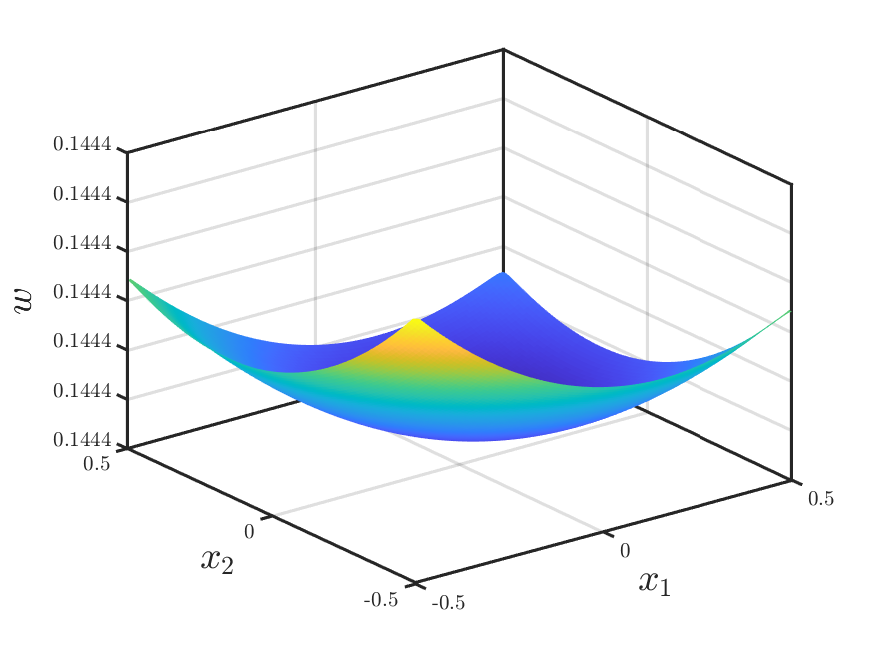}}\\
	\centering
	\subfloat[$z(x_1,x_2,t)$]{\includegraphics[width=5cm, height=4.25cm]{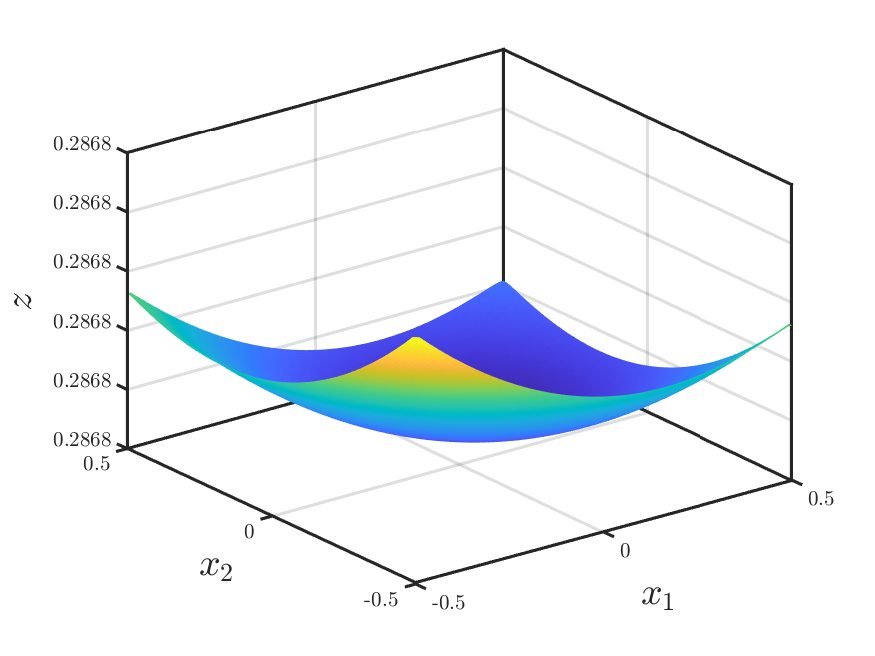}}\qquad
	\subfloat[Asymptotic behaviour]{\includegraphics[width=6cm, height=4.25cm]{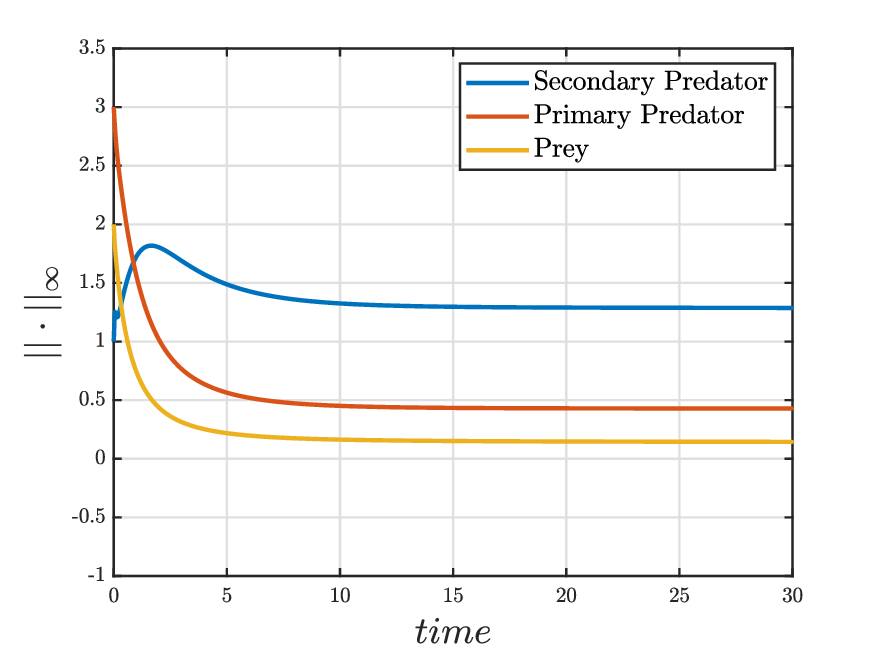}}
	\caption{Populations at $t=30$ in 2D.}
	\label{5.1.1}
\end{figure}
Figure 2 depicts the solution $(u, v, w, z)$ of system \eqref{1.1} approaches the coexistence steady state (1.286886, 0.428981, 0.144365, 0.286798) as $t\to 20$.
\begin{figure}[H]
		\centering
	\subfloat[$u(x_1,x_2,x_3,t)$]{\includegraphics[width=5cm, height=4.25cm]{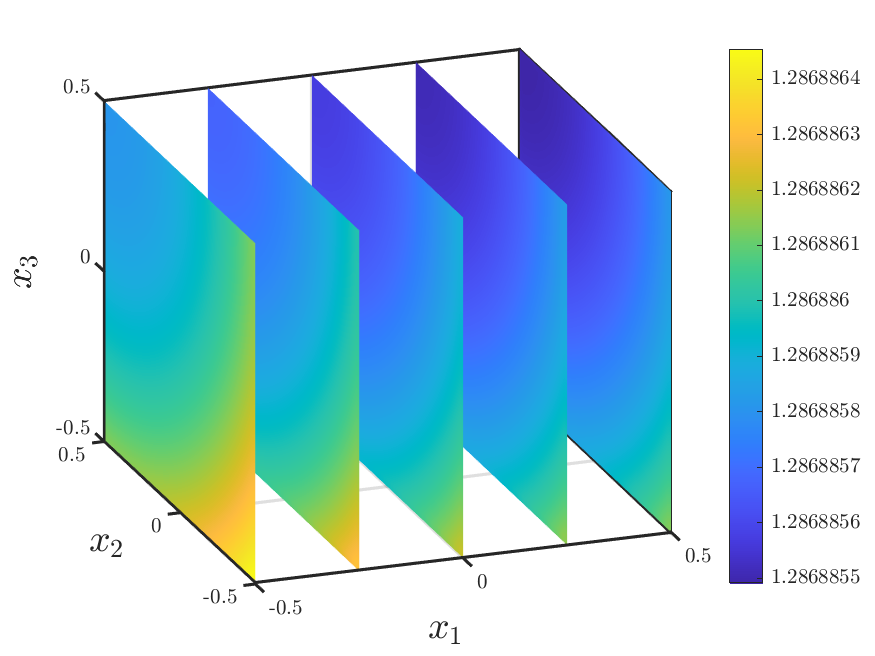}}\qquad
	\subfloat[$v(x_1,x_2,x_3,t)$]{\includegraphics[width=5cm, height=4.25cm]{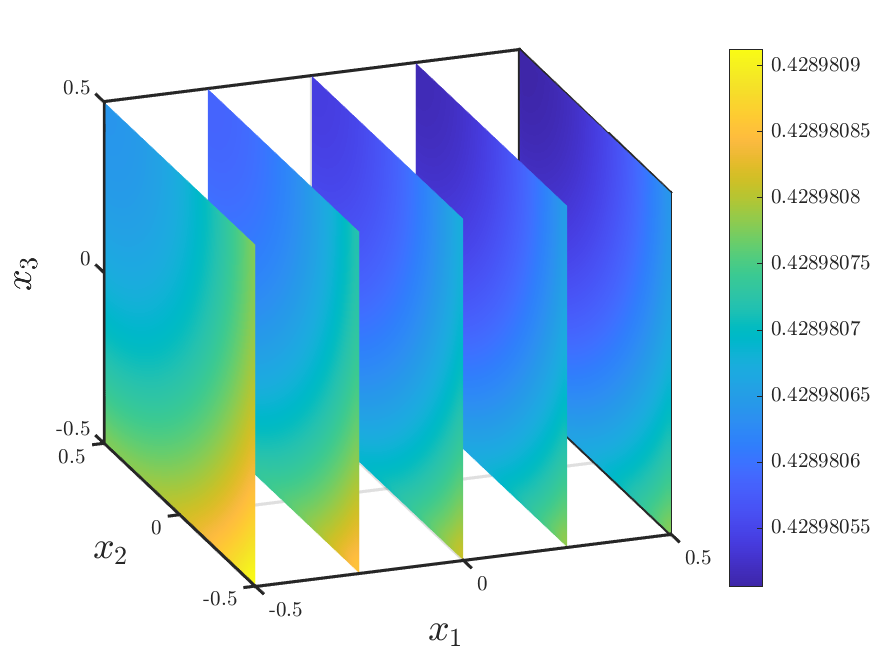}}\quad
	\subfloat[$w(x_1,x_2,x_3,t)$]{\includegraphics[width=5cm, height=4.25cm]{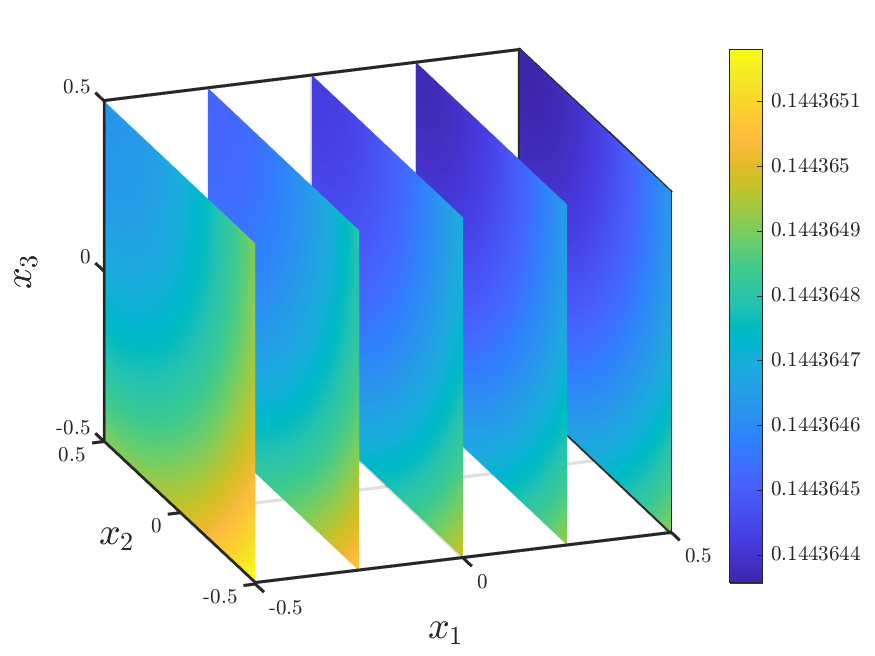}}\\
	\centering
	\subfloat[$z(x_1,x_2,x_3,t)$]{\includegraphics[width=5cm, height=4.25cm]{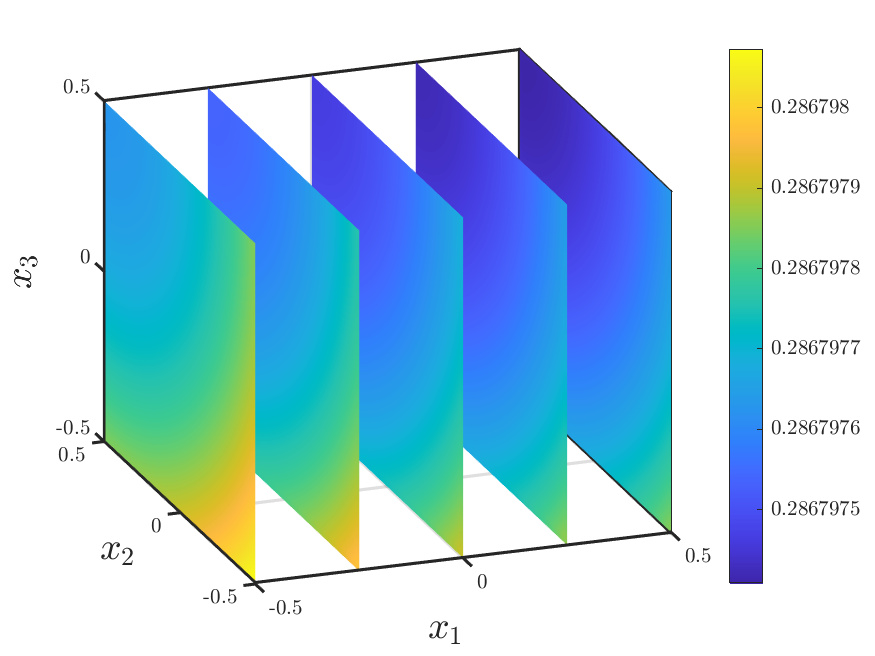}}\qquad
	\subfloat[Asymptotic behaviour]{\includegraphics[width=6cm, height=4.5cm]{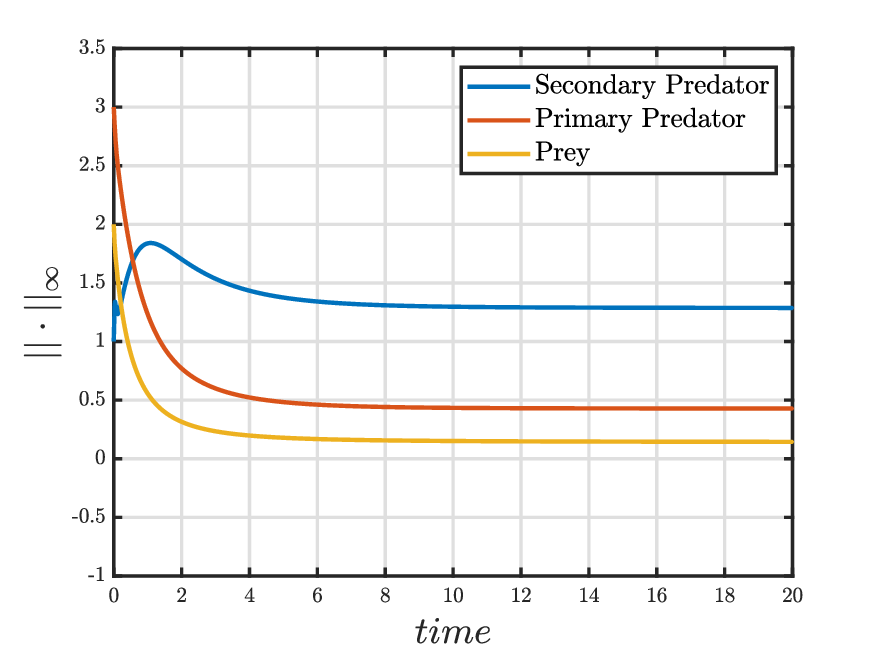}}
	\caption{Populations at $t=20$ in 3D.}
	\label{5.1.2}
\end{figure}

\begin{example}[Secondary predator only existence state]\label{5.2}
	Let $a_1=a_4=0.01, a_2=1, a_3=1.5, a_5=a_6=2$, then the conditions given in  \eqref{0.1} are not satisfied. Therefore, the solution $(u, v, w, z)$ converges to the trivial steady state $(1, 0, 0, 0)$  as $t\to\infty$.
\end{example}\vskip -0.5cm
\begin{figure}[H]
	\centering
	\subfloat[$u(x_1,x_2,t)$]{\includegraphics[width=5cm, height=4.25cm]{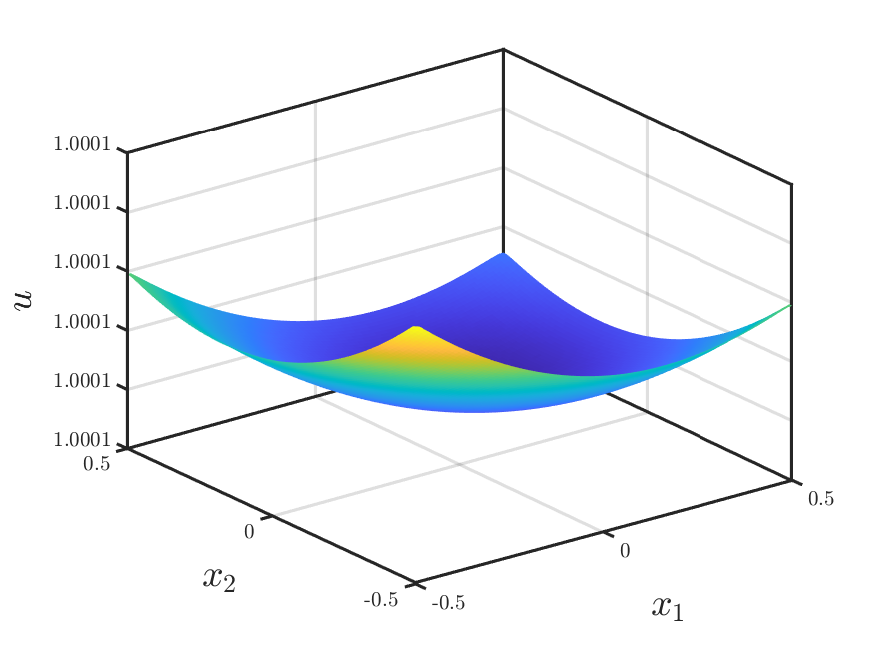}}\quad
	\subfloat[$v(x_1,x_2,t)$]{\includegraphics[width=5cm, height=4.25cm]{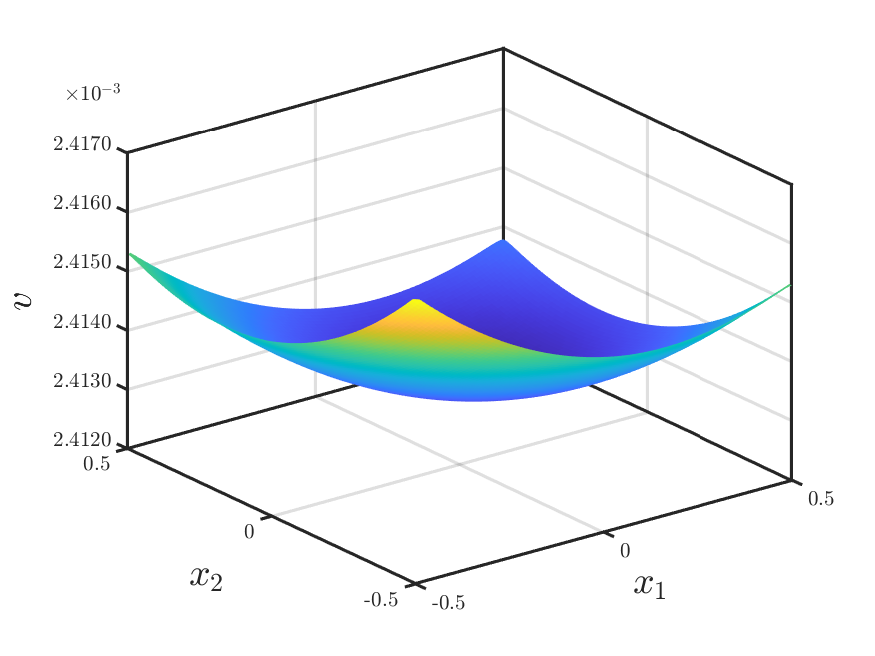}}\quad
	\subfloat[$w(x_1,x_2,t)$]{\includegraphics[width=5cm, height=4.25cm]{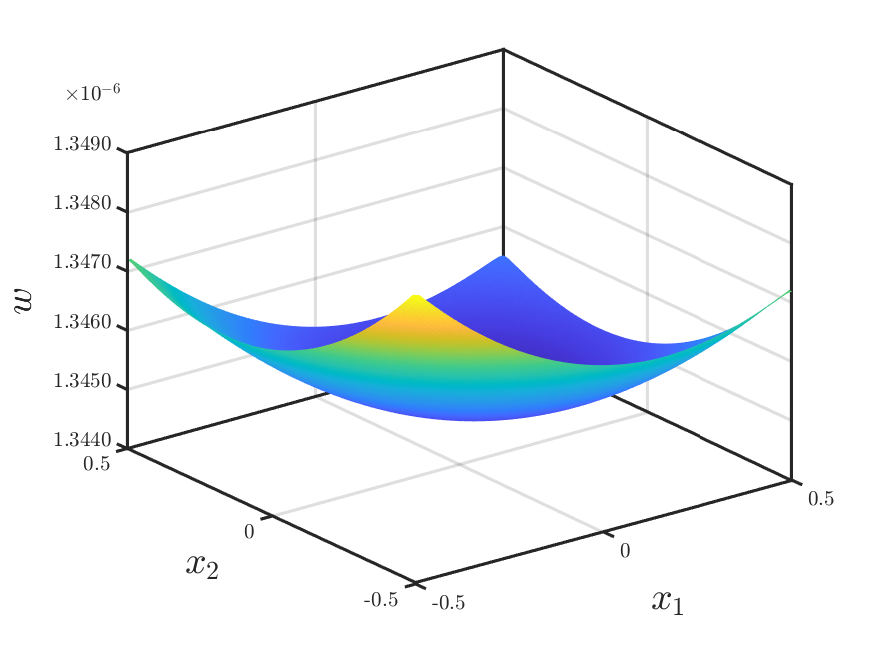}}\\
	\centering
	\subfloat[$z(x_1,x_2,t)$]{\includegraphics[width=5cm, height=4.25cm]{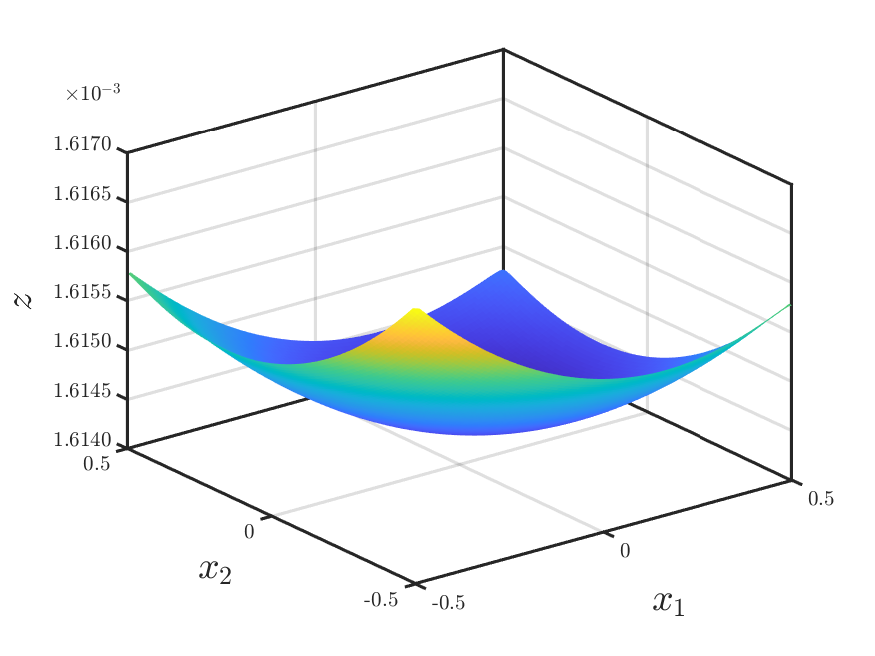}}\qquad
	\subfloat[Asymptotic behaviour]{\includegraphics[width=6cm, height=4.5cm]{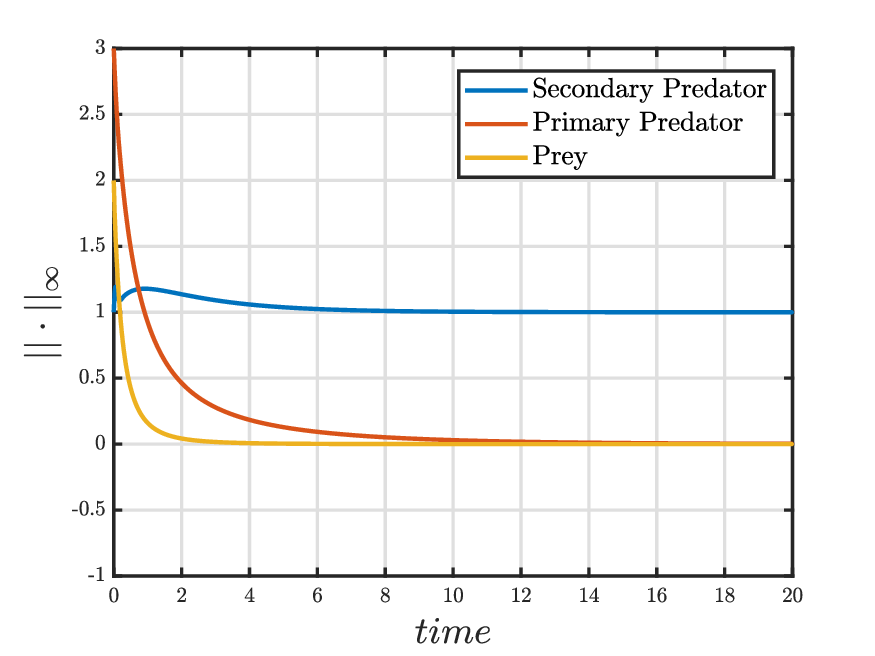}}
	\caption{Populations at $t=20$ in 2D.}
	\label{5.2.1}
\end{figure}
Here we observe that the solution $(u, v, w, z)$ of the system \eqref{1.1} converges to the trivial steady state (1, 0, 0, 0) as $t\to 20$, as shown in Figure 3. 
Figure 4 depicts the solution $(u, v, w, z)$ of system \eqref{1.1} approaches the trivial steady state (1, 0, 0, 0) as $t\to 20$.\vskip -0.5cm
\begin{figure}[H]
\centering
	\subfloat[$u(x_1,x_2,x_3,t)$]{\includegraphics[width=5cm, height=4.25cm]{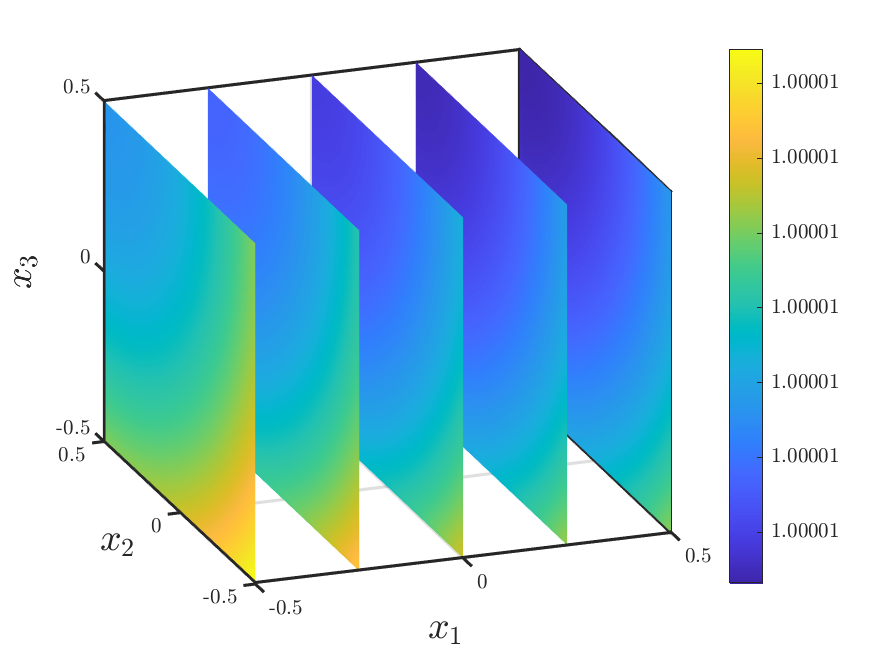}}\quad
	\subfloat[$v(x_1,x_2,x_3,t)$]{\includegraphics[width=5cm, height=4.25cm]{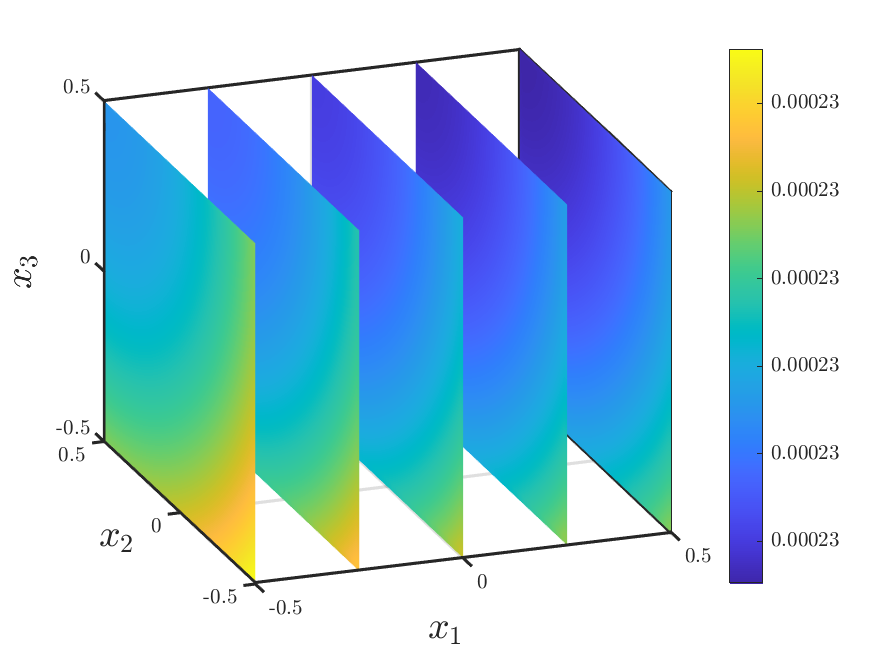}}\quad
	\subfloat[$w(x_1,x_2,x_3,t)$]{\includegraphics[width=5cm, height=4.5cm]{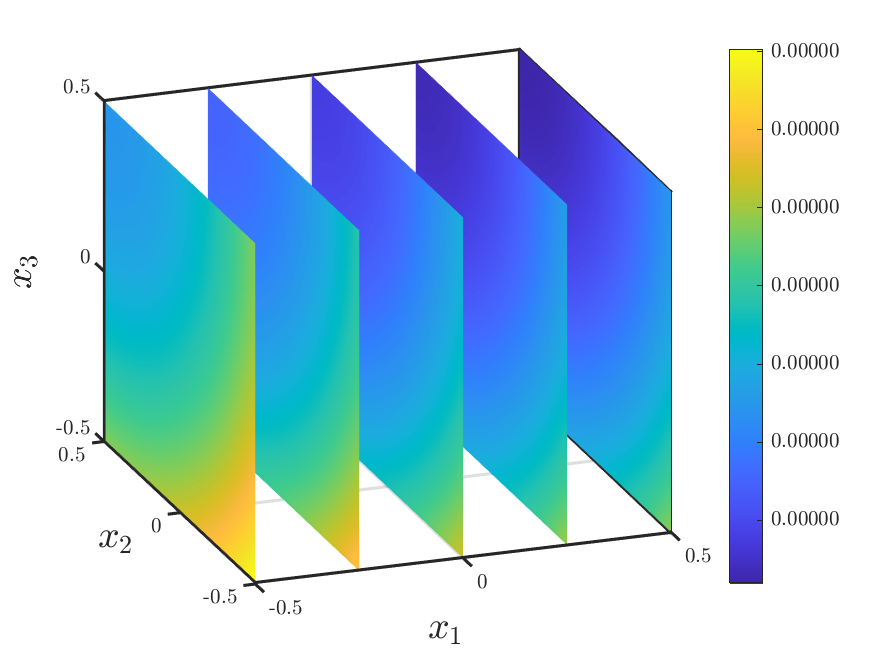}}\\
	\centering
	\subfloat[$z(x_1,x_2,x_3,t)$]{\includegraphics[width=5cm, height=4.25cm]{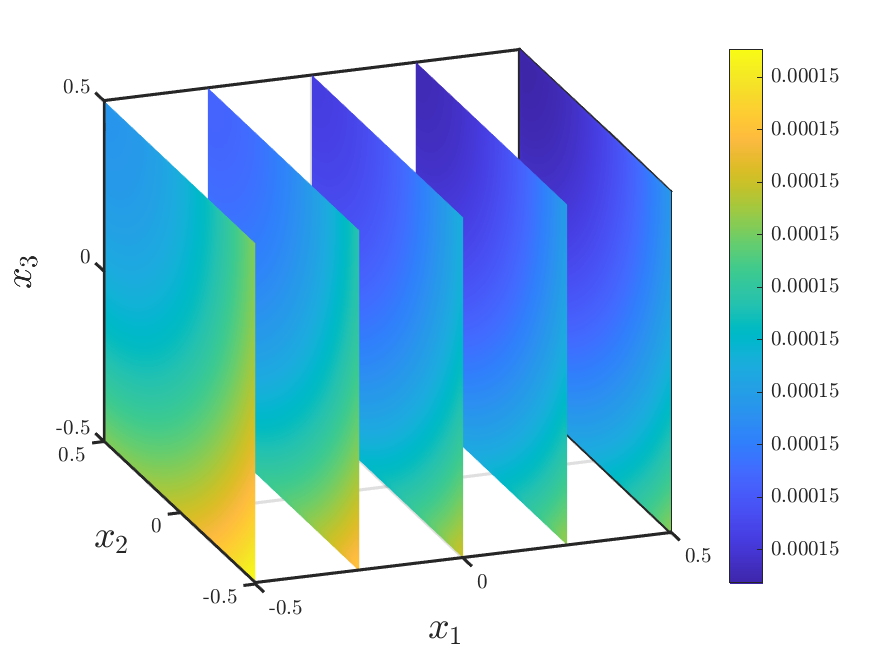}}\qquad
	\subfloat[Asymptotic behaviour]{\includegraphics[width=6cm, height=4.25cm]{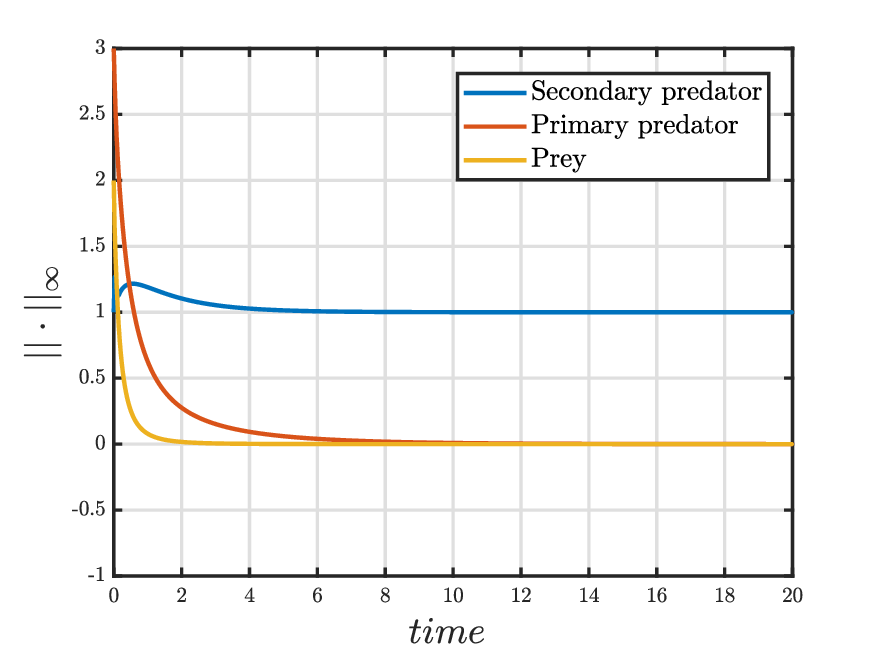}}
	\caption{Populations at $t=20$ in 3D.}
	\label{5.2.2}
\end{figure}

\begin{example}[Prey vanishing state]\label{5.3}
	Let $a_1=a_3=0.01, a_2=1, a_4=3, a_5=a_6=2$,  then conditions given in \eqref{0.1} are not satisfied and $a_3<1$. Therefore, the solution $(u, v, w, z)$ converges to the semi-coexistence (prey vanishing) steady state $(\bar{u}, \bar{v}, \bar{w}, \bar{z})$ as $t\to\infty$.
\end{example}\vskip -0.5cm
\begin{figure}[H]
	\centering
	\subfloat[$u(x_1,x_2,t)$]{\includegraphics[width=5cm, height=4.25cm]{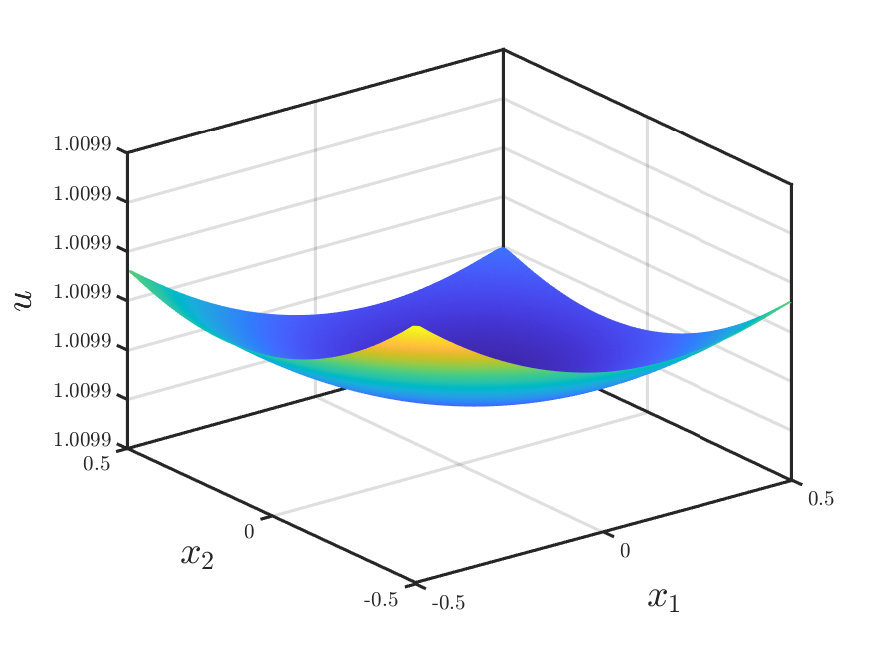}}\qquad
	\subfloat[$v(x_1,x_2,t)$]{\includegraphics[width=5cm, height=4.25cm]{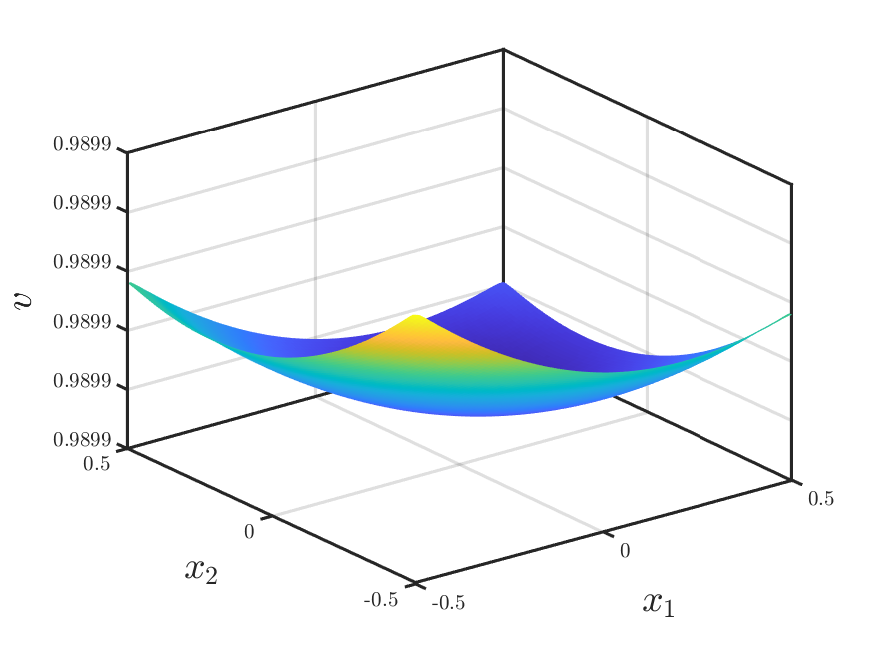}}\quad
	\subfloat[$w(x_1,x_2,t)$]{\includegraphics[width=5cm, height=4.25cm]{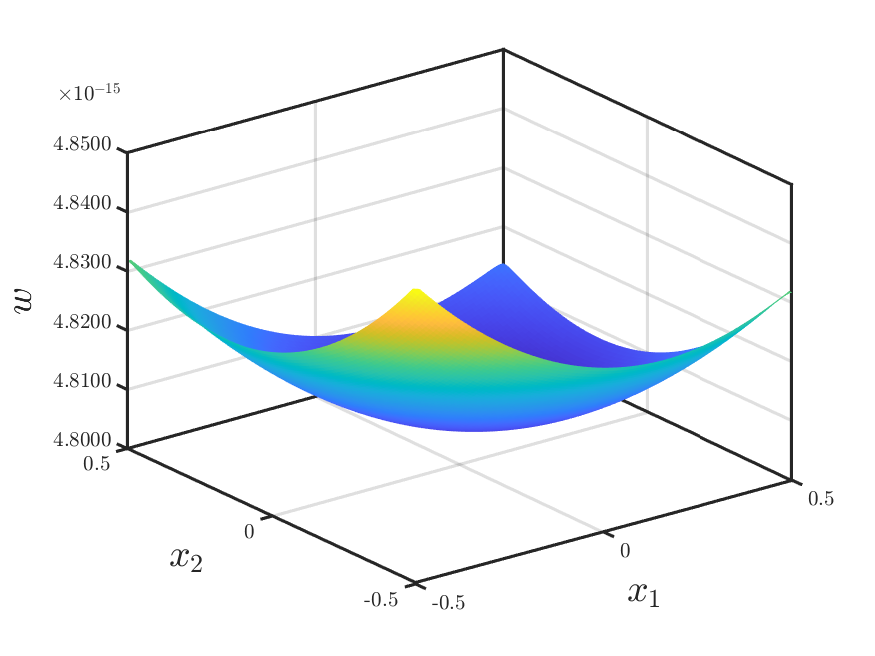}}\\
	\centering
	\subfloat[$z(x_1,x_2,t)$]{\includegraphics[width=5cm, height=4.25cm]{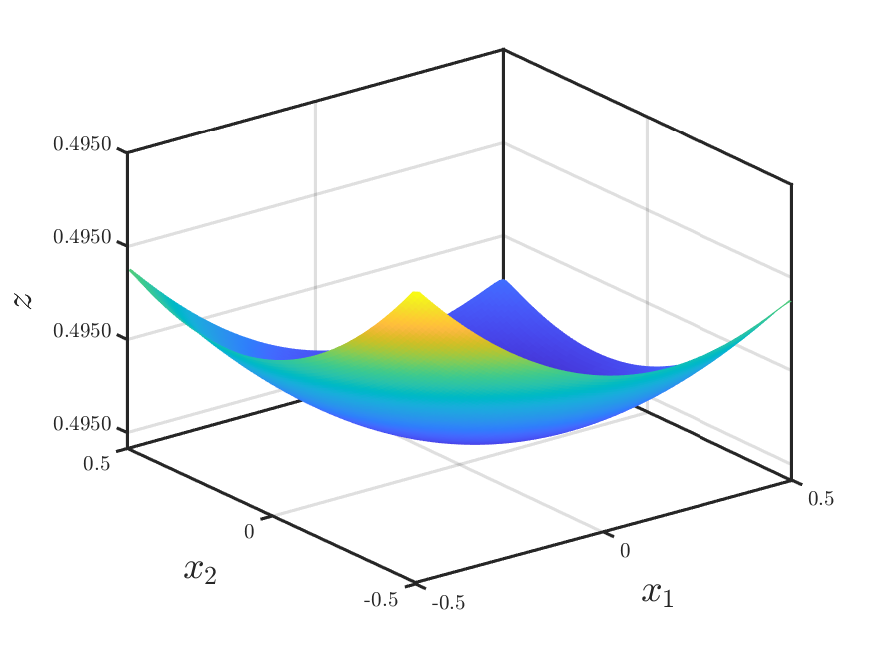}}\qquad
	\subfloat[Asymptotic behaviour]{\includegraphics[width=6cm, height=4.25cm]{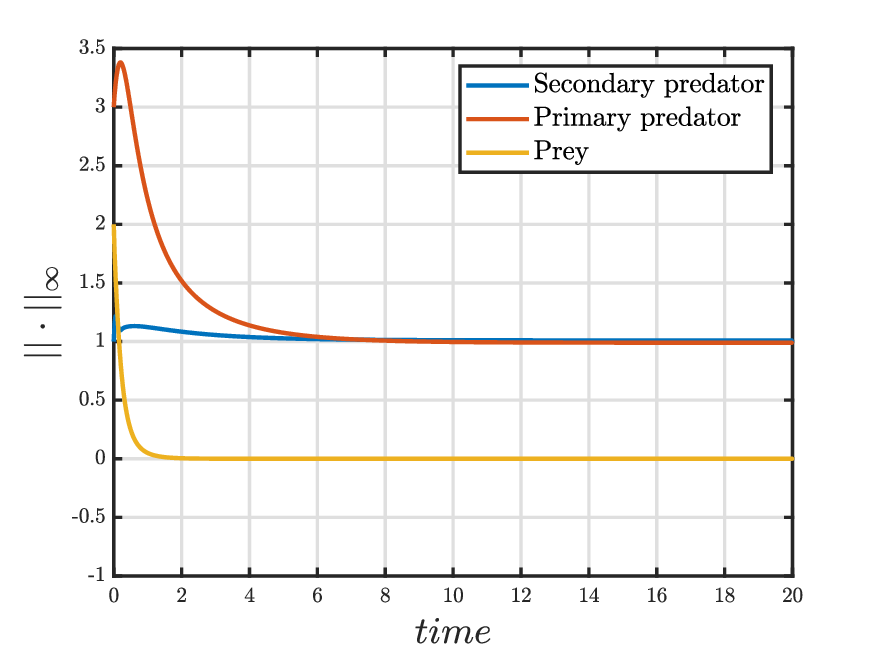}}
	\caption{Populations at $t=20$ in 2D.}
	\label{5.3.1}
\end{figure}
In the case, the solution $(u, v, w, z)$ of system \eqref{1.1} converges to the prey vanishing state (1.009899, 0.989901, 0, 0.494951) as $t\to 20$, as illustrated in Figure 5.
Figure 6 depicts the solution $(u, v, w, z)$ of system \eqref{1.1} approaches the prey vanishing state (1.009899, 0.989902, 0, 0.494951) as $t\to 20$.\vskip -0.5cm
\begin{figure}[H]
	\centering
	\subfloat[$u(x_1,x_2,x_3,t)$]{\includegraphics[width=5cm, height=4.25cm]{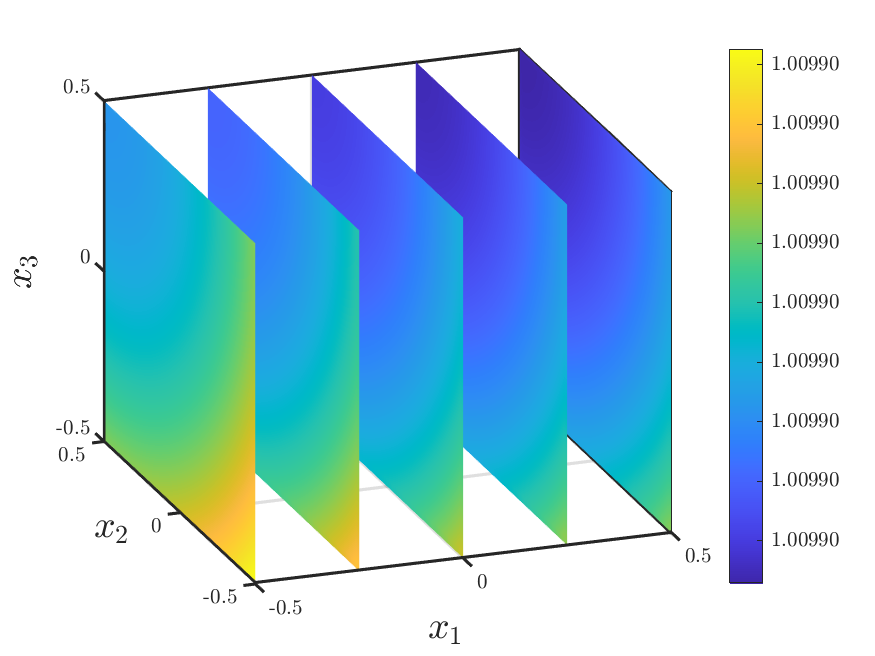}}\quad
	\subfloat[$v(x_1,x_2,x_3,t)$]{\includegraphics[width=5cm, height=4.25cm]{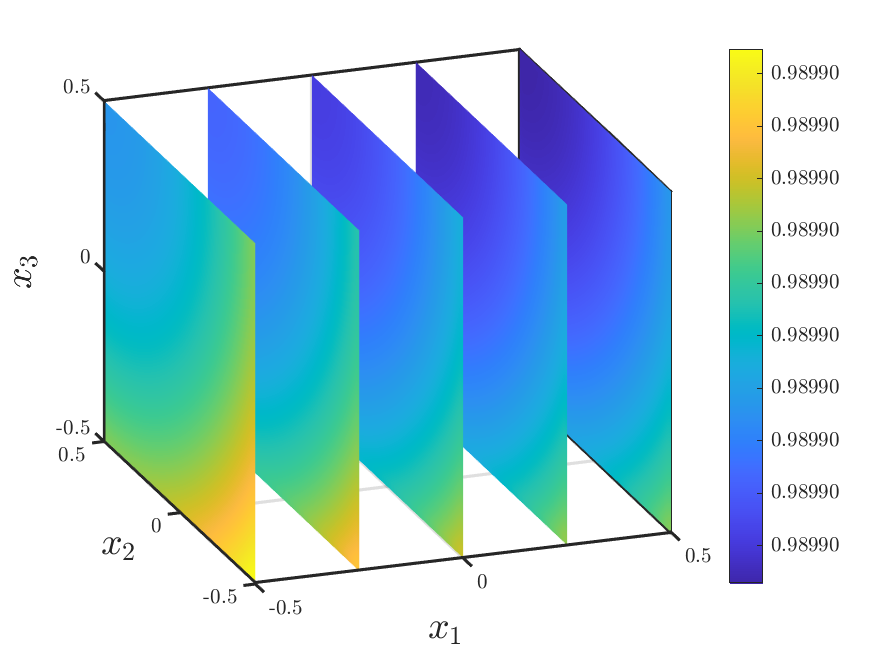}}\quad
	\subfloat[$w(x_1,x_2,x_3,t)$]{\includegraphics[width=5cm, height=4.25cm]{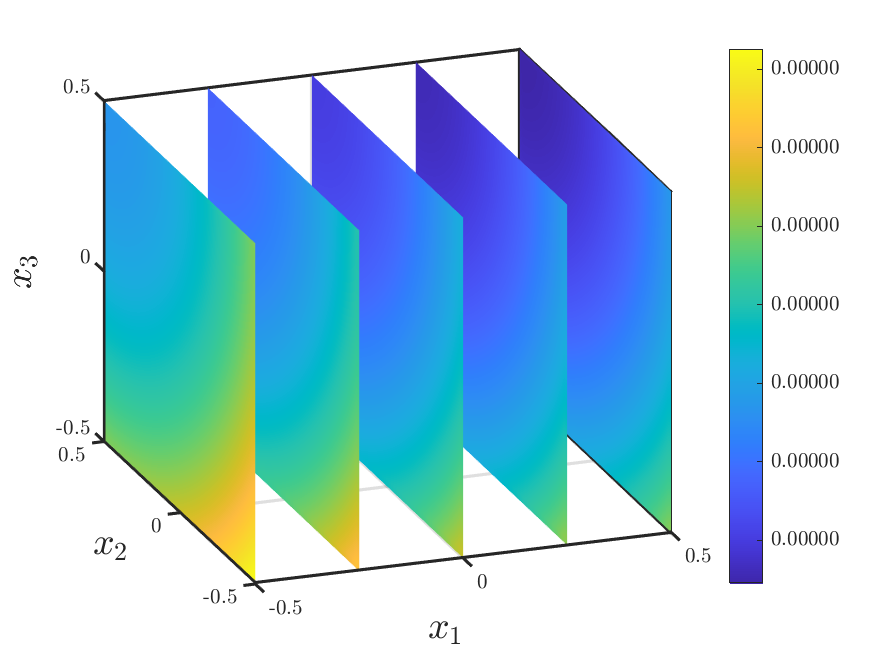}}\\
	\centering
	\subfloat[$z(x_1,x_2,x_3,t)$]{\includegraphics[width=5cm, height=4.25cm]{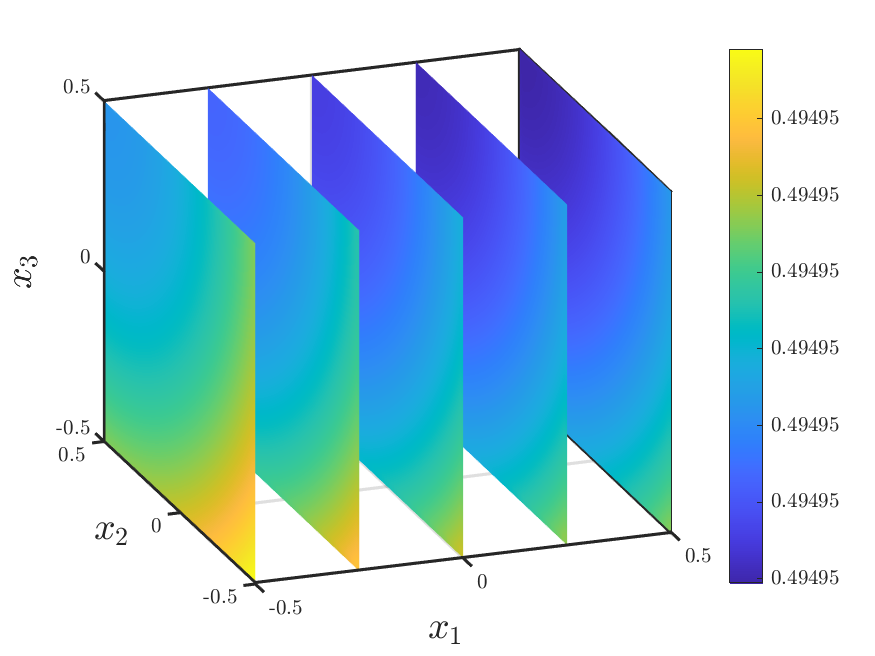}}\qquad
	\subfloat[Asymptotic behaviour]{\includegraphics[width=6cm, height=4.25cm]{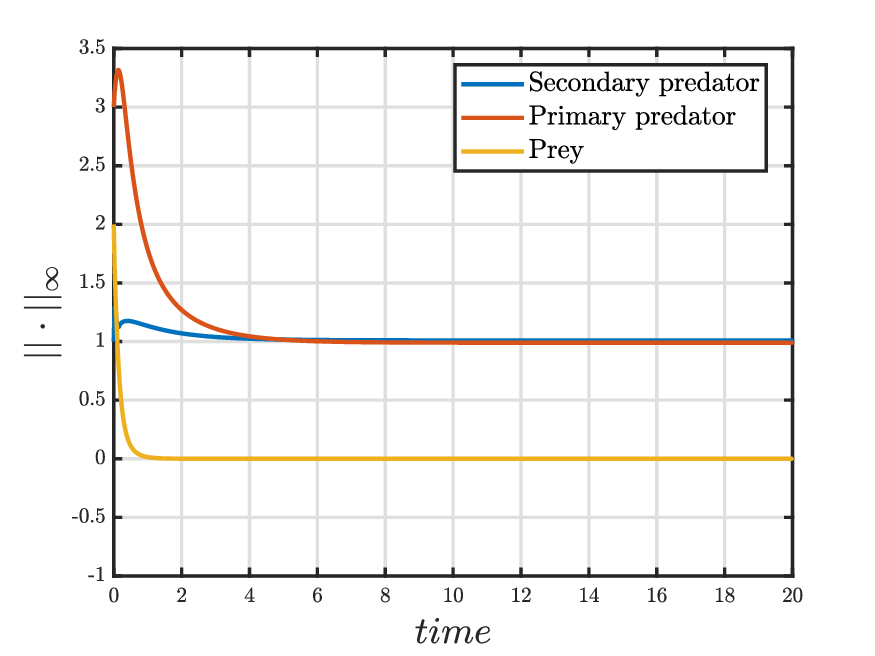}}
	\caption{Populations at $t=20$ in 3D.}
	\label{5.3.2}
\end{figure}

\begin{example}[Primary predator vanishing state]\label{5.4}
	Let $a_1=a_4=0.01, a_2=2, a_3=1.5, a_5=0.5, a_6=2$, then the conditions given in \eqref{0.1} are not satisfied and $a_5<1$. Therefore, the solution $(u, v, w, z)$ converges to the semi-coexistence (primary predator vanishing) steady state $(\uh, \vh, \wh, \zh)$ as $t\to\infty$.\vskip -0.5cm
\end{example}\vskip -0.5cm
\begin{figure}[H]
	\centering
	\subfloat[$u(x_1,x_2,t)$]{\includegraphics[width=5cm, height=4.25cm]{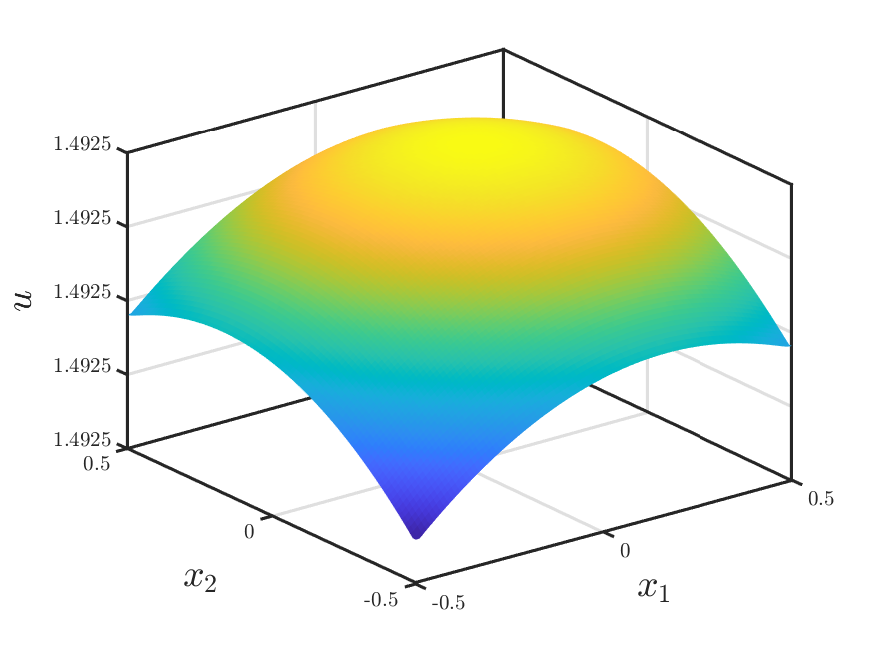}}\quad
	\subfloat[$v(x_1,x_2,t)$]{\includegraphics[width=5cm, height=4.25cm]{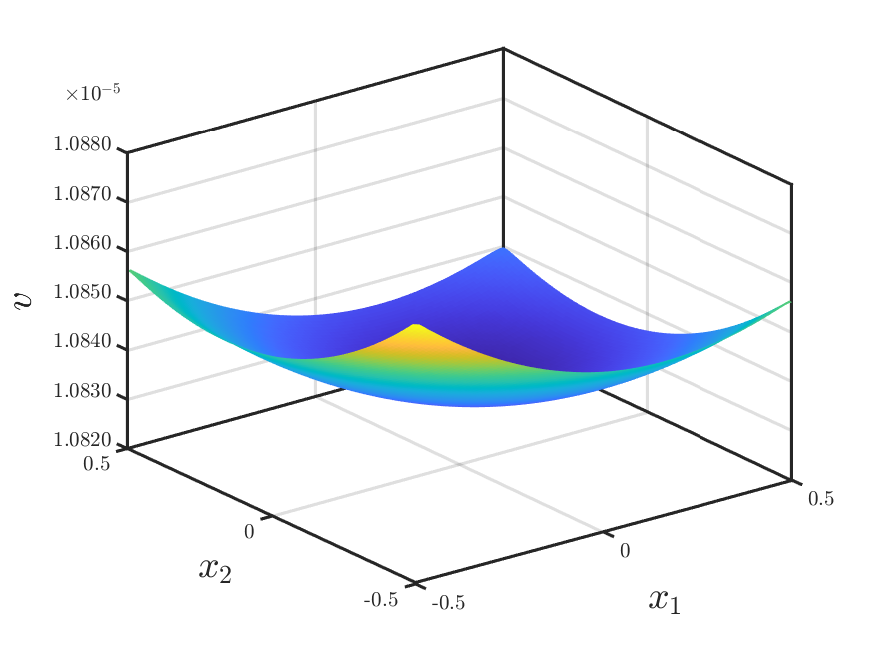}}\quad
	\subfloat[$w(x_1,x_2,t)$]{\includegraphics[width=5cm, height=4.25cm]{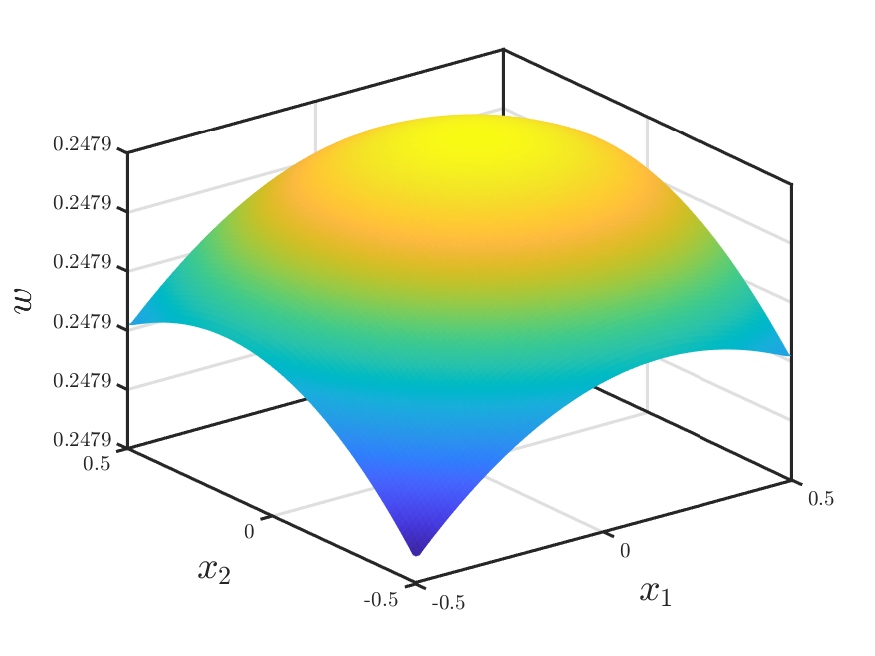}}\\
	\centering
	\subfloat[$z(x_1,x_2,t)$]{\includegraphics[width=5cm, height=4.25cm]{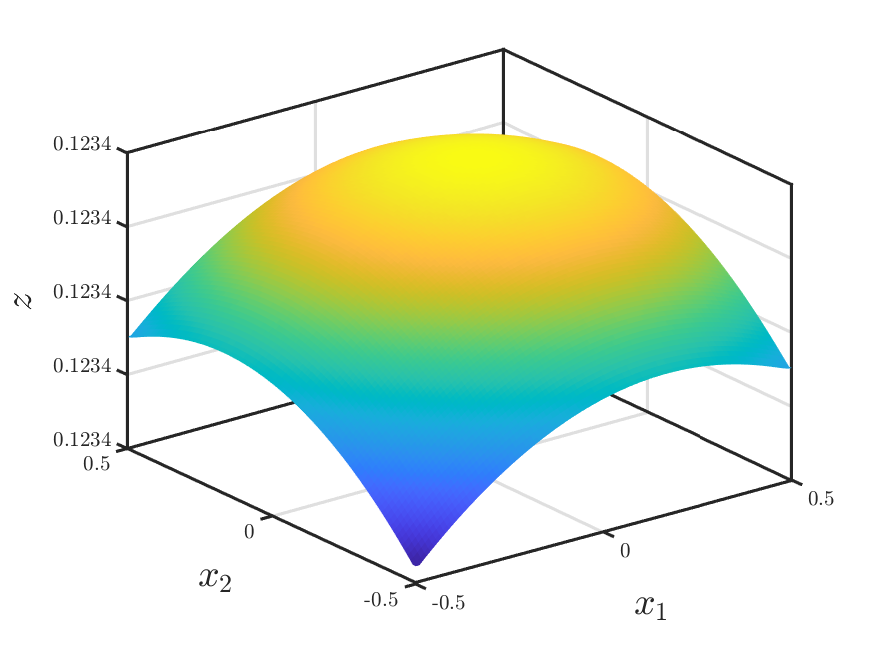}}\qquad
	\subfloat[Asymptotic behaviour]{\includegraphics[width=6cm, height=4.25cm]{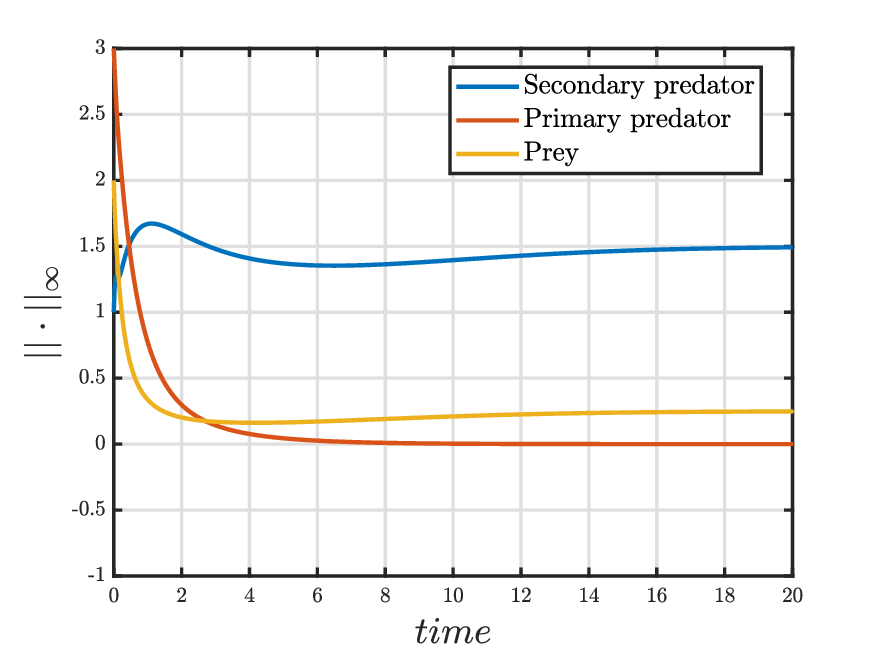}}
	\caption{Populations at $t=20$ in 2D.}
	\label{5.4.1}
\end{figure}
Here we observe that the solution $(u, v, w, z)$ of the system \eqref{1.1} converges to the primary predator vanishing state (1.499994, 0, 0.249999, 0.124999) as $t\to 20$, as shown in Figure 7. 
Figure 8 illustrates the convergence of the solution $(u, v, w, z)$, where the solution components approaches (1.499722, 0, 0.249928, 0.124943) as $t\to 20$.\vskip -0.5cm
\begin{figure}[H]
	\centering
	\subfloat[$u(x_1,x_2,x_3,t)$]{\includegraphics[width=5cm, height=4.25cm]{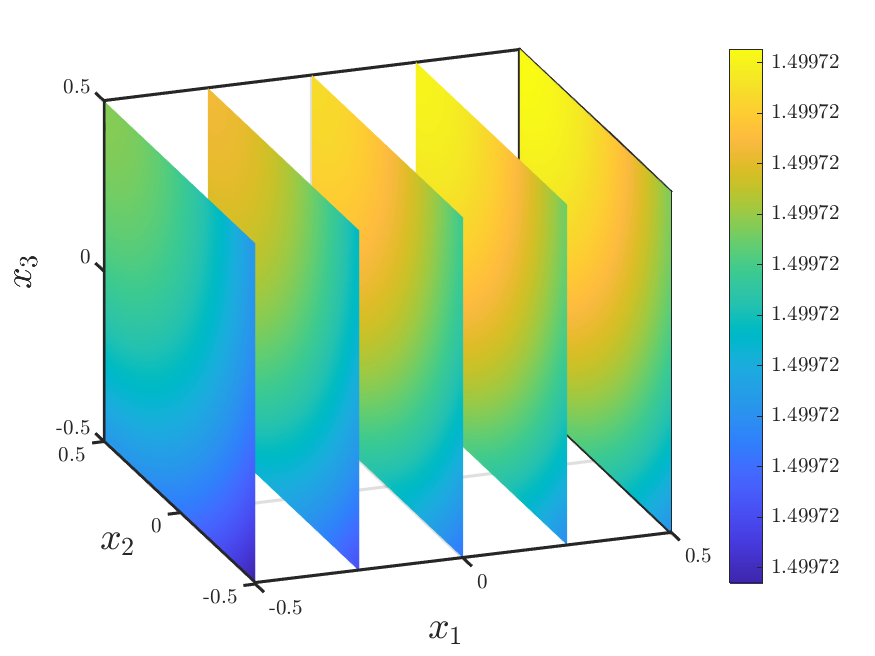}}\quad
	\subfloat[$v(x_1,x_2,x_3,t)$]{\includegraphics[width=5cm, height=4.25cm]{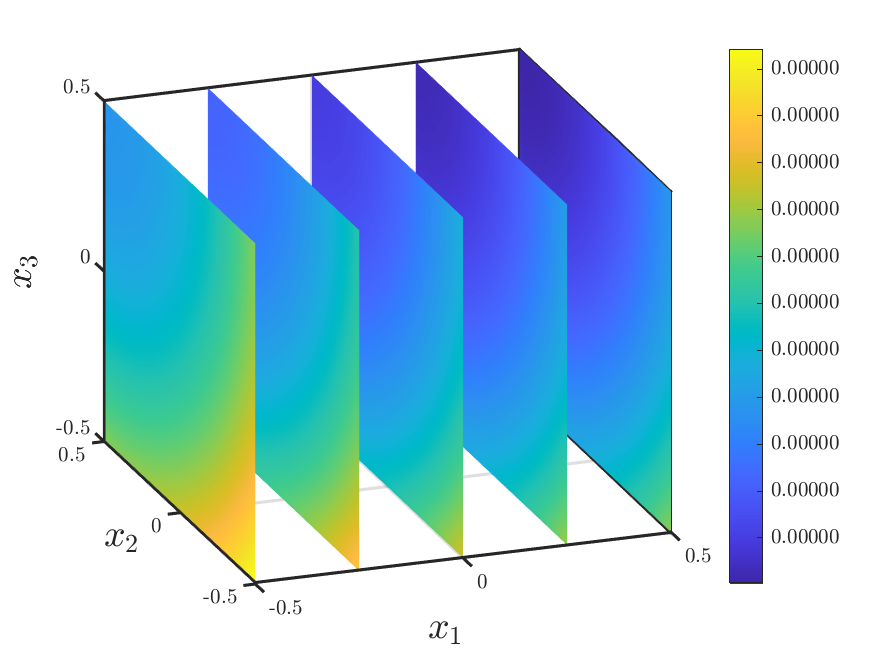}}\quad
	\subfloat[$w(x_1,x_2,x_3,t)$]{\includegraphics[width=5cm, height=4.25cm]{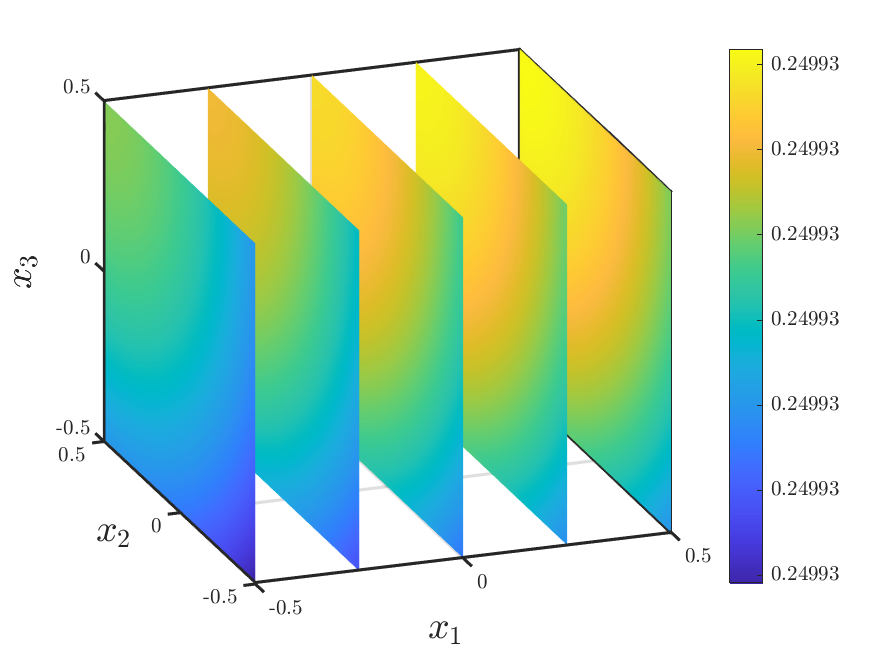}}\\
	\centering
	\subfloat[$z(x_1,x_2,x_3,t)$]{\includegraphics[width=5cm, height=4.25cm]{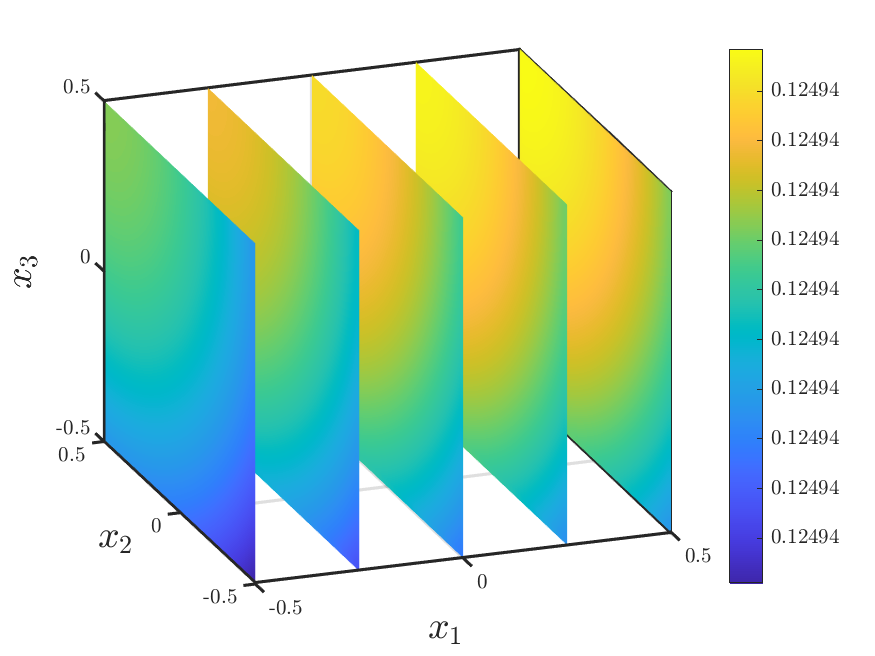}}\qquad
	\subfloat[Asymptotic behaviour]{\includegraphics[width=6cm, height=4.25cm]{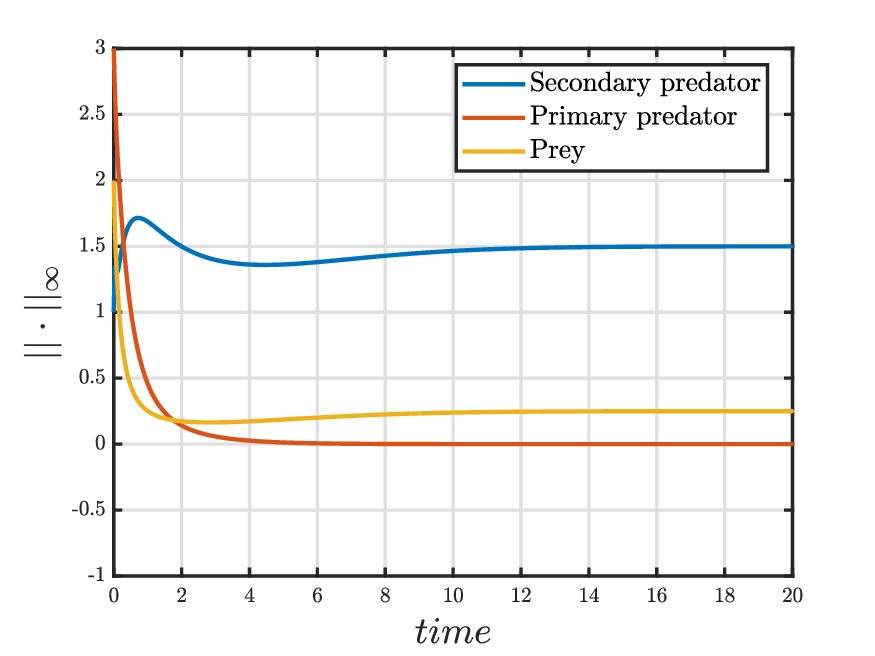}}
	\caption{Populations at $t=20$ in 3D.}
	\label{5.4.2}
\end{figure}

\begin{remark}
	When the conditions specified in Example \ref{0.1} are sufficiently strong, the solution rapidly converges to the coexistence steady state. Conversely, when the conditions in Example \ref{0.1} are not sufficiently strong, the solution quickly converges to the trivial steady state $(1, 0, 0, 0)$.
\end{remark}

\section{Conclusion}
\quad We have investigated the global existence of a classical solution to system \eqref{1.1}, which remains uniformly bounded in the $\lis$ norm for dimensions $n \geq 1$, under suitable parameter conditions. Furthermore, we established global asymptotic stability by constructing an appropriate Lyapunov functional and verified these results through numerical simulations. Our analysis yielded several important stability outcomes with meaningful biological implications. In particular, the findings underscore the decisive role of the chemotaxis coefficients in governing the existence and stability of the predator–prey alarm-taxis system.

\section*{Author contributions}
{\bf GS}: Writing – review \& editing, Writing – original draft, Validation, Methodology, Investigation, Formal analysis, Conceptualization. {\bf JS}: Writing – review \& editing, Validation, Supervision, Methodology, Investigation. {\bf RDF}: Writing – review \& editing, Validation, Methodology, Investigation, Formal analysis, Conceptualization.


\section*{Financial disclosure}
\begin{itemize}
	\item {\bf GS} and {\bf JS} thank to Anusandhan National Research Foundation (ANRF), formerly Science and Engineering Research Board (SERB), Govt. of India for their support through Core Research Grant $(CRG/2023/001483)$ during this work.
	
	\item  {\bf RDF} is member of the Gruppo Nazionale per l’Analisi Matematica, la Probabilità e le loro Applicazioni (GNAMPA) of the Istituto Nazionale di Alta Matematica (INdAM) and, acknowledges financial support by PNRR e.INS Ecosystem of Innovation for Next Generation Sardinia, CUP F53C22000430001 (codice MUR ECS0000038). He is also partially supported by the research project \lq\lq Partial Differential Equations and their role in understanding natural phenomena\rq\rq, CUP F23C25000080007, funded by Fondazione Banco di Sardegna annuality (2023).
\end{itemize}

\section*{Conflict of interest}

The authors declare no potential conflict of interests.

%

\end{document}